\title{The Power of Two Matrices in Spectral Algorithms for Community Recovery}
\author{Souvik Dhara{$^\star$}, Julia Gaudio{$^\dagger$}, Elchanan Mossel{$^\ddagger$}, Colin Sandon{$^\mid$}}
\address[{$^\star$}]{School of Industrial Engineering, Purdue University}
\address[{$^\ddagger$}]{Department of Industrial Engineering and Management Sciences, Northwestern University}
\address[{$^\dagger$}]{Department of Mathematics, Massachusetts Institute of Technology}
\address[{$\mid$}]{Department of Mathematics, {\'E}cole Polytechnique F{\'e}d{\'e}rale de Lausanne}
\email{\href{sdhara@purdue.edu}{sdhara@purdue.edu}, \href{mailto:julia.gaudio@northwestern.edu}{julia.gaudio@northwestern.edu}} 
\email{\href{mailto:elmos@mit.edu}{elmos@mit.edu}, \href{mailto:colin.sandon@epfl.ch} {colin.sandon@epfl.ch}}
\thanks{\emph{Acknowledgement:} 
The authors sincerely thank the anonymous reviewers for their helpful feedback. S.D., E.M and C.S. were partially supported by Vannevar Bush Faculty Fellowship ONR-N00014-20-1-2826. S.D.~was supported by Simons-Berkeley Research Fellowship and Vannevar Bush Faculty Fellowship ONR-N0014-21-1-2887. E.M. and C.S. were partially supported by NSF award DMS-1737944. E.M. was partially supported by Simons Investigator award (622132) and by ARO MURI W911NF1910217. J.G. was partially supported by NSF award CCF-2154100. Part of this work was completed while S.D. and C.S. were at the MIT Mathematics Department, and also while S.D. was at The Simons Institute for the Theory of Computing. J.G. thanks Nirmit Joshi for helpful discussions, and Charlie Guan for catching some errors in the proof of Lemma \ref{lem:high-disc}.
}
\begin{document}
\maketitle
\begin{abstract}
Spectral algorithms are some of the main tools in optimization and inference problems on graphs. Typically, the graph is encoded as a matrix and eigenvectors and eigenvalues of the matrix are then used to solve the given graph problem. Spectral algorithms have been successfully used for graph partitioning, hidden clique recovery and graph coloring.  In this paper, we study the power of spectral algorithms using {\em two matrices} in a graph partitioning problem. We use two different matrices resulting from two different encodings of the same graph and then combine the spectral information coming from these two matrices. 
    
We analyze a two-matrix spectral algorithm for the problem of identifying latent community structure in large random graphs. In particular, we consider the problem of recovering community assignments \emph{exactly} in the censored stochastic block model, where each edge status is revealed independently with some probability. We show that spectral algorithms based on two matrices are optimal and succeed in recovering communities up to the information theoretic threshold. Further, we show that for most choices of the parameters, any spectral algorithm based on one matrix is suboptimal. The latter observation is in contrast to our prior works (2022a, 2022b) which showed that for the {\em symmetric} Stochastic Block Model and the Planted Dense Subgraph problem, a spectral algorithm based on one matrix achieves the information theoretic threshold. We additionally provide more general geometric conditions for the (sub)-optimality of spectral algorithms.
\end{abstract}

\section{Introduction}
Spectral algorithms are some of the main tools in graph algorithms and combinatorial optimization. 
Some famous and classical examples include spectral algorithms for the hidden clique problem~\cite{AKS98}, graph bisection~\cite{Bop87}, and graph coloring~\cite{AK97,BS95}.
These algorithms encode the graph into a matrix by recording the 
status of each present/absent edge of the graph as an entry of the matrix. The most natural encoding is the adjacency matrix representation, where edges are encoded by the value $1$ and non-edges are encoded by the value $0$. Given the encoding matrix, a small number of eigenvectors for this matrix are used to solve the given graph problem.
\begin{quote}
    Our interest in this work lies in graph problems for which using multiple matrix representations gives an advantage over using a single matrix.
\end{quote}

In particular, we are interested in the power of spectral algorithms in such a scenario in the context of finding clusters in a planted partition model called the {\em Censored Stochastic Block Model (CSBM)}. 
In this model, there are two clusters of approximate sizes $n\rho$ and $n(1-\rho)$, and the edges inside each of the clusters appear independently with probabilities $p_1,p_2$ respectively, while edges between the two clusters appear with probability $q$. Moreover, each edge status is revealed with probability $t \log n/n$ for some fixed $t>0$. Thus the statuses of most edges are unknown. 
The censored model was introduced to model the fact that in many social networks, not all of the connections between individual nodes are known.

Given an instance of a censored graph with no vertex labels, the problem is to recover the partitions \emph{exactly} with high probability. This is often referred to as the \emph{exact recovery problem}.
We note that some applications of spectral algorithms to the exact recovery problem use an additional combinatorial clean-up stage (see e.g.~\cite{CO16,Vu18,YP14}), but we follow \cite{Abbe2020,Dhara2022a,Dhara2022b} in studying spectral algorithms that do not look at the graph after the top eigenvectors have been found. This is partially motivated by the fact that most real applications of spectral algorithms do not include a combinatorial clean-up stage.

The classical case in the literature considers exact recovery in the Stochastic Block Model where there is no censoring and $p_1,p_2,q = \Theta(\log n/n)$. 
In order to achieve exact recovery up to the information theoretic boundary, prior works used some trimming and post-processing steps together with the spectral algorithm~\cite{CO16,Vu18,YP14}. However, the question of whether a direct spectral algorithm based on the top two eigenvectors of the of the adjacency matrix would be optimal remained open until the recent resolution by Abbe, Fan, Wang, and Zhong~\cite{Abbe2020} for $p_1 = p_2$. 
In the \emph{censored} SBM, there are three possible observations (present, absent, or censored), so spectral recovery using a binary-valued adjacency matrix is suboptimal. Instead, one can use a ternary-valued encoding matrix.
It was recently shown in \cite{Dhara2022a,Dhara2022b} that, for some special cases of the planted partition model such as the planted dense subgraph problem $(p_2 = q)$ and the symmetric stochastic block model ($p_1= p_2, \rho = 1/2$), a spectral algorithm based on the top two eigenvectors of a signed adjacency matrix  is optimal. This raises the question:
\begin{quote}
    \emph{Are spectral algorithms based on the top eigenvectors of a signed adjacency matrix optimal for all censored stochastic block models?}
\end{quote}
The main contributions of this article are as follows: 
\begin{enumerate}
    \item In contrast with the success stories in \cite{Dhara2022a,Dhara2022b}, whenever $p_1,p_2,q$ are distinct, a spectral algorithm based on the top two eigenvectors of a signed adjacency matrix is always suboptimal (Theorem~\ref{thm:one-matrix-two-vec}~Part~\ref{thm:one-matrix-two-vec-2}).
    \item We propose spectral algorithms with two encoding matrices, where we take an appropriate linear combination of the corresponding top eigenvectors. We show that these algorithms are always optimal (Theorem~\ref{thm:two-matrix-two-vec}). The optimality of spectral algorithms with two matrices is also shown in the more general setting with $k \geq 2$ communities (Theorem~\ref{thm:two-matrix-k-communities}).
\end{enumerate}
Thus, these results exhibit a strict separation between spectral algorithm classes with one versus multiple encoding matrices, and this separation can be realized for even elementary planted partition models. To our knowledge, this  general phenomenon was not observed in the substantial prior literature for recovery problems in the planted partition problems.

\subsection{Model and Objective.}
We start by defining the Censored Stochastic Block Model. 
\begin{definition}[Censored Stochastic Block Model (CSBM)] \label{defn:CSBM}
Let $\rho\in (0,1)^k$ be such that $\sum_{i=1}^k \rho_i = 1$ and let $P\in (0,1)^{k\times k}$ be a symmetric matrix. 
Suppose we have $n$ vertices and each vertex $v\in [n]$ is assigned a community assignment $\true(v) \in [k]$ according to the distribution $\rho$ independently, i.e., $\PR(\true(v) = i) = \rho_i$ for $i\in [k]$. 
\begin{itemize}
    \item[$\triangleright$] For $u,v\in [n]$ and $u\neq v$, the edge $\{u,v\}$ exists independently with probability $P_{\true(u)\true(v)}$. Self-loops do not occur.
    \item[$\triangleright$] For every pair of vertices $\{u,v\}$, its connectivity status is {\em revealed} independently with probability $\frac{t\log n}{n}$, and is {\em censored} otherwise for some fixed $t>0$. 
\end{itemize}
The output is a random graph with edge statuses given by $\{\texttt{present},\texttt{absent}, \texttt{censored}\}$. The distribution of this random graph is called the Censored Stochastic Block Model.
We write $G \sim \CSBM_n^k(\rho, P, t)$ to denote a graph generated from the above model, with vertex labels removed (i.e., $\true$ is unknown). 
\end{definition}

\begin{definition}[Exact recovery] \label{defn:exact-recover}
Consider the $n\times k$ membership matrix $S_0$, 
where $(S_0)_{u i} = \mathbbm{1}\{\true(u) = i\}$, i.e., the $u$-th row indicates the community membership of $u$. 
Given an estimator $\hsig$, construct $\hat{S}$ similarly as $\hat{S}_{u i} = \mathbbm{1}\{\hsig(u) = i\}$. We say that an estimator achieves \emph{exact recovery} if there exists a $k\times k$ permutation matrix $J$ such that $\hat{S}J=S_0$. 
\end{definition}

\subsection{Information Theoretic Boundary.}
We start by discussing the information theoretic threshold.
The result will be stated in terms of a Chernoff--Hellinger divergence, introduced by Abbe and Sandon~\cite{Abbe2015}.  
\begin{definition}[Chernoff--Hellinger divergence]\label{def:CH-threshold}
Given two vectors $\mu, \nu \in (\R_+\setminus\{0\})^{l}$,
define
\[\CH_\xi(\mu, \nu) = \sum_{i \in[l]}  \big[\xi \mu_i  + (1-\xi) \nu_i - \mu_i^\xi \nu_i^{1-\xi}\big]\]
for $\xi \in [0,1]$. The Chernoff--Hellinger divergence of $\mu$ and $\nu$ is defined as
\begin{eq} \label{eq:CH-defn}
   \Delta_+(\mu, \nu) = \max_{\xi \in [0,1]} \CH_\xi(\mu, \nu). 
\end{eq}
\end{definition}
Define 
\begin{align}
&t_c := \Big(\min_{i\neq j} \Delta_+(\theta_i, \theta_j)\Big)^{-1} \nonumber\\
\text{where }&\theta_i = (\rho_rP_{ri}, \rho_r(1 - P_{ri}))_{r\in [k]} \in \R^{2k}. \label{eq:t_c}
\end{align} 
\begin{theorem}[Information theoretic threshold]\label{theorem:impossibility} Let $G \sim \CSBM_n^k(\rho, P, t)$. 
If $t<t_c$, then for any estimator $\hsig$, 
\begin{align*}
    \lim_{n\to\infty}\PR(\hsig \text{ achieves exact recovery}) = 0.
\end{align*}
\end{theorem}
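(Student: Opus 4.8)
The plan is to prove the impossibility result by a standard second-moment / genie-aided argument, reducing exact recovery to a local hypothesis test at a single vertex. First I would observe that if an estimator achieves exact recovery, then in particular it must correctly classify every vertex; so it suffices to show that, with probability tending to $1$, there exists at least one vertex whose community label cannot be recovered even given the true labels of all other vertices (the ``genie-aided'' estimator). Concretely, fix a vertex $v$ and condition on $\true(u)$ for all $u \neq v$. Given the observed edge statuses incident to $v$, the optimal test for $\true(v) \in \{i, j\}$ is the maximum-likelihood test, and its error probability is governed by the Chernoff information between the two product distributions describing $v$'s revealed connections under the two hypotheses.

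The key computation is to identify this error exponent. Each potential edge $\{v, u\}$ with $\true(u) = r$ is revealed with probability $\frac{t \log n}{n}$ and, when revealed, is present with probability $P_{\true(v)\, r}$. So the vector of (revealed-present, revealed-absent) counts over the $\approx n\rho_r$ neighbors in community $r$ has, under hypothesis $\true(v) = i$, expected value proportional to $\frac{t\log n}{n} \cdot n\rho_r \cdot (P_{ri}, 1 - P_{ri})$, i.e.\ proportional to $t \log n \cdot \theta_i$ with $\theta_i$ as in \eqref{eq:t_c}. A standard large-deviations estimate (Chernoff bound for sums of independent sparse Bernoulli-type variables, in the $\log n$-scaled regime) then shows that the probability that the genie misclassifies $v$ between labels $i$ and $j$ is $n^{-t\, \CH_\xi(\theta_i, \theta_j) + o(1)}$ optimized over $\xi$, hence $n^{-t\, \Delta_+(\theta_i,\theta_j) + o(1)}$. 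Taking the minimum over pairs $i \neq j$ and using $t < t_c = (\min_{i \neq j} \Delta_+(\theta_i,\theta_j))^{-1}$, this per-vertex error probability is $n^{-1 + \Omega(1)}$, which is $\omega(1/n)$.

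To finish, I would show that the number of vertices misclassified by the genie is w.h.p.\ at least one. Since there are $n$ vertices, the expected number of genie-misclassified vertices is $n \cdot n^{-1+\Omega(1)} = n^{\Omega(1)} \to \infty$. One then needs a lower bound on this count — a first-moment bound only controls the expectation from above, so instead I would use a second-moment argument: restrict attention to vertices of a fixed community pair $(i,j)$ achieving the minimizing divergence, note that the genie-decisions at distinct vertices $v, w$ depend on nearly disjoint sets of edges (only the single pair $\{v,w\}$ is shared), so the misclassification events are almost independent, giving $\mathrm{Var} \ll (\mathrm{E})^2$ and hence by Chebyshev at least one such vertex is misclassified w.h.p. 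Therefore no estimator — in particular none that could match the genie — can achieve exact recovery.

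The main obstacle I expect is the large-deviations estimate itself: one must show that the log-likelihood-ratio statistic for the sparse, $\log n$-scaled observations has error exponent \emph{exactly} $t\,\Delta_+(\theta_i,\theta_j)$, with the correct $o(1)$ error terms uniform enough to survive the union/second-moment step. This requires care because the relevant Bernoulli parameters $\frac{t\log n}{n}$ are vanishing and the tilted measures must be handled via a precise Chernoff-type expansion (e.g.\ controlling $\log \mathbb{E}[e^{\lambda X}]$ for the summands and matching it to the $\CH_\xi$ expression); concentration of the community sizes $|\{u : \true(u) = r\}|$ around $n\rho_r$ and independence of the revealing process are comparatively routine. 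An alternative that sidesteps some of this is to cite the general impossibility machinery of Abbe–Sandon~\cite{Abbe2015}, verifying only that the CSBM's per-community revealed-edge distributions reduce to the parameters $\theta_i$ above; I would likely present the self-contained version but lean on \cite{Abbe2015} for the sharpest form of the exponent.
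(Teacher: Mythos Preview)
Your proposal is correct and follows the classical genie-aided route (as in Abbe--Sandon~\cite{Abbe2015}): compute the per-vertex misclassification exponent $t\,\Delta_+(\theta_i,\theta_j)$ via a Chernoff bound, observe that for $t<t_c$ this exponent is below $1$, and then run a second-moment argument over near-independent vertices to conclude that at least one vertex is genie-misclassified with high probability, whence the MAP (and hence any) estimator fails.

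The paper takes a genuinely different route. Rather than working with likelihood ratios and Chernoff exponents directly, it reuses the dissonance-range machinery built in Section~\ref{sec:geometric}. For $t<t_c$ the sets $\DR_{1/t}(i)$ and $\DR_{1/t}(j)$ overlap on an open region (Lemma~\ref{DRoverlapThreshold}), and Lemma~\ref{DRmeaning}\eqref{DRmeaning-1} then produces $L_n\to\infty$ vertices in community~$i$ and $L_n$ vertices in community~$j$ all sharing a common integer degree profile $d$. Swapping any such pair yields an equiprobable labeling, so the MAP argmax set has size $\geq L_n$, and MAP with uniform tie-breaking fails with probability $1-1/L_n\to 1$. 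This avoids both the explicit Chernoff computation and the second-moment step: the variance control you would need is effectively absorbed into Lemma~\ref{DRmeaning}, where the near-independence is engineered by restricting to a sparse set $S_i'$ with all internal edges censored. Your approach is more portable and self-contained; the paper's is shorter within its own framework because the heavy lifting (Poisson approximation, convexity of $\eta_i$, identification of the tangent point $x^\star$) has already been done for the spectral analysis and is simply reused here.
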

\subsection{Spectral Algorithms.} 
For comparing the performance of spectral algorithms with one matrix versus spectral algorithms  with more than one matrix, we first specialize to the case of two communities. 

To define spectral algorithms formally, we first define the threshold procedures we allow to apply on vectors. These are the procedures that will be applied to the leading eigenvectors of the encoding matrices. 

\begin{breakablealgorithm}
\caption{\classify}\label{alg:classify}
\begin{algorithmic}[1]
\Require{Censored graph $G$ on $n$ vertices, vectors $(u_i)_{i=1}^m \subset \R^n$,
and scalars, $a_1,\dots,a_m,T \in \R$.
}
\Ensure{Community classification.}
\vspace{0.2cm}
   \State Compute possible score vectors 
   \[U  = \left\{\sum_{i = 1}^m s_i a_i u_i \text{ for all } s_1,...,s_m\in \{\pm 1\}\right\}.\]
    \State Compute possible assignments $\hat{S}(U) = \{ \hat{\sigma} = \sgn(u - T) : u \in U  \}$  and output a community assignment  
    $1+ (1+\hat{\sigma})/2$ that maximizes the posterior probability $\mathbb{P}( \hat{\sigma} \mid G)$ over $\hat{\sigma} \in \hat{S}(U)$. 
\end{algorithmic}
\end{breakablealgorithm}

Since eigenvectors are determined up to a sign flip, Step 2 above is required in order to resolve this sign ambiguity. This will be explained in more detail in Remark~\ref{rem:sign-amb}.
\begin{definition}[Signed adjacency matrix]\label{def:signed-adj}
Given $y>0$ and a graph $G$ with edge statuses $\{\texttt{present}$, $\texttt{absent}$, $\texttt{censored}\}$, define the signed adjacency matrix $A(G,y)$ as the $n\times n$ matrix with
\begin{align*}
A_{ij} &= \begin{cases}
1 &\text{if } \{i,j\} \text{ is present}\\
-y &\text{if } \{i,j\} \text{ is absent}\\
0 &\text{if } \{i,j\} \text{ is censored}.
\end{cases}
\end{align*}
\end{definition}

Let us define the class of algorithms \specone~that use a single encoding matrix.
\begin{definition}[\specone]
An algorithm $\mathcal{A}(G,y,a_1,a_2, T)$ in the \specone~class takes a censored graph $G$ as input, an encoding parameter $y\in \R_+$, and scalars $a_1,a_2,T \in \R$.
The algorithm then computes the top two eigenvectors $u_1,u_2$ of $A = A(G,y)$, and gives the output of $\classify((u_i)_{i=1}^2, (a_i)_{i=1}^2, T)$.
We denote the output of algorithm $\cA$ in this class as $\hsig_{\sss \cA}$.
\end{definition}

For the two community case, we will always consider the parameters:
\begin{equation} \label{eq:two-com-params}
P = \begin{pmatrix} p_1 & q \\q & p_2 \end{pmatrix}, \bar{\rho} = (\rho,1-\rho), \text{ and }  \rho,p_1,p_2,q \in (0,1).
\end{equation}

\begin{theorem}[Failure of \specone~in most cases]\label{thm:one-matrix-two-vec}
Let $G \sim \CSBM_n^2(\bar{\rho}, P, t)$ with $\bar{\rho}, P$ given by \eqref{eq:two-com-params}. 
\begin{enumerate}[(1)]
    \item \label{thm:one-matrix-two-vec-1} 
    Suppose that $p_1,p_2,q$ are not distinct. If $p_1 = p_2 = p$, 
    then assume $p + q \neq 1$. \footnote{The case $p_1 = p_2 = p$, $\rho = \frac{1}{2}$ is covered in \cite{Dhara2022a} without the assumption $p+q \neq 1$. In this case, spectral algorithms succeed for $t > t_c$.} There exist explicitly computable constants $y\in \R_+$ and $\gamma_1,\gamma_2\in \R$ such that the algorithm  $\cA= \mathcal{A}(G,y,\gamma_1,\gamma_2, 0)$ from the class $\specone$ satisfies 
    $$\lim_{n\to\infty}\PR(\hsig_{\sss \cA} \text{ achieves exact recovery}) = 1,$$ 
    for any $t > t_c$.
    In particular, Algorithm \ref{alg:one-matrix} produces such an estimator.
    \item \label{thm:one-matrix-two-vec-2} Suppose that $p_1,p_2,q$ are distinct. There exists $\delta_0>0$ such that, if $t < t_c + \delta_0$, then for any $\cA\in \specone$, 
    $$\lim_{n\to\infty}\PR(\hsig_{\sss \cA} \text{ achieves exact recovery}) =  0.$$
\end{enumerate}
\end{theorem}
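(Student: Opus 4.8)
\emph{Overview and Step 1 (reduction to a local linear test).} The plan is to reduce every algorithm in $\specone$ to a thresholded \emph{local} test on the edge counts at a single vertex, and then to exploit that a single encoding parameter $y$ rigidly couples the weights placed on present and absent edges; a compactness argument will upgrade ``suboptimal for every fixed parameter choice'' to a uniform failure window $[t_c,t_c+\delta_0)$. Concretely, fix $\cA=\cA(G,y,a_1,a_2,T)\in\specone$, let $A=A(G,y)$ have top eigenpairs $(\lambda_1,u_1),(\lambda_2,u_2)$, and write $\mathbb{E}[A]=\tfrac{t\log n}{n}M$, so that $M$ is constant on the four community blocks with block values $\mu_{ij}=P_{ij}-y(1-P_{ij})$ and its top eigenvectors $v_1^\star,v_2^\star$ are block-constant, equal to $(v_k^\star)_i$ on community $i$. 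By the entrywise (``leave-one-out'') eigenvector analysis of \cite{Abbe2020}, adapted to the signed/censored setting in \cite{Dhara2022b}, each $u_k$ equals $\pm\lambda_k^{-1}Av_k^\star$ up to an entrywise error of lower order than the main term. Writing $d_i^+(v),d_i^-(v)$ for the numbers of present, resp.\ absent, edges from $v$ to community $i$, this yields
\[
(a_1u_1+a_2u_2)(v)\;=\;C_1^\star\big(d_1^+(v)-y\,d_1^-(v)\big)\;+\;C_2^\star\big(d_2^+(v)-y\,d_2^-(v)\big)\;+\;\mathrm{err}(v),
\]
with $C_i^\star=a_1\lambda_1^{-1}(v_1^\star)_i+a_2\lambda_2^{-1}(v_2^\star)_i$ and $\max_v|\mathrm{err}(v)|=o(n^{-1/2})$. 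Hence the decision of $\cA$ at $v$ coincides, up to this error and the finitely many sign flips built into $\classify$, with a thresholded linear test $\langle\beta,(d_1^+,d_1^-,d_2^+,d_2^-)(v)\rangle\gtrless T$ whose coefficient vector $\beta=(C_1^\star,-yC_1^\star,C_2^\star,-yC_2^\star)$ lies on the compact set
\[
K\;=\;\overline{\big\{\,[\,1:-y:c:-cy\,]\in\mathbb{RP}^3 \,:\, y\in(0,\infty),\ c\in\mathbb{R}\,\big\}}.
\]
The key structural constraint is that, within each community, present- and absent-edge weights are in the fixed ratio $1:(-y)$, with the \emph{same} $y$ for both communities.

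\emph{Step 2 (threshold of a local linear test).} Conditioned on the labels, $\big(d_1^+(v),d_1^-(v),d_2^+(v),d_2^-(v)\big)$ is, for a community-$i$ vertex, asymptotically Poisson with mean $t\log n\cdot\theta_i$, with $\theta_i$ as in \eqref{eq:t_c}. By Cram\'er's theorem the probability that a half-space test $\langle\beta,\cdot\rangle\gtrless T$ places a community-$i$ vertex on the community-$j$ side is $n^{-t\,I_{ij}(\beta,T)+o(1)}$ for an explicit rate $I_{ij}$; by a first- and second-moment argument over the $n$ vertices (as in the proof of Theorem~\ref{theorem:impossibility} and its analogues in \cite{Abbe2020,Dhara2022b}), the test recovers all communities with high probability iff $t\cdot\min_{i\ne j}I_{ij}(\beta,T)>1$, and otherwise misclassifies $n^{1-t\min_{ij}I_{ij}+o(1)}\to\infty$ vertices, a constant fraction of them by a margin of order $n^{-1/2}$ in the eigenvector scale. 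By the analysis underlying the Chernoff--Hellinger threshold \cite{Abbe2015}, together with standard convex duality for large deviations of Poisson vectors, the likelihood-ratio direction $\beta^{\mathrm{opt}}=\big(\log\tfrac{p_1}{q},\,\log\tfrac{1-p_1}{1-q},\,\log\tfrac{q}{p_2},\,\log\tfrac{1-q}{1-p_2}\big)$, with threshold correctly placed, is the \emph{unique} direction (up to positive scaling) attaining $\max_{T}\min_{i\ne j}I_{ij}=\Delta_+(\theta_1,\theta_2)=t_c^{-1}$; every half-space test whose normal is not a positive multiple of $\beta^{\mathrm{opt}}$ has strictly smaller rate.

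\emph{Step 3 (obstruction, uniformity, conclusion).} It therefore suffices to prove $R_{\max}:=\sup\{\min_{i\ne j}I_{ij}(\beta,T):\beta\in K,\ T\in\mathbb{R}\}<\Delta_+(\theta_1,\theta_2)$, i.e.\ (by Step~2) that $\beta^{\mathrm{opt}}\notin K$; the supremum is attained by compactness of $K$ (and of the effective range of $T$) and continuity of the rate, so the theorem follows with $\delta_0:=R_{\max}^{-1}-t_c>0$. When $p_1,p_2,q$ are pairwise distinct, all four coordinates of $\beta^{\mathrm{opt}}$ are nonzero, so $\beta^{\mathrm{opt}}$ lies on no boundary face of $K$; and membership in the relative interior of $K$ would force $\tfrac{\log((1-p_1)/(1-q))}{\log(p_1/q)}=\tfrac{\log((1-q)/(1-p_2))}{\log(q/p_2)}$, i.e.\ $\varphi(p_1)=\varphi(p_2)$ for $\varphi(x):=\log\tfrac{1-x}{1-q}\big/\log\tfrac{x}{q}$, which is impossible because $\varphi$ is strictly monotone on $(0,1)\setminus\{q\}$ (a derivative computation) while $p_1\ne p_2$. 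Hence $R_{\max}<\Delta_+$. Finally, for $t<t_c+\delta_0$, the linear test attached to $\cA$ — and each of the finitely many sign-flipped variants examined by $\classify$, all of which have coefficient vectors in $K$, and in particular the one selected by the posterior-maximization step — misclassifies $\to\infty$ vertices, at least one by a margin of order $n^{-1/2}$ and therefore exceeding $\max_v|\mathrm{err}(v)|=o(n^{-1/2})$; that vertex is misclassified by $\cA$ itself, so $\hsig_{\sss\cA}$ does not achieve exact recovery.

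\emph{Main obstacle.} The crux is the uniformity in Step~3: showing each fixed parameter choice is suboptimal is routine once $\varphi$ is shown monotone, but one must bound the rate over the whole non-compact family of algorithms simultaneously, which forces one to compactify the parameter set and check that the degenerate limits — discarding all absent edges, all present edges, or all edges to one community — remain strictly below $\Delta_+$. This last verification is precisely where the hypothesis that $q$ too is distinct from $p_1$ and $p_2$ is used: it keeps every coordinate of $\beta^{\mathrm{opt}}$ nonzero, so no degenerate test can be optimal. A secondary point is confirming, via the imported entrywise eigenvector bounds, that $\mathrm{err}(v)$ is genuinely $o(n^{-1/2})$ and hence too small to rescue the $\Theta(n^{-1/2})$-margin misclassifications of the linear test.
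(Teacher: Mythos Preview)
Your proposal addresses only Part~(2); Part~(1) --- showing that a suitable \specone\ algorithm succeeds when $p_1,p_2,q$ are not all distinct --- is entirely absent. For that part the paper gives an explicit construction (Algorithm~\ref{alg:one-matrix} with $y=y(p,q)$ and weights from Algorithm~\ref{alg:weights-1}), verifies via Lemma~\ref{edgeEigenvalue} that this $y$ yields distinct nonzero eigenvalues for $\mathbb{E}[A]$, and checks that in precisely these non-generic cases the optimal direction $w^\star$ of \eqref{w-star-two-com} \emph{does} satisfy your ratio constraint, so that weights $(a_1,a_2)$ can be chosen making the linear test match $\langle w^\star,d(\cdot)\rangle$; Proposition~\ref{prop:seperation-verification}~\ref{prop:seperation-verification-1} then gives separation. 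Your Step~1 framework would accommodate this, but you never carry it out.

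For Part~(2) your argument is essentially the paper's: both reduce via the entrywise eigenvector approximation to a linear test whose coefficient vector must have the same present/absent ratio $1:(-y)$ in both community blocks, and both rule out $w^\star$ from this set by the strict monotonicity of $p\mapsto y(p,q)$ (your $\varphi$ is $-y(\cdot,q)$). The paper packages the large-deviations step as the dissonance-range geometry (Lemma~\ref{DRoverlapThreshold} and Proposition~\ref{prop:seperation-verification}~\ref{prop:seperation-verification-2}), whereas you invoke Cram\'er's theorem and uniqueness of the Chernoff--Hellinger optimizer directly; these are equivalent. Your compactification of $K\subset\mathbb{RP}^3$ to obtain a single $\delta_0$ valid for all $y>0$ makes explicit a point the paper treats tersely (Proposition~\ref{prop:seperation-verification}~\ref{prop:seperation-verification-2} as stated yields a $\mu$ depending on the subspace $\mathcal{Z}(y)$, which varies with $y$); the cleanest way to finish your version is to note that $\overline{K}$ lies in the closed quadric $\{\beta_1\beta_4=\beta_2\beta_3\}$, and $w^\star$ is off that quadric exactly when $y(p_1,q)\ne y(p_2,q)$, so no separate boundary analysis is needed.
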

For the case $p_1= p_2$,  Theorem~\ref{thm:one-matrix-two-vec}~Part~\ref{thm:one-matrix-two-vec-1} generalizes the result of \cite[Theorem 2.2]{Dhara2022a} to the case $\rho\neq 1/2$. Part~\ref{thm:one-matrix-two-vec-2} of the result is in sharp contrast with the results in \cite{Dhara2022a,Dhara2022b}; together, these results essentially say that the censored planted dense subgraph problem ($p_2 = q$) and the symmetric censored stochastic block models ($p_1 = p_2$) are remarkably the only cases where an algorithm from \specone~is successful\footnote{For the edge-case $p_1 = p_2 = p$ and $p + q = 1$, the rank of $\E[A] $ is 1 for the value of $y$ that we would want to use. This is why it is ruled out in Theorem~\ref{thm:one-matrix-two-vec}~Part~\ref{thm:one-matrix-two-vec-1}.
}. 
The possible limitation of \specone~was shown in \cite[Theorem 2.6]{Dhara2022b} for the special case of $q = 1/2$, $p_1 = 1-p_2$ and $\rho = 1/2$.
\begin{remark} \normalfont It is worthwhile to note that the choice of encoding parameters $\{1,-y,0\}$ is completely general and one does not get a more powerful class of algorithms by allowing an arbitrary ternary encoding. 
In fact, as our proof shows, if $p_1,p_2,q$ are distinct, then even if one allows arbitrary encodings, the \specone~algorithms still fail sufficiently near the threshold (see Remark~\ref{rem:arbitrary-encoding}). 
\end{remark}

Next, we will show that spectral algorithms with two matrices are always optimal for the recovery of two communities. 
Let us define the class of algorithms \spectwo~that uses two encoding matrices instead of one. 

\begin{definition}[\spectwo]
An algorithm $\mathcal{A}(G,y_1,y_2,(a_i)_{i=1}^4, T)$ in the \spectwo~class takes as input a censored graph $G$, two encoding parameter $y_1,y_2\in \R_+$ with $y_1\neq y_2$ and $(a_i)_{i=1}^4 \subset \R$,  $T \in \R$.
The algorithm considers two signed adjacency matrices $A_1 = A(G,y_1)$ and $A_2 = A(G,y_2)$, and computes their top two eigenvectors $u_1^r,u_2^r$, for $r = 1,2$. 
Then the algorithm outputs $\classify((u_i^r)_{i, r = 1,2}, (a_i)_{i=1}^{4}, T)$.
As before, we denote the output of algorithm $\cA$ from this class as $\hsig_{\sss \cA}$.
\end{definition}

\begin{theorem}[\spectwo~always succeeds in recovering two communities]\label{thm:two-matrix-two-vec}
Let $G \sim \CSBM_n^2(\bar{\rho}, P, t)$ with $\bar{\rho}, P$ given by~\eqref{eq:two-com-params}.
There exists a set $\mathcal{Y} \subset \R_+$ with $|\mathcal{Y}|\leq 3$ such that for any $y_1\neq y_2$ and $y_1,y_2 \notin \mathcal{Y}$, there exist explicit $(a_i)_{i = 1}^4 \subset \R^4$ such that the algorithm  $\mathcal{A}(G,y_1,y_2,(a_i)_{i=1}^4, 0)$ from the class $\spectwo$ satisfies
 $$\lim_{n\to\infty}\PR(\hsig_{\sss \cA} \text{ achieves exact recovery}) = 1,$$
 for any $t > t_c$.
In particular, Algorithm \ref{alg:two-matrix-a} produces such an estimator.
\end{theorem}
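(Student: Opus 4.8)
The plan is to use the two encoding matrices to synthesize, out of the four leading eigenvectors, a score vector that agrees up to negligible error with the genie-aided log-likelihood-ratio statistic of each vertex, and then to run the standard Chernoff estimate certifying exact recovery for $t>t_c$ (we may assume $p_1,p_2,q$ are not all equal, as otherwise $t_c=\infty$ and the statement is vacuous). Turning to the details: for $y>0$ put $f_y(x)=(1+y)x-y$ and $B_y=\left(\begin{smallmatrix}f_y(p_1)&f_y(q)\\ f_y(q)&f_y(p_2)\end{smallmatrix}\right)$; a short edge-probability computation gives $\E[A(G,y)]=\tfrac{t\log n}{n}S_0B_yS_0^{\top}+E_y$ with $E_y$ diagonal of operator norm $O(\log n/n)$. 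Since $p_1,p_2,q$ are not all equal, $y\mapsto\det B_y$ is a nonzero polynomial of degree $\le 2$, hence has at most two roots; let $\mathcal Y\subset\R_+$ consist of those roots together with at most one further value, so that $|\mathcal Y|\le 3$ and, for $y\notin\mathcal Y$, $B_y$ has two nonzero eigenvalues bounded away from $0$ with the spectral gap needed below. For $y\notin\mathcal Y$, $\tfrac{t\log n}{n}S_0B_yS_0^{\top}$ has rank exactly $2$, with nonzero eigenvalues $\lambda_k(y)=t\log n\,\mu_k(y)(1+o(1))$ (the $\mu_k(y)\ne0$ being the eigenvalues of the explicit matrix $\mathrm{diag}(\bar\rho)^{1/2}B_y\,\mathrm{diag}(\bar\rho)^{1/2}$) and eigenvectors $\bar u_1(y),\bar u_2(y)$ that are explicit community-constant unit vectors forming a basis of the two-dimensional space $V$ of community-constant vectors. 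The technical input, lifting the entrywise eigenvector analysis of \cite{Abbe2020,Dhara2022a,Dhara2022b} to two matrices at once, is that for $y_r\notin\mathcal Y$ the two eigenvectors $u^r_1,u^r_2$ of $A(G,y_r)$ of largest eigenvalue modulus have eigenvalues within $o(\log n)$ of $\lambda_1(y_r),\lambda_2(y_r)$, separated from the $O(\sqrt{\log n})$ bulk, and obey $u^r_k(i)=\lambda_k(y_r)^{-1}\bigl(A(G,y_r)\bar u_k(y_r)\bigr)(i)+\mathrm{err}^r_k(i)$ with $\max_i|\mathrm{err}^r_k(i)|=o(1/\sqrt n)$.

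\emph{Choice of coefficients.} Summing, any combination $z:=\sum_{r=1}^{2}\sum_{k=1}^{2}a_{(r,k)}u^r_k$ obeys $z(i)=\sum_{r=1}^{2}\bigl(A(G,y_r)v^r\bigr)(i)+o(\log n)$ uniformly in $i$, where $v^r:=\sum_k a_{(r,k)}\lambda_k(y_r)^{-1}\bar u_k(y_r)\in V$; and since $\{\bar u_1(y_r),\bar u_2(y_r)\}$ is a basis of $V$, the pair $(a_{(r,1)},a_{(r,2)})$ can realise any prescribed $v^r\in V$. Writing $v^r_b$ for the value of $v^r$ on community $b$, one has $\sum_r\bigl(A(G,y_r)v^r\bigr)(i)=\sum_{j\ne i}\Lambda(\mathrm{status}(i,j),\true(j))$ for the edge-labelling $\Lambda(\texttt{censored},b)=0$, $\Lambda(\texttt{present},b)=v^1_b+v^2_b$, $\Lambda(\texttt{absent},b)=-y_1v^1_b-y_2v^2_b$. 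Because $y_1\ne y_2$, for each $b$ the system $v^1_b+v^2_b=\log(P_{1b}/P_{2b})$, $-y_1v^1_b-y_2v^2_b=\log\bigl((1-P_{1b})/(1-P_{2b})\bigr)$ has a unique solution, and the resulting $v^1,v^2$ pin down $(a_i)_{i=1}^{4}$ explicitly, up to an overall positive scaling irrelevant to thresholding at $0$; with this choice $\Lambda$ equals the genie log-likelihood ratio $\Lambda(s,b)=\log\frac{\PR(\mathrm{status}(i,j)=s\mid\true(i)=1,\true(j)=b)}{\PR(\mathrm{status}(i,j)=s\mid\true(i)=2,\true(j)=b)}$. (This is precisely the move unavailable to $\specone$, confined to $\Lambda(\texttt{absent},b)=-y\,\Lambda(\texttt{present},b)$ with a single $y$, whereas when $p_1,p_2,q$ are distinct the ratio $\Lambda(\texttt{absent},b)/\Lambda(\texttt{present},b)$ of the log-likelihood ratio genuinely depends on $b$.)

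\emph{Chernoff bound and selection.} Now $z(i)=\tilde z_i+o(\log n)$ uniformly, where $\tilde z_i:=\sum_{j\ne i}\Lambda(\mathrm{status}(i,j),\true(j))$ is a sum of independent terms with $\E[\tilde z_i\mid\true(i)=1]=(1+o(1))\,t\log n\sum_b\bar\rho_b\,\mathrm{KL}(\mathrm{Bern}(P_{1b})\,\|\,\mathrm{Bern}(P_{2b}))>0$ and $\E[\tilde z_i\mid\true(i)=2]=-(1+o(1))\,t\log n\sum_b\bar\rho_b\,\mathrm{KL}(\mathrm{Bern}(P_{2b})\,\|\,\mathrm{Bern}(P_{1b}))<0$, so thresholding at $0$ separates the two conditional means by $\Theta(\log n)$, which dominates the $o(\log n)$ errors. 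For $\true(i)=1$ and any $\xi\in(0,1)$, an exponential Markov bound gives $\PR(\tilde z_i\le o(\log n)\mid\true(i)=1)\le e^{o(\log n)}\,\E[e^{-\xi\tilde z_i}\mid\true(i)=1]=n^{-t\,\CH_{1-\xi}(\theta_1,\theta_2)+o(1)}$, which upon minimizing over $\xi$ becomes $n^{-t\,\Delta_+(\theta_1,\theta_2)+o(1)}=n^{-t/t_c+o(1)}$ by \eqref{eq:t_c}; the case $\true(i)=2$ is symmetric. As $t>t_c$ strictly, this is $n^{-1-\Omega(1)}$, so a union bound over the $n$ vertices shows $\sgn(z)$ recovers the partition with probability $1-o(1)$. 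Finally $\classify$ outputs the element of $\hat S(U)$ maximizing $\PR(G\mid\cdot)$; one of its sign choices yields the above $z$ and hence recovers $\true$, while for $t>t_c$, with probability $1-o(1)$, no assignment differing from $\true$ on even one vertex has larger likelihood (a direct likelihood-ratio concentration estimate, applied to the finitely many candidates of $\hat S(U)$), so $\classify$ outputs $\true$ up to the relabelling allowed by Definition~\ref{defn:exact-recover}.

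\emph{Main obstacle.} The crux is the entrywise ($\ell_{2\to\infty}$) eigenvector bound $u^r_k(i)\approx\lambda_k(y_r)^{-1}(A(G,y_r)\bar u_k(y_r))(i)$, uniformly over all $n$ vertices in the logarithmic-degree regime: one must decouple each eigenvector from individual rows of $A(G,y_r)$ (a leave-one-out argument), control the separation of the two signal eigenvalues from the $O(\sqrt{\log n})$ bulk, and handle vertices of maximal degree $\Theta(\log n)$ — the same difficulties resolved in \cite{Abbe2020,Dhara2022a,Dhara2022b}, here needed simultaneously for the two values of $y$. The remaining ingredients — the algebra of $B_y$, the $2\times 2$ system for the coefficients, and the Chernoff–Hellinger identity $\max_\xi\CH_\xi(\theta_1,\theta_2)=\Delta_+(\theta_1,\theta_2)=t_c^{-1}$ — are routine.
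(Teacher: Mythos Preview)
Your proposal is correct and shares the paper's overall architecture: entrywise eigenvector approximation via the \cite{Abbe2020} framework (the paper's Corollary~\ref{corollary:entrywise}), then an explicit $2\times 2$ linear system in the coefficients so that the weighted combination of eigenvectors synthesizes the genie log-likelihood-ratio statistic $\langle w^\star, d(v)\rangle$ (the paper's Algorithm~\ref{alg:weights-2} and system~\eqref{coeffs-2matrix}). The difference lies in how the separation of $\langle w^\star, d(v)\rangle$ across communities is established. You run a direct Chernoff computation on $\tilde z_i$, reducing to the identity $\max_\xi\CH_\xi(\theta_1,\theta_2)=1/t_c$ and a union bound over vertices; the paper instead invokes its geometric machinery (Proposition~\ref{prop:seperation-verification}~\ref{prop:seperation-verification-1}), built in Section~\ref{sec:geometric} for the dual purpose of also proving impossibility and the failure of \specone. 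Your Chernoff route is more elementary and self-contained for this one theorem; the paper's route amortizes infrastructure that is needed elsewhere anyway.

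One place where your exposition is looser than the paper's is the final sign-resolution step. You assert that ``no assignment differing from $\true$ on even one vertex has larger likelihood'' via ``a direct likelihood-ratio concentration estimate, applied to the finitely many candidates of $\hat S(U)$.'' But those candidates are \emph{random} (they depend on $G$ through the eigenvectors), so a fixed-alternative likelihood-ratio bound does not apply to them directly. The paper closes this by proving full MAP achievability (Theorem~\ref{theorem:achievability-general}), which shows that with high probability $\true$ is the \emph{unique} maximizer of $\PR(G\mid\sigma)$ over \emph{all} $\sigma$, hence in particular over the random candidates in $\hat S(U)$. That is the clean way to finish, and it is a nontrivial ingredient (Lemmas~\ref{lem:low-disc} and~\ref{lem:high-disc}) rather than a one-line concentration estimate.
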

Theorem~\ref{thm:two-matrix-two-vec} not only shows that \spectwo~algorithms are always successful, but also shows that the choice of the encoding parameters $y_1,y_2$ does not matter too much as long as $y_1\neq y_2$ and they both lie outside a finite exception set. For example, we can choose $y_1,y_2\sim \text{Uniform}[0,1]$ independently.
Avoiding the finite exception set helps us ensure that $A_1$ and $A_2$ both have two eigenvectors with large, distinct eigenvalues. In contrast, 
the choice of the encoding is quite important for \specone~algorithms in Theorem~\ref{thm:one-matrix-two-vec}~\ref{thm:one-matrix-two-vec-1}. In fact, for $p_1 = p_2 = p$ or $p_1 = p$ and $p_2 = q$, the only choice of $y$ that yields an optimal algorithm is $\log (\frac{1-q}{1-p}) / \log \frac{p}{q}$. Thus, \spectwo~algorithms leads to a much broader and flexible class of algorithms as compared to \specone.

Finally, we show that \spectwo~succeeds for the recovery of $k \geq 3$ communities, as long as the parameters $P, \rho$ satisfy certain conditions. To this end, let us define \spectwo~for general $k$.

\begin{breakablealgorithm}
\caption{\textsc{Classify-Multiple}}\label{alg:classify-multiple}
\begin{algorithmic}[1]
\Require{Censored graph $G$ on $n$ vertices, vectors $(u_i)_{i=1}^m \subset \R^n$, and weight vectors $(a_i)_{i=1}^k \subset \R^m$.
}
\Ensure{Community classification.}
\vspace{0.2cm}
\State Let $U$ be the $n \times m$ matrix whose $i$-th column is $u_i$.
    \State 
For $s \in \{\pm 1\}^{m}$, let $D^{\sss (s)}:= \diag(s)$. Compute the set of possible assignments $\hat{S}$ consisting of $\hat{\sigma}(\cdot; s)$ with $s \in \{\pm 1\}^m$ such that
\begin{align*}
    \hat{\sigma}(v; s) = \argmax_{i \in [k]} \Big\{\big(U  D^{\sss (s)}  a_i\big)_v  \Big\} \text{ for each }v\in [n].
\end{align*}
\State Output $\hat{\sigma}(\cdot; s)$  that maximizes the posterior probability  $\mathbb{P}( \hat{\sigma} \mid G)$ over $\hat{\sigma} \in \hat{S}$.
\end{algorithmic}
\end{breakablealgorithm}
We will use this algorithm with $m=2k$, and the top $k$ eigenvectors from each of two signed adjacency matrices.

\begin{definition}[\spectwo~for $k\geq 3$ communities]
An algorithm $\mathcal{A}(G,y_1,y_2,(a_i)_{i=1}^{k}, 
T)$ in this class takes as input a censored graph $G$, two encoding parameters $y_1,y_2\in \R_+$ with $y_1\neq y_2$ and $(a_i)_{i=1}^{k} \subset \R^{2k}$.
The algorithm considers two signed adjacency matrices $A_1 = A(G,y_1)$ and $A_2 = A(G,y_2)$, and computes their top $k$ eigenvectors $(u_i^1)_{i\in [k]},(u_i^2)_{i\in [k]}$.  
Then the algorithm outputs $\textsc{Classify-Multiple}((u_i^r)_{i \in [k], r = 1,2}, (a_i)_{i=1}^{k})$.
As before, we denote the output of algorithm $\cA$ from this class as~$\hsig_{\sss \cA}$.
\end{definition}

\begin{theorem}[Success of \spectwo~ for $k\geq3$ communities]\label{thm:two-matrix-k-communities}
Let $G \sim \CSBM_n^k(\rho, P, t)$ where $\rho \in (0,1)^k$ is such that $\sum_i\rho_i = 1$, and $P \in (0,1)^{k \times k}$ is a symmetric matrix. 
Further, suppose that $P\cdot \diag(\rho)$ has exactly $k$ distinct non-zero eigenvalues.
Then there exists a finite set $\mathcal{Y} \subset \R_+$ such that for any $y_1\neq y_2$ and $y_1,y_2 \notin \mathcal{Y}$, the following holds: 
there exist explicit vectors $(a_i)_{i = 1}^{k} \subset \R^{2k}$ such that the algorithm  $\mathcal{A}(G,y_1,y_2,(a_i)_{i = 1}^{k})$ from the class $\spectwo$ satisfies
$$\lim_{n\to\infty}\PR(\hsig_{\sss \cA} \text{ achieves exact recovery}) = 1,$$
for any $t > t_c$.
In particular, Algorithm \ref{alg:two-matrix-b} produces such an estimator.
\end{theorem}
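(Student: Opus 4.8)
The plan is to select the coefficient vectors $(a_{i})_{i=1}^{2k}\subset\R^{k}$ so that the $\R^{k}$-valued score $\mathrm{sc}(j)$ that \textsc{Classify-Multiple} thresholds at vertex $j$ agrees, up to multiplication by a positive scalar and addition of a coordinate-independent shift, with the genie log-likelihood statistic
\[
L_{\ell}(j)\;=\;\sum_{v:\,\{j,v\}\text{ present}}\log P_{\ell\,\true(v)}\;+\;\sum_{v:\,\{j,v\}\text{ absent}}\log\bigl(1-P_{\ell\,\true(v)}\bigr),\qquad \ell\in[k].
\]
Once this is achieved, exact recovery for every $t>t_{c}$ follows from the achievability counterpart of Theorem~\ref{theorem:impossibility}: a standard large-deviation computation (see \cite{Abbe2015,Dhara2022a}) gives a constant $\delta>0$ such that, with high probability, $L_{\true(j)}(j)-\max_{\ell\ne\true(j)}L_{\ell}(j)\ge\delta\log n$ for \emph{all} $j$ simultaneously, so any $\R^{k}$-valued statistic lying within additive $o(\log n)$ of $L(j)$ modulo an $\ell$-independent shift has $\argmax$ equal to $\true(j)$ for all $j$.

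First I would record the spectral structure of the encoding matrices. Conditionally on $\true$, $\E[A(G,y)]=\tfrac{t\log n}{n}\,S_{0}Q_{y}S_{0}^{\top}+O_{\mathrm{op}}(\log n/n)$, where $Q_{y}$ is the symmetric $k\times k$ matrix with $(Q_{y})_{ab}=(1+y)P_{ab}-y$. The nonzero eigenvalues of $S_{0}Q_{y}S_{0}^{\top}$ are those of $Q_{y}(S_{0}^{\top}S_{0})\approx n\,Q_{y}\diag(\rho)$, and I take $\mathcal Y\subset\R_{+}$ to be the set of $y$ at which $Q_{y}\diag(\rho)$ has a repeated eigenvalue or a zero eigenvalue; since $Q_{0}\diag(\rho)=P\diag(\rho)$ has $k$ distinct nonzero eigenvalues by hypothesis, these are the positive roots of two polynomials in $y$ that do not vanish at $y=0$, so $\mathcal Y$ is finite. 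For $y\notin\mathcal Y$ the matrix $\E[A(G,y)]$ has exactly $k$ nonzero eigenvalues $\lambda_{1}^{*},\dots,\lambda_{k}^{*}$, each of order $\log n$ and with pairwise gaps of order $\log n$, and its eigenvectors are constant on communities, $u_{i}^{*}(v)=\tfrac1{\sqrt n}(\psi_{i})_{\true(v)}$, where $\psi_{1},\dots,\psi_{k}$ is a basis of $\R^{k}$ (the eigenvectors of $Q_{y}\diag(\rho)$). Since $\|A(G,y)-\E[A(G,y)]\|_{\mathrm{op}}=O(\sqrt{\log n})$, the top-$k$ eigenvectors of $A(G,y)$ are separated from the bulk, and I would invoke the entrywise eigenvector expansion of \cite{Abbe2020} (applied to $A(G,y_{1})$ and $A(G,y_{2})$, with superscript $(r)$ marking the $y=y_{r}$ objects): up to a global sign,
\[
u_{i}^{(r)}(j)\;=\;u_{i}^{(r)*}(j)+\frac{1}{\lambda_{i}^{(r)*}}\Bigl[\bigl(A(G,y_{r})-\E[A(G,y_{r})]\bigr)u_{i}^{(r)*}\Bigr]_{j}+o(1/\sqrt n)\qquad\text{uniformly in }j.
\]

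A short computation---substituting this expansion into $\mathrm{sc}(j)=\sum_{i=1}^{k}a_{i}u_{i}^{(1)}(j)+\sum_{i=1}^{k}a_{k+i}u_{i}^{(2)}(j)$ and using that each $\psi_{i}^{(r)}$ is an eigenvector of $Q_{y_{r}}\diag(\rho)$, so that the zeroth- and first-order terms recombine into an uncentered edge-sum---shows that
\[
\sqrt n\,t\log n\cdot\mathrm{sc}(j)_{\ell}\;=\;\sum_{v}\Bigl[A(G,y_{1})_{jv}\,g^{(1)}(\true(v))_{\ell}+A(G,y_{2})_{jv}\,g^{(2)}(\true(v))_{\ell}\Bigr]+o(\log n),
\]
where $g^{(r)}(b)_{\ell}=\sum_{i}(a_{(r-1)k+i})_{\ell}\,(\psi_{i}^{(r)})_{b}\,(t\log n)/\lambda_{i}^{(r)*}$. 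Because $\{\psi_{i}^{(r)}\}_{i}$ is a basis, the map $(a_{(r-1)k+i})_{i}\mapsto(g^{(r)}(b))_{b\in[k]}$ is a linear bijection of $(\R^{k})^{k}$, so $g^{(1)},g^{(2)}\colon[k]\to\R^{k}$ are entirely at our disposal. Splitting the sum over $v$ by edge status, the censored edges contribute $0$, leaving $\sum_{v:\,\text{present}}\bigl[g^{(1)}+g^{(2)}\bigr](\true(v))_{\ell}-\sum_{v:\,\text{absent}}\bigl[y_{1}g^{(1)}+y_{2}g^{(2)}\bigr](\true(v))_{\ell}$. To make this proportional to $L_{\ell}(j)$ modulo an $\ell$-independent term, I would solve, for each $b\in[k]$, the linear system $g^{(1)}(b)_{\ell}+g^{(2)}(b)_{\ell}=\log P_{\ell b}$ and $-y_{1}g^{(1)}(b)_{\ell}-y_{2}g^{(2)}(b)_{\ell}=\log(1-P_{\ell b})$, whose coefficient matrix has determinant $y_{1}-y_{2}\ne 0$; inverting the bijection above then produces the explicit $(a_{i})_{i=1}^{2k}$. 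This is precisely where the second matrix is indispensable: with a single matrix $A(G,y_{1})$ the ``present weight'' $g^{(1)}(b)$ and the ``absent weight'' $-y_{1}g^{(1)}(b)$ are forced proportional, so one cannot match both $(\log P_{\ell b})_{\ell}$ and $(\log(1-P_{\ell b}))_{\ell}$ unless $(P,\rho)$ satisfies a non-generic algebraic identity---exactly the dichotomy of Theorem~\ref{thm:one-matrix-two-vec}. With the chosen coefficients, $\sqrt n\,t\log n\cdot\mathrm{sc}(j)_{\ell}=L_{\ell}(j)+(\ell\text{-independent})+o(\log n)$, so the genie margin $\delta\log n$ dominates the error and $\argmax_{\ell}\mathrm{sc}(j)_{\ell}=\true(j)$ for all $j$ with high probability. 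The posterior-maximization over the $2^{2k}$ sign patterns $s\in\{\pm1\}^{2k}$ resolves the eigenvector-sign ambiguity and cannot select a classification that fails exact recovery---such a classification has posterior smaller by a factor $e^{-\Omega(\log n)}$---exactly as in the two-community argument of \cite{Dhara2022a}.

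The step I expect to be the main obstacle is the entrywise eigenvector control underlying the second paragraph: one must show, uniformly over all $n$ vertices and for both encoding matrices, that $u_{i}^{(r)}$ agrees with its first-order expansion with per-coordinate error $o(1/\sqrt n)$ (i.e.\ $o(\log n)$ after the $\sqrt n\,t\log n$ rescaling), which requires transporting the leave-one-out / iterative-refinement machinery of \cite{Abbe2020} (and \cite{Dhara2022a,Dhara2022b}) to the ternary signed encoding and to all $k$ leading eigenvectors simultaneously. It is here that the hypothesis ``$P\diag(\rho)$ has $k$ distinct nonzero eigenvalues'' is essential: it forces eigengaps of order $\log n$ in $\E[A(G,y_{r})]$, without which the individual population eigenvectors $\psi_{i}^{(r)}$---hence the functions $g^{(r)}$ we are tuning---are not stable. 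By contrast, the linear-algebraic core (the collapse of the score and the $2\times2$ decoupling) is short and self-contained.
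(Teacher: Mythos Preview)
Your proposal is correct and follows essentially the same route as the paper: the entrywise expansion $u_i^{(r)}\approx A^{(r)}u_i^{(r)*}/\lambda_i^{(r)*}$ is exactly Corollary~\ref{corollary:entrywise}, your $2\times2$ system in $g^{(1)},g^{(2)}$ is the paper's system~\eqref{z-star-gen-k}, your genie margin $L_{\true(j)}(j)-\max_{\ell\ne\true(j)}L_\ell(j)\ge\delta\log n$ is Corollary~\ref{corollary:max}, and the sign resolution via posterior maximization is handled identically by appeal to Theorem~\ref{theorem:achievability-general}. Your discriminant/determinant argument for the finiteness of $\mathcal Y$ is in fact a bit cleaner than the GCD argument the paper gives in Lemma~\ref{edgeEigenvalue-general-k}.
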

\begin{remark} \label{rem:sign-amb}
    The fact that the encoding parameters $y_1,y_2$ lie outside a finite set in Theorems~\ref{thm:two-matrix-two-vec}~and~\ref{thm:two-matrix-k-communities} is required to ensure that $\E[A(G,y_1)]$, $\E[A(G,y_2)]$ have $k$ distinct and non-zero eigenvalues. 
    The requirement of having $k$ non-zero eigenvalues is intuitive as we seek to recover an underlying rank~$k$ structure. Moreover, the eigenvectors of  $A(G,y)$  can only be approximated up to an unknown orthogonal transformation.
    This causes an ambiguity for defining the final estimator. 
    When the eigenvalues are distinct, this ambiguity can be resolved by going over all possible sign flips $s$ and choosing the best among them, as in Algorithm~\ref{alg:classify}~Step~2, or Algorithm~\ref{alg:classify-multiple}~Step~2. 
\end{remark}
\begin{remark}The condition in Theorem~\ref{thm:two-matrix-k-communities} that $P\cdot \diag(\rho)$ has distinct and non-zero values can be relaxed. In fact, if $P^{\sss (y)}$ is the matrix such that $P^{\sss (y)}_{ij}:= \rho_j(P_{ij} - y(1-P_{ij})) $, then by Lemma~\ref{edgeEigenvalue-general-k}, the same conclusions as Theorem~\ref{thm:two-matrix-k-communities} hold as long as there exists a $y$ such that $P^{\sss (y)}$ has $k$ distinct and non-zero eigenvalues. In fact, we can simply choose $y \sim \text{Uniform}((0,1))$. 
\end{remark}

\subsection{Proof Ideas}\label{sec:proof-ideas} 
We now give a brief outline of the proofs.
For a vertex $v$, we call 
$d(v) = (d_{+j},d_{-j})_{j\in [k]}\in \Z_+^{2k}$ the \emph{degree profile} of the vertex, where $d_{+j} = d_{+j}(v),d_{-j}=d_{-j}(v)$
respectively denote the number of present and absent edges from $v$ to community $j$ for $j \in [k]$.
Let us re-scale $\bar{d}(v) = d(v)/t \log n$.
The proof consists mainly of two steps:\\

\noindent \textbf{Step 1: Characterization of spectral algorithms using degree profiles.} 
Given any signed adjacency matrix $A = A(G,y)$, the starting point of our analysis is to find a good $\ell_\infty$-approximation for the eigenvectors. 
Using a recent general framework by Abbe,  Fan,  Wang and Zhong~\cite{Abbe2020}, we can show that 
the top $k$ eigenvalues $(u_i)_{i\in [k]}$ of $A$  satisfy (see Corollary~\ref{corollary:entrywise}): 
\begin{align*}
    \min_{s\in \{\pm 1\}}\bigg\|su_i -\frac{Au_i^\star}{\lambda_i^\star}\bigg\|_\infty = o\bigg(\frac{1}{\sqrt{n}} \bigg), \quad \text{ for }i \in [k], 
\end{align*}
with probability $1-o(1)$, where $(\lambda_i^\star,u_i^\star)$ is the $i$-th largest eigenvalue/eigenvector pair of $\E[A]$. Note that $\E[A]$ is a rank-two matrix with $u_i^\star$'s taking the same constant value corresponding to all vertices in the same community. 
The low rank of $\E[A]$ allows us to express $Au_i^\star$ as a linear combination of the degree profiles and thus drastically reduce the dimension of the problem. 
Using this representation, any linear combination of the $u_i's$ is also an expressible linear combination of degree profiles. 
Hence, we show that spectral algorithms essentially are asymptotically equivalent to classifying vertices depending on whether $\langle w_{\sss \mathrm{Spec}},\bar{d}(v)\rangle > (T+o(1))$ or $\langle w_{\sss \mathrm{Spec}},\bar{d}(v)\rangle<(T-o(1))$ for some $w_{\sss \mathrm{Spec}}\in \R^{2k}, T\in \R$. 
\\

\noindent \textbf{Step 2: Geometry of degree profiles.} At this point, the problem reduces to understanding whether, for a given vector $w$, a hyperplane orthogonal to $w$ can separate re-scaled degree profiles. 
To this end, for each community $i$, we define a measure of \emph{dissonance} $\eta_i$ for rescaled degree profiles, and define the $\delta$-\emph{dissonance range} as $\DR_{\delta} (i):=\{\bar{d}: \eta_i(\bar{d})\leq \delta\}$. 
We show that the $\DR_{\delta} (i)$'s are closed and convex sets. Moreover, (1) if $ 1/t < \delta$, then all the re-scaled degree profiles from community $i$ lie in $\DR_{\delta} (i)$ and (2) if $  \delta < 1/t$, then the re-scaled degree profiles from community $i$ are asymptotically \emph{dense} in $\DR_{\delta} (i)$ (see Lemma~\ref{DRmeaning}). In a sense, one can think of $\DR_{1/t} (i)$ as the cloud of re-scaled degree profiles arising from community~$i$. 
\begin{figure}
    \centering
    \begin{tikzpicture}
    \node[inner sep=0pt] (clouds) at (0,0)
    {\includegraphics[width=.45\textwidth]{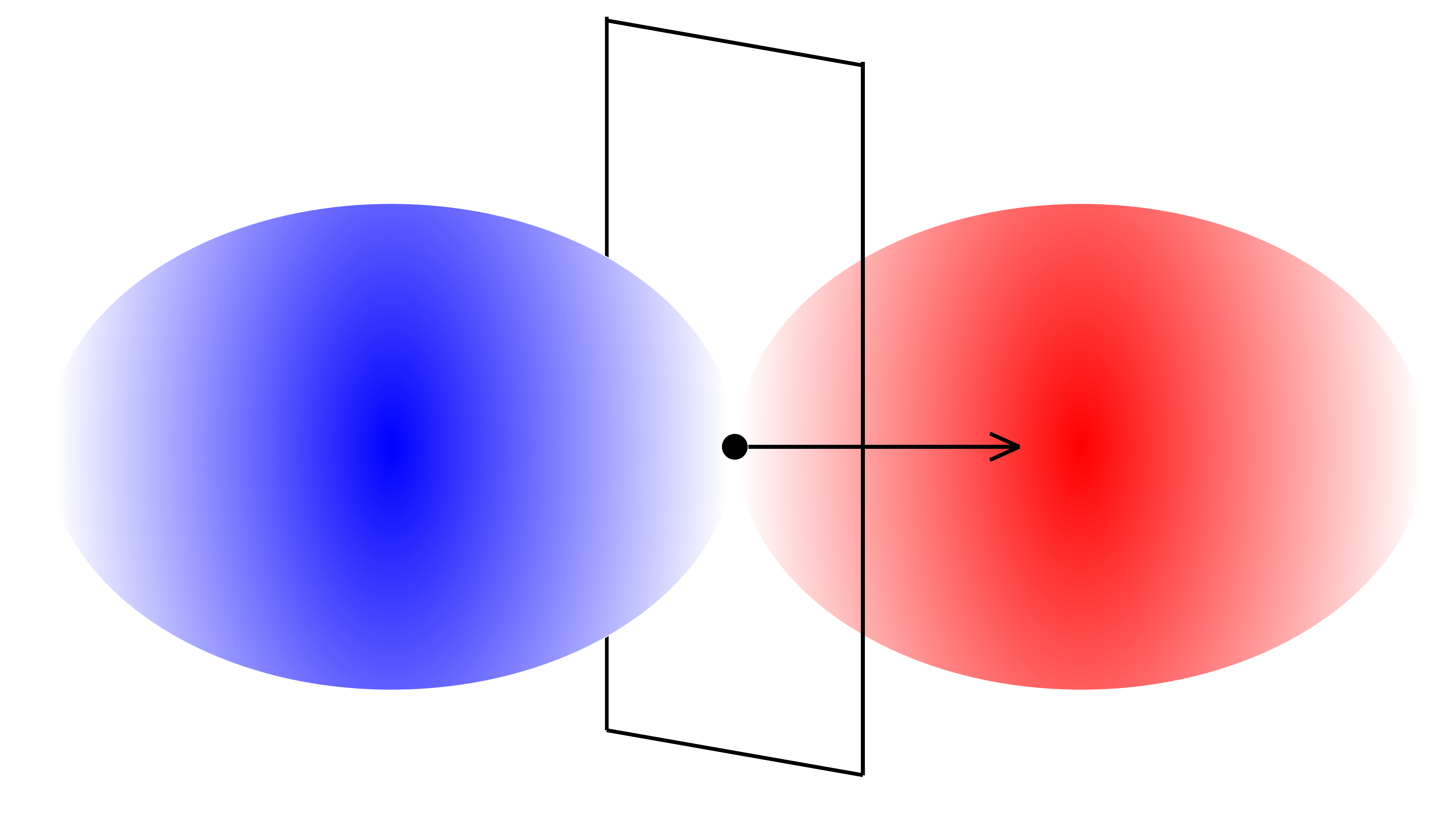}};
    \node[inner sep=0pt] (x) at (.1,-.5) {$x^\star$};
    \node[inner sep=0pt] (w) at (1.5,.1) {$w^\star$};
    \node[inner sep=0pt] (DR1) at (-1.8,-1.9) {{$\DR_\delta(1)$}};
    \node[inner sep=0pt] (DR2) at (1.9,-1.9) {{$\DR_\delta(2)$}};
    \node[inner sep=0pt] (opt) at (1.6,2.2) {{Optimally separating hyperplane}};
\end{tikzpicture}
    \caption{Visualizing dissonance ranges of two communities near $t_c$.}
    \label{fig:visual}
\end{figure}

Next, consider the ``hardest'' scenario when $t = t_c$. In that case, we show that the clouds $\DR_{1/t_c} (i)$ and $\DR_{1/t_c} (j)$ corresponding to communities $i$ and $j$ intersect only at a single point~$x^\star$ (see Lemma~\ref{DRoverlapThreshold}), and as $t$ increases away from $t_c$, the two clouds gradually separate. 
Due to convexity, $\DR_{1/t_c} (i)$ and $\DR_{1/t_c} (j)$ lie on two opposite sides of the tangent hyperplane at~$x^\star$. Let $w^\star$ be such that this tangent hyperplane is given by $H ^\star= \{x:\langle w^\star,x-x^\star\rangle = 0\}$. 
Then $H ^\star$ is the only hyperplane that separates the clouds of degree profiles near $t_c$; see Figure~\ref{fig:visual}. 
Thus, as long as we are trying to separate clouds of degree profiles using this $H ^\star$, we will succeed for any $t>t_c$. However, if we try to separate the clouds with a different hyperplane $\{x:\langle w,x-x^\star\rangle = 0\}$ for some $w\notin \spn(w^\star)$, then we will fail sufficiently close to $t_c$.

Combining this with the asymptotic characterization of spectral algorithms, it thus remains to be seen whether we can choose the parameters of the spectral algorithm in such a way that $w_{\sss \mathrm{Spec}} \in \spn(w^\star)$. 
For \specone~algorithms in the two community case, we show that $w_{\sss \mathrm{Spec}}$ takes values in a restricted set $\{w\in \R^4: \frac{w_1}{w_2} = \frac{w_3}{w_4} = y\}$, no matter the choice of the parameters. 
For \specone~algorithms, generally $w_{\sss \mathrm{Spec}} \notin \spn(w^\star)$ except for the specific cases in Theorem~\ref{thm:one-matrix-two-vec}~\ref{thm:one-matrix-two-vec-1}. 
However, for \spectwo~algorithms, there always exists a way to choose the linear combinations in such a way that $w_{\sss \mathrm{Spec}} \in \spn(w^\star)$, which ensures  their optimality. \\

\noindent \textbf{Information Theoretic Threshold.}
There is an alternate way of characterizing the information theoretic boundary by observing that even the ``best'' estimator will separate communities using the hyperplane $H^\star$ above.
Consider the problem of classifying a single vertex $v$ given
$G$ and $(\true(u))_{u\in [n]\setminus \{v\}}$. The MAP estimator for the community assignment of $v$ is called the genie-based estimator. 
This is an optimal estimator (even though it is not computable given $G$). 
Now, a direct computation shows that the genie-based estimator classifies a vertex in one of the two communities based on whether $\langle w^{\star},\bar{d}(v)\rangle > 0$ or  $\langle w^{\star},\bar{d}(v)\rangle < 0$, with the same $w^\star$ as above (see \cite[Proposition~6.1]{Dhara2022a}). Thus, in a sense, separating degree profiles based on hyperplanes orthogonal to $w^\star$ is the optimal decision rule. 
When $t < t_c$, the degree profile clouds of the two communities overlap significantly, and therefore even the optimal estimator misclassifies a growing number of vertices. 
This gives rise to the information theoretic impossibility region for exact recovery when $t < t_c$.

\subsection{Discussion} 
Theorems~\ref{thm:two-matrix-two-vec}~and~\ref{thm:two-matrix-k-communities} prove optimality of spectral algorithms using two matrices. The use of two matrices hinges on the fact that there are three types of edge information:
present, absent, and censored, and the information about a vertex's community coming from present and absent edges are of the same order.
We believe that our results generalize in a straightforward manner to the scenario of labeled edges, where the possible edge statuses $\{\text{present,absent}\}$ are replaced by $L$ different types. Indeed, this is the setting considered by \cite{Heimlicher2012,Ryu2016,YP16}. In particular, \cite{YP16} determined the information-theoretic threshold for exact recovery and proposed an efficient, iterative spectral method. We believe that optimal (vanilla) spectral algorithms in the general $L$-labeled edge scenario must use $L$ different encoding matrices.

We also believe that the framework of this paper can be extended beyond graphs to other important machine learning problems with censoring on top of an underlying low-rank structure. This may include non-square matrices (e.g. items vs features matrix in recommender systems). We leave these as interesting future research questions.

\subsection{Notation.}
Let $[n] = \{1, ,2, \dots, n\}$. We often use the Bachmann--Landau asymptotic notation $o(1), O(1)$ etc. For two sequences $(a_n)_{n\geq 1}$ and $(b_n)_{n\geq 1}$, we write $a_n \asymp b_n$ as a shorthand for $\lim_{n\to \infty}\frac{a_n}{b_n} =1$. 
Given a sequence of probability measures $(\PR_n)_{n\geq 1}$, a sequence of events $(\cE_n)_{n\geq 1}$ is said to hold \emph{with high probability} if $\lim_{n\to\infty} \PR_n(\cE_n) = 1$

For a vector $x \in \mathbb{R}^d$, we define $\Vert x \Vert_2 = (\sum_{i} x_i^2)^{1/2}$ and $\Vert x \Vert_{\infty} = \max_i |x_i|$. 
For $x \in \R^d$ and $r>0$, we denote the open $\ell_2$-ball of radius $r$ around $x$ by $B_2(x,r)$.
Similarly, for $X \subset \R^d$ and $r>0$, we denote the open $\ell_2$-ball of radius $r$ around $X$ by $B_2(X,r)$. For a collection of vectors $(x_i)_{i} \subset \R^d$, we denote their linear span by $\spn((x_i)_{i})$. Also, given a subspace $\mathcal{Z} \subset \R^d$, the projection of $x$ onto $\mathcal{Z}$ will be denoted by $\proj_{\sss \mathcal{Z}} (x)$.

For a matrix $M \in \mathbb{R}^{n \times d}$, we use $M_{i \cdot}$ to refer to its $i$-th row, represented as a row vector. 
Given a matrix $M$, $\Vert M \Vert_2 = \max_{\Vert x \Vert_2 = 1} \Vert M x \Vert_2$ is the spectral norm, 
 $\Vert M \Vert_{2 \to \infty} = \max_i \Vert M_{i \cdot}\Vert_2$ is the matrix $2 \to \infty$ norm, 
 and $\|M\|_{F} = (\sum_{i,j}M_{ij}^2)^{1/2}$ is the Frobenius norm. Whenever we apply a real-value function to a vector, it should be interpreted as a coordinatewise operation.
 
 Throughout, we condition on the event that the random community assignments given by $\sigma_0$ are close to their expected sizes. Specifically, note that, since $n_j:= \{v: \true(v) = j\}$ are marginally distributed as $\text{Bin}\left(n, \rho_j\right)$, and therefore, for all $\ve \in (0,1)$, 
\begin{eq} \label{eq:concentration}
\left|n_j - n\rho_j \right| \leq \ve n
\end{eq}
with probability at least $1 - 2\exp(-\ve^2 n/2 )$ by applying the McDiarmid inequality. Throughout, the notation $\PR(\cdot), \E[\cdot]$ conditions 
on a fixed value of $\sigma_0$ satisfying \eqref{eq:concentration} with $\varepsilon = n^{-1/3}$.

\subsection{Organization}
We start analyzing the geometric properties of the degree profile clouds in Section~\ref{sec:geometric}, which lies in the heart of all the proofs. 
Subsequently, in Section~\ref{sec:achieve-impossible}, we prove the impossibility result and also prove that the Maximum a Posteriori (MAP) Estimator always succeeds up to the information theoretic threshold. The entrywise bounds for the top eigenvectors are provided in Section~\ref{sec:entrywise}. Finally, we complete the proofs of Theorems~\ref{thm:one-matrix-two-vec},~\ref{thm:two-matrix-two-vec} in Section~\ref{sec:spectral}.

\section{Geometry of  Degree Profiles}\label{sec:geometric}
In this section, we develop the technical tools for Step 2 in Section~\ref{sec:proof-ideas}.
We will develop these tools for general $k$-community CSBMs. Throughout, we fix  $\rho\in (0,1)^k$ such that $\sum_{i=1}^k \rho_i = 1$ and let $P\in (0,1)^{k\times k}$ be a symmetric matrix.
Let us define degree profiles, which will be the main object of analysis in this section.
\begin{definition}[Degree profile] Suppose that $G \sim \CSBM_n^k(P, \rho, t)$. 
For a vertex $v$, we define $d(v) = (d_{+r},d_{-r})_{r\in [k]}\in \Z_+^{2k}$ to be the \emph{degree profile} of $v$, where $d_{+r} = d_{+r}(v)$ and $d_{-r}=d_{-r}(v)$ respectively denote the number of present and absent edges from $v$ to community $r$ for $r\in [k]$. 
\end{definition}

As discussed in Section~\ref{sec:proof-ideas}, the $\ell_\infty$ approximation guarantee for the eigenvectors gives us an alternative characterization of spectral algorithms in terms of separating degree profiles of different communities using certain hyperplanes. The next proposition allows us to determine when separation using hyperplanes is impossible. Before the statement we need a couple of definitions. Let $V_i$ denote the vertices in community $i$.
\begin{definition}[Separates communities]\label{defn:separate}
We say that $w \in \mathbb{R}^{2k}$ \emph{separates communities $(i,j)$ with margin $\beta > 0$} if
\[\min_{v \in V_i} w^T d(v) \geq \beta/2 \quad \text{and} \quad  \max_{v \in V_j} w^T d(v) \leq -\beta/2.\]
or vice versa.
\end{definition}
If $w$ separates communities $(i,j)$ with margin $\beta > 0$, then computing the weighted degree profile $w^T d(v)$ for each $v \in V_i \cup V_j$ allows us to distinguish these two communities. Note that if $w$ separates communities $(i,j)$ with margin $\beta$, then $-w$ also separates communities $(i,j)$ with margin $\beta$. 
Next we define the scenario where a finite number of hyperplanes cannot separate the two communities. 

\begin{definition}[Confuses communities] \label{defn:confuse}
Let $(w_r)_{r = 1}^m \subset \mathbb{R}^{2k}$ and let $(\gamma_r)_{r=1}^m \subset \mathbb{R}$. We say that $[(w_r)_{r = 1}^m, (\gamma_r)_{r=1}^m]$ \emph{confuses communities $(i,j)$ at level $\beta$} if there exist $u \in V_i$, $v \in V_j$, and $s\in\{-1,1\}^m$ such that $s_r(w_r^T d(u)-\gamma_r)>\beta $ and $s_r(w_r^T d(v)-\gamma_r)>\beta $ for all $1\le r\le m$.
\end{definition}
In other words, there are representatives from communities $i$ and $j$, such that both of their degree profiles appear on the same sides of all the hyperplanes $\{x : w_r^T x = \gamma_r\}$. A larger value of $\beta$ means that the pair of degree profiles is farther from the hyperplanes.
Note that the notion of confusion also rules out the possibility of separation with multiple hyperplanes. We claim that there is a unique best vector for separating community $i$ and community $j$ in the following sense.

\begin{proposition}\label{prop:seperation-verification}
Let $G \sim \CSBM_n^k(\rho, P, t)$, $1\le i< j\le k$, and let $w^\star$ be the $2k$-dimensional vector such that
\begin{eq} \label{defn:genie-vec}
w^\star = \bigg(\log \frac{P_{ri}}{P_{rj}}, \log \frac{1-P_{ri}}{1-P_{rj}} \bigg)_{r\in [k]}.
\end{eq}
\begin{enumerate}[(1)]
    \item \label{prop:seperation-verification-1} If $t>1/\Delta_+(\theta_i,\theta_j)$, then there exists $\varepsilon>0$ such that $w^{\star}$ separates communities $i$ and $j$ with margin $\varepsilon\ln(n)$ with probability $1-o(1)$. 
    \item \label{prop:seperation-verification-2} Let $\mathcal{Z} \subset \R^{2k}$ be a linear subspace and  $w^{\star}\notin \mathcal{Z}$. There exists 
    $\mu>0$ such that if $t\Delta_+(\theta_i,\theta_j) < 1 + \mu$, then for every $m>0$ there exists $\varepsilon>0$ such that the following holds with probability $1-o(1)$: For every $z_1,...,z_{m}\in \mathcal{Z}$ and $\gamma_1,...,\gamma_{m}\in\mathbb{R}$, $[(z_r)_{r=1}^{m},(\gamma_r)_{r=1}^{m}]$ confuses communities $i$ and $j$ at level $\varepsilon\ln(n)$. 
\end{enumerate}

\end{proposition}

The above result yields the following corollary which is useful in designing our classification algorithm for $k \geq 3$ communities (Algorithm \ref{alg:two-matrix-b}).
\begin{corollary}\label{corollary:max}
If $t>1/\Delta_+(\theta_i,\theta_j)$, then there exists $\varepsilon>0$ such that with probability $1-o(1)$
\begin{align*}
&\big(\log(P_{ri}), \log(1-P_{ri}) \big)_{r\in [k]}\cdot d(v) >\max_{j\ne i}\big(\log(P_{rj}), \log (1-P_{rj}) \big)_{r\in [k]}\cdot d(v)+\varepsilon\log(n)  
\end{align*}
for all $i \in [k]$ and $v \in V_i$.
\end{corollary}
\begin{proof}
Proposition \ref{prop:seperation-verification} implies that with probability $1-o(1)$,
\begin{align*}
&\big(\log(P_{ri}), \log(1-P_{ri}) \big)_{r\in [k]}\cdot d(v)> \big(\log(P_{rj}), \log (1-P_{rj}) \big)_{r\in [k]}\cdot d(v)+\varepsilon\log(n)   
\end{align*}
for every $i, j \in [k], i \neq j$ and $v\in V_i$ . The claim follows.
\end{proof}

The rest of this section is devoted to the proof of Proposition~\ref{prop:seperation-verification}.
In Section~\ref{sec:dissonance-def}, we define the \emph{dissonance range} relative to a community as the set of $2k$-tuples that are sufficiently close to the average normalized degree profile for that community, and prove some of their analytic properties. In Section~\ref{sec:dissonance-CSBM}, we prove that, with high probability, the normalized degree profile of every vertex is within $o(1)$ of the dissonance range corresponding to its community. Moreover, we also show that the dissonance ranges are asymptotically dense in the sense that for every point in a dissonance range there is a vertex in the corresponding community whose normalized degree profile is within $o(1)$ of that point. 
Next, in Section~\ref{sec:seperate-hyperlane}, we prove that if the projections of two dissonance ranges onto the space spanned by a set of vectors overlap, then any set of hyperplanes perpendicular to those vectors confuses the corresponding communities. We prove this by showing that there are points in the interiors of the two dissonance ranges that are on the same sides of all such hyperplanes. In Section~\ref{sec:dis-overlap}, we show that for any two communities and the appropriate choice of $t$, their dissonance ranges intersect at a single point, the hyperplane perpendicular to $w^\star$ through that point separates the rest of the dissonance ranges, and the boundaries are also smooth in the vicinity of that point. 
Finally, in Section~\ref{sec:dis-optimal-recovery}, we prove Proposition~\ref{prop:seperation-verification} by observing that the hyperplane through the origin perpendicular to $w^\star$ separates the dissonance ranges corresponding to the underlying communities whenever $t$ is greater than the critical value, while the projections of the dissonance ranges onto any subspace of $\mathbb{R}^k$ not containing $w^\star$ will overlap for any value of $t$ sufficiently close to the critical value.

\subsection{Dissonance Range and its Properties.} \label{sec:dissonance-def}
Let us start by defining the dissonance range and obtaining some basic analytic properties.
\begin{definition}[Dissonance range]
Given $i\in[k]$ and $x \in \R_+^{2k}$, the \emph{dissonance} of $x$ relative to community $i$ is given by
\begin{align}
    \eta_i(x)=\sum_{r=1}^k &\bigg[ x_{1,r}\ln\bigg(\frac{x_{1,r}}{\e\rho_r P_{ri}} \bigg) +x_{2,r}\ln\bigg(\frac{x_{2,r}}{\e\rho_r (1-P_{ri})}\bigg)\bigg] + 1,\label{def:eta}
\end{align}
where we regard the terms in these expressions as being $0$ if the corresponding entry of $x$ is $0$. 
We also define the $\delta$-\emph{dissonance range} relative to community $i$ by $\DR_\delta(i):= \{x: \eta_i(x) \leq \delta\}$.
\end{definition}

\begin{lemma}\label{DRproperties}
Fix $i\in[k]$ and $\delta>0$. Then $\DR_\delta(i)$ is a bounded, closed and convex subset of $\R_+^{2k}$. In addition, for any $\delta'>\delta$, there exists $ \varepsilon>0$ such that 
\begin{align*}
    B_2(\DR_{\delta}(i),\varepsilon) \cap \mathbb{R}_+^{2k} \subset \DR_{\delta'}(i).
\end{align*}
\end{lemma}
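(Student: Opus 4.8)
The plan is to analyze the function $\eta_i$ directly via convexity and continuity, and then derive the thickening statement from compactness. First, I would observe that each summand of $\eta_i$ has the form $f(s,a) = s\ln(s/(ea))$ with $a>0$ a fixed constant, interpreted as $0$ at $s=0$. The scalar function $s\mapsto s\ln(s/(ea))$ is convex on $[0,\infty)$ (its second derivative is $1/s>0$ on $(0,\infty)$, and it is continuous at $0$ with value $0$), so $\eta_i$, being a sum of such terms composed with coordinate projections plus a constant, is convex on $\R_+^{2k}$ and continuous there. Consequently $\DR_\delta(i)=\{x\in\R_+^{2k}:\eta_i(x)\le\delta\}$ is an intersection of a convex set with the convex cone $\R_+^{2k}$, hence convex; and it is closed since it is the preimage of $(-\infty,\delta]$ under the continuous function $\eta_i$ restricted to the closed set $\R_+^{2k}$.

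For boundedness, I would use that $s\ln(s/(ea))\to\infty$ as $s\to\infty$; more quantitatively, $s\ln(s/(ea)) \ge s\ln(s) - s(1+\ln a)$, so along any coordinate $x_{1,r}$ or $x_{2,r}$ going to infinity while the others stay nonnegative, the corresponding term dominates and $\eta_i(x)\to\infty$ (the other terms are bounded below by a constant, since $\inf_{s\ge 0} s\ln(s/(ea)) = -a > -\infty$, attained at $s=a$). Hence $\{\eta_i\le\delta\}$ is bounded. Combining, $\DR_\delta(i)$ is compact and convex.

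For the thickening claim: fix $\delta'>\delta$ and suppose toward a contradiction that no $\varepsilon>0$ works, i.e. for each $n$ there is $y_n\in B_2(\DR_\delta(i),1/n)\cap\R_+^{2k}$ with $\eta_i(y_n)>\delta'$. Pick $x_n\in\DR_\delta(i)$ with $\|y_n-x_n\|_2<1/n$. Since $\DR_\delta(i)$ is bounded, $(x_n)$ has a convergent subsequence with limit $x_\infty\in\DR_\delta(i)$ (closedness), and along that subsequence $y_n\to x_\infty$ as well; $y_n\in\R_+^{2k}$ and $\R_+^{2k}$ closed gives $x_\infty\in\R_+^{2k}$, so by continuity of $\eta_i$ we get $\delta'\le\limsup \eta_i(y_n)=\eta_i(x_\infty)\le\delta<\delta'$, a contradiction. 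This yields the desired $\varepsilon$.

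The only mildly delicate point — and the one I would be most careful about — is the behavior of the summands at the boundary where some coordinate of $x$ vanishes: one must check that $f(s,a)=s\ln(s/(ea))$ is genuinely continuous (and convex) up to and including $s=0$ under the stated convention $f(0,a)=0$, since $\lim_{s\downarrow 0} s\ln(s/(ea)) = 0$. Once that is nailed down, continuity of $\eta_i$ on all of $\R_+^{2k}$ (including boundary faces) is automatic, and the compactness argument above goes through without further fuss. Everything else is routine.
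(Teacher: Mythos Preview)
Your proposal is correct and follows essentially the same approach as the paper: convexity from the second derivative of each summand, closedness from continuity of $\eta_i$, and boundedness from the coercivity of $s\mapsto s\ln(s/(ea))$. For the thickening claim the paper invokes uniform continuity of $\eta_i$ on a bounded box $[0,b]^{2k}$ directly, whereas you argue by sequential compactness and continuity; these are equivalent formulations of the same idea, and your treatment of the boundary (checking $s\ln(s/(ea))\to 0$ as $s\downarrow 0$) is actually slightly more careful than the paper's.
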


\begin{proof}
We first show that $\DR_\delta(i)$ is bounded. Note that $z\ln(z) \to \infty$ if and only if $z\to\infty$. Thus, if $\DR_\delta(i)$ were unbounded, then we could find a subsequence $(x_k)_{k\geq 1}\subset \DR_\delta(i)$ such that $\eta_i(x_k) \to \infty$. However, $\eta_i(x_k) \leq \delta$ by definition of $\DR_\delta(i)$. This leads to a contradiction and hence $\DR_\delta(i)$ is bounded. 

Next, since $\eta_i$ is continuous, we have that $\DR_\delta(i)$ is closed. 
Further, $\eta_i$ is a sum of convex function and hence it is convex. Therefore, its sublevel set $\DR_\delta(i)$ is convex.

To show the last claim, note that $\eta_i$ is uniformly continuous on $[0,b]^{2k}$ for any $b>0$.
Thus, there exists $\varepsilon>0$ such that for any $x,x'\in[0,b]^{2k}$ with $\|x-x'\|_2 \leq \varepsilon$, we have  $|\eta_i(x)-\eta_i(x')|\le \delta'-\delta$. 
This proves $ B_2(\DR_{\delta}(i),\varepsilon) \cap \mathbb{R}_+^{2k} \subset \DR_{\delta'}(i)$, and completes the proof of the lemma.
\end{proof}

\subsection{Relating Dissonance Range with Degree Profiles of CSBMs} \label{sec:dissonance-CSBM}
Our next goal will be to identify which degree profiles are likely to occur in CSBMs. 
\begin{lemma} \label{DRmeaning}
Fix $0 < \delta < \delta'$.
Let $t\in (1/\delta',1/\delta)$ and $G \sim \CSBM_n^k(P, \rho, t)$. The following holds with probability $1-o(1)$:
\begin{enumerate}
    \item \label{DRmeaning-1} There exists $c > 0$ such that for every $i\in [k]$ and $d\in\mathbb{Z}_+^{2k}$ satisfying $d/(t\log(n))\in \DR_{\delta}(i)$, there are at least $n^c$ vertices in community~$i$ with degree profile $d$. 
    \item \label{DRmeaning-2} For each $i\in [k]$ and for every vertex $v\in G$ in community $i$, the degree profile of $v$ is of the form $x t\ln(n)$ for some $x\in \DR_{\delta'}(i)$.
\end{enumerate} 
\end{lemma}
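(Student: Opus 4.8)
The plan is to reduce both statements to a single sharp estimate for the probability that a fixed vertex realizes a prescribed degree profile, and then finish with first/second moment arguments and union bounds.

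\textbf{Key estimate.} Conditionally on $\sigma_0$ satisfying \eqref{eq:concentration} (so $n_j = n\rho_j(1+o(1))$), I claim that for any fixed $M>0$, uniformly over all $d\in\mathbb{Z}_+^{2k}$ with $\|d\|_\infty\le M\log n$ and for any vertex $v$ with $\true(v)=i$,
\[
\PR\big(d(v)=d\big) \;=\; n^{-t\,\eta_i(d/(t\log n))\,+\,o(1)}.
\]
Indeed, for each $r$ the counts of present, absent, and censored pairs from $v$ to community $r$ form a multinomial vector over the $n_r-\mathbb{1}\{r=i\}$ potential edges, with cell probabilities $\tfrac{t\log n}{n}P_{ri}$, $\tfrac{t\log n}{n}(1-P_{ri})$, $1-\tfrac{t\log n}{n}$, independently across $r$. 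Since every coordinate of $d$ is $O(\log n)=o(\sqrt n)$, a standard multinomial-to-Poisson passage gives that $\PR(d(v)=d)$ equals $n^{o(1)}$ times $\prod_r \PR(\mathrm{Poi}(\rho_rP_{ri}\,t\log n)=d_{+r})\,\PR(\mathrm{Poi}(\rho_r(1-P_{ri})\,t\log n)=d_{-r})$; Stirling's formula applied to each point mass, together with $\sum_r[\rho_rP_{ri}+\rho_r(1-P_{ri})]=\sum_r\rho_r=1$, collapses the exponent to exactly $-t\,\eta_i(d/(t\log n))$ (the convention $0\ln 0=0$ takes care of coordinates that vanish or are $O(1)$). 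I would also record the crude tail bound that, for $M$ large enough, $\PR(\|d(v)\|_\infty>M\log n)\le n^{-2}$, so that whp every vertex has $\|d(v)\|_\infty\le M\log n$.

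\textbf{Part (2).} Fix $M$ large. By the tail bound it suffices to union-bound over the $O((\log n)^{2k})$ profiles $d$ with $\|d\|_\infty\le M\log n$ and $\eta_i(d/(t\log n))>\delta'$. For such $d$, since $t>1/\delta'$ we have $t\delta'>1$, so by the key estimate the expected number of community-$i$ vertices with profile $d$ is at most $n\cdot n^{-t\delta'+o(1)}=n^{-(t\delta'-1)+o(1)}=o((\log n)^{-2k})$. Markov's inequality plus a union bound over $i\in[k]$ and such $d$ shows that whp no community-$i$ vertex $v$ has $\eta_i(d(v)/(t\log n))>\delta'$, i.e. $d(v)/(t\log n)\in\DR_{\delta'}(i)$.

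\textbf{Part (1) and main obstacle.} Fix $i$ and $d$ with $x:=d/(t\log n)\in\DR_\delta(i)$; then $\eta_i(x)\le\delta$, and since $\DR_\delta(i)$ is bounded (Lemma~\ref{DRproperties}) such $d$ satisfy $\|d\|_\infty=O(\log n)$, so the key estimate gives $\PR(d(v)=d)\ge n^{-t\delta+o(1)}$ uniformly and hence $\E[N_{i,d}]\ge\rho_i\,n^{1-t\delta+o(1)}$, where $N_{i,d}$ counts community-$i$ vertices with profile $d$ and $1-t\delta>0$ because $t<1/\delta$. For concentration I would bound $\mathrm{Var}(N_{i,d})$ by decoupling: for distinct $u,v$ in community $i$, conditioning on the status of the single shared pair $\{u,v\}$ makes the remaining edges at $u$ and at $v$ independent, which gives $\mathrm{Cov}(\mathbb{1}\{d(u)=d\},\mathbb{1}\{d(v)=d\})=n^{-1+o(1)}\PR(d(v)=d)^2$ and hence $\mathrm{Var}(N_{i,d})=O(\E[N_{i,d}])+n^{-1+o(1)}\E[N_{i,d}]^2$. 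Chebyshev then makes $N_{i,d}\ge\tfrac12\E[N_{i,d}]$ fail with probability $n^{-(1-t\delta)+o(1)}$; a union bound over $i$ and the $O((\log n)^{2k})$ relevant $d$ shows that whp $N_{i,d}\ge n^c$ simultaneously for all of them, for any fixed $c\in(0,1-t\delta)$. The genuinely delicate point is making the key estimate uniform over the whole relevant range of $d$: one must control the Stirling error when coordinates of $d$ are small — near the boundary of the positive orthant, where $\eta_i$ still behaves (thanks to $0\ln 0=0$) but the Poisson point mass has a different shape — and verify that the multiplicative errors from replacing $n_r$ by $n\rho_r$, from the multinomial-to-Poisson passage, and from Stirling all accumulate to only $n^{o(1)}$. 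Everything else is a routine application of the first and second moment methods.
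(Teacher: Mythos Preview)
Your proposal is correct. Your key estimate and your argument for Part~(2) are essentially identical to the paper's. For Part~(1), however, you take a genuinely different route.

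The paper avoids computing covariances altogether: it picks a random subset $S_i$ of $2n/\log^2 n$ community-$i$ vertices, passes to the subset $S_i'\subset S_i$ of vertices with no revealed edge inside $S_i$ (which still has size $\ge n/\log^2 n$ whp), and observes that the degree profiles of vertices in $S_i'$ are \emph{exactly} conditionally independent. The count $|\{v\in S_i':d(v)=d\}|$ is then Binomial with mean $\ge n^{1-t\delta+o(1)}$, so standard binomial concentration gives failure probability $\exp(-\Theta(n^c))$, and a union bound over the polylog-many relevant $d$ is immediate. Your second-moment approach is equally valid: your covariance bound $\mathrm{Cov}=O((\log n/n)\,p^2)$ is correct (conditioning on the single shared edge $\{u,v\}$ and noting that the conditional point probabilities differ from the unconditional one by at most a constant factor), and Chebyshev gives a failure probability $n^{-(1-t\delta)+o(1)}$, which still survives the polylog union bound. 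The paper's trick is a bit cleaner and gives exponential rather than polynomial concentration, but since only $O((\log n)^{2k})$ profiles are in play, your polynomial rate suffices. The delicate uniformity issue you flag for small coordinates of $d$ is real and is handled the same way in both approaches (the polylog Stirling corrections are absorbed into the $n^{o(1)}$).
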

In order to prove this lemma, we need the Poisson approximation result stated below. The proof of this follows from a straightforward application of Stirling's approximation and will therefore be provided in Appendix~\ref{sec:appendix-poisson}.
\begin{lemma}
\label{fact:stirling}
Let $(S_r)_{r\in [k]}$ be a partition of $[n]$ such that $|S_r| = n \rho_r (1+O(\log^{-2} n ))$ for all $r\in [k]$, where $\rho \in (0,1)^k$. 
Suppose that $\{W_v\}_{v=1}^{n}$ is i.i.d.~from a distribution taking values in $\{a,b,c\}$ and, if $v\in S_r$, $\mathbb{P}(W_v = a) = \alpha \psi_r$, $\mathbb{P}(W_v = b) = \alpha (1-\psi_r)$, and $\mathbb{P}(W_v = c) = 1- \alpha$. 
Fix $i \in [k]$. Also, let $V \subset S_i$ be such that $|V| = O(n/\log^2n) $. 

For $x \in\{ a,b\}$, let $D_{x,r}:= \#\{v\in S_r: W_v =x\}$ for $r \in [k] \setminus \{i\}$ and $D_{x,i}:= \#\{v\in S_i\cap V^c: W_v =x\}$. 
Let $D = (D_{a,r},D_{b,r})_{r\in [k]}$ and also let $d = (d_{1,r},d_{2,r})_{r\in [k]}\in \Z_+^{2k}$ be such that $\|d\|_1 = o(\log^{3/2} n)$ and $\alpha = t \log n /n$. Then 
\begin{align*}
\PR\left(D= d\right) \asymp \prod_{r = 1}^k &P\left(\rho_r\psi_r t\log n ; d_{1,r}\right) P\left(\rho_r(1-\psi_r) t\log n ; d_{2,r}\right),
\end{align*}
where $P(\lambda; m)$ is the probability that a $\mathrm{Poisson}(\lambda)$ random variable takes value $m$.
\end{lemma}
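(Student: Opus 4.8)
\textbf{Plan for Lemma~\ref{DRmeaning}.} The proof will combine the Poisson approximation of Lemma~\ref{fact:stirling} with a large-deviations-style estimate for Poisson probabilities, showing that the quantity $\eta_i$ is precisely the (normalized) negative log-probability of observing a given degree profile. The first step is to apply Lemma~\ref{fact:stirling} with the identifications $S_r = \{v : \sigma_0(v) = r\}$, $W_v$ = edge status of $\{v,w\}$ for the fixed vertex $w$ under consideration, $\alpha = t\log n/n$, and $\psi_r = P_{ri}$ when the vertex $w$ is in community $i$; the set $V$ absorbs the $O(\log^2 n)$ vertices we need to exclude so that the conditioning on $\sigma_0$ near its expectation (via~\eqref{eq:concentration} with $\varepsilon = n^{-1/3}$) is compatible with the hypothesis $|S_r| = n\rho_r(1+O(\log^{-2}n))$. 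This reduces computing $\PR(d(w) = d)$ to a product of Poisson point masses $\prod_r P(\rho_r P_{ri} t\log n; d_{+r}) P(\rho_r(1-P_{ri})t\log n; d_{-r})$ up to a $1+o(1)$ factor, valid for $\|d\|_1 = o(\log^{3/2}n)$.

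\textbf{Step 2: Poisson tail asymptotics.} For $\lambda = ct\log n$ and $d = xt\log n$ with $x>0$ a fixed constant, Stirling's formula gives $\ln P(\lambda; d) = -\lambda + d\ln\lambda - \ln(d!) = -t\log n\,\big[x\ln(x/(\e c))\big] + O(\log\log n)$. Summing over the $2k$ coordinates, one obtains
\begin{equation*}
\ln \PR(d(w) = d) = -t\log n\,\big(\eta_i(x) - 1\big) + o(\log n),
\end{equation*}
where $\eta_i$ is exactly the dissonance function~\eqref{def:eta} (the ``$+1$'' in the definition of $\eta_i$ and the additive $-1$ here are arranged so that, after multiplying by $t$, the exponent of $n$ is $1 - t\eta_i(x)$). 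Since the number of vertices in community $i$ is $\asymp n\rho_i$, the expected number of community-$i$ vertices with profile $d$ is $n^{1 - t\eta_i(x) + o(1)}$.

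\textbf{Step 3: Proving Part~\ref{DRmeaning-1}.} If $x = d/(t\log n) \in \DR_\delta(i)$, then $\eta_i(x) \le \delta < 1/t$, so the expected count is $n^{c'+o(1)}$ with $c' := 1 - t\delta > 0$. A second-moment computation — using that the degree profiles of distinct vertices, conditioned on $\sigma_0$, have negligible correlation because each edge is shared by at most two vertices — shows the count concentrates, so with probability $1 - o(1)$ there are at least $n^c$ such vertices for any fixed $c < c'$. One then takes a union bound over the finitely many $i \in [k]$ and the polynomially-many relevant profiles $d$ (those with $\|d\|_1 = O(\log n)$), which is harmless. For Part~\ref{DRmeaning-2}: if some vertex $w$ in community $i$ had profile $d = xt\ln n$ with $x \notin \DR_{\delta'}(i)$, then either $\|d\|_1$ is too large (ruled out directly, since the total degree of any vertex is $O(\log n)$ whp by a Chernoff bound, forcing $\|x\|_1 = O(1)$ and in particular confining $x$ to a compact set on which $\eta_i$ is bounded) or $\eta_i(x) > \delta'$, whence $\PR(d(w) = d) \le n^{-(t\delta' - 1) + o(1)} = n^{-(t\delta'-1)+o(1)}$; since $t > 1/\delta'$ this is $n^{-\Omega(1)}$, and a union bound over all $n$ vertices and all $O(\log n)$-sized profiles shows no such vertex exists whp. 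Here the closedness/boundedness of $\DR_{\delta'}(i)$ from Lemma~\ref{DRproperties}, together with its ``fattening'' property, is used to pass from ``$\eta_i(x) \le \delta'$'' to a genuinely open neighborhood and to ensure the dichotomy above is exhaustive.

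\textbf{Main obstacle.} The delicate point is controlling the $o(\log n)$ error terms uniformly over the polynomially-many profiles $d$ simultaneously, and making the second-moment concentration in Step 3 rigorous: the degree profiles are not independent across vertices, and one must verify that the pairwise covariance contribution is lower-order — this is where the sparsity ($t\log n/n$ revelation probability) and the exclusion set $V$ in Lemma~\ref{fact:stirling} do the essential work, allowing us to treat the profile of $w$ as essentially a function of edges disjoint from those determining the profile of $w'$. Packaging the Stirling estimate into a clean statement that survives the union bound is the bulk of the technical effort; the rest is bookkeeping.
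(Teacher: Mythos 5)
Your proposal does not actually prove the statement in question. The statement is Lemma~\ref{fact:stirling} itself --- the assertion that the law of the degree-profile vector $D$, which is exactly a product over $r$ of multinomial probabilities, is asymptotically equal to the stated product of Poisson point masses. Your write-up is instead a plan for Lemma~\ref{DRmeaning}: in your ``first step'' you \emph{invoke} Lemma~\ref{fact:stirling} as a known tool and then build on it. The only place Stirling's formula appears in your argument is in estimating the Poisson mass $P(\lambda;d)$ after the approximation has already been granted, which is a downstream computation (it is the chain of equalities in \eqref{cond-prob-simpl} of the paper), not a proof of the approximation.

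What is actually required, and what the paper does in Appendix~\ref{sec:appendix-poisson}, is the following. First, $(D_{a,r},D_{b,r})$ are independent across $r$ since they are functions of disjoint sets of the $W_v$, so $\PR(D=d)$ factorizes exactly into multinomial terms
\[
\frac{|S_r|!}{d_{1,r}!\,d_{2,r}!\,(|S_r|-d_{1,r}-d_{2,r})!}\,(\alpha\psi_r)^{d_{1,r}}(\alpha(1-\psi_r))^{d_{2,r}}(1-\alpha)^{|S_r|-d_{1,r}-d_{2,r}},
\]
with $|S_r|$ replaced by $|S_i\setminus V|$ when $r=i$. Then one shows $\frac{|S_r|!}{(|S_r|-d_{1,r}-d_{2,r})!}\asymp (n\rho_r)^{d_{1,r}+d_{2,r}}$ via Stirling, and $(1-\alpha)^{|S_r|-d_{1,r}-d_{2,r}}\asymp \e^{-t\rho_r\log n}$; reassembling gives the Poisson product. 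This is where the two quantitative hypotheses are consumed: $\|d\|_1=o(\log^{3/2}n)$ guarantees $(n\rho_r(1+O(\log^{-2}n)))^{d_{1,r}+d_{2,r}}\asymp(n\rho_r)^{d_{1,r}+d_{2,r}}$ (the error is $\exp(o(\log^{3/2}n)\cdot O(\log^{-2}n))\to 1$) and likewise controls the lower-order Stirling corrections, while $|V|=O(n/\log^2 n)$ ensures excising $V$ perturbs $|S_i|$ only within the same tolerance. None of this appears in your proposal, so as an answer to the stated problem it is a complete miss, even though the surrounding material you sketch (for Lemma~\ref{DRmeaning}) is broadly consistent with how the paper later uses the lemma.
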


\begin{proof}[Proof of Lemma~\ref{DRmeaning}]
To prove the first part, fix $i \in [k]$ and let $d\in\mathbb{Z}_+^{2k}$ be such that $d/(t\log(n))\in \DR_{\delta}(i)$. Recall that $n_j$ is the number of vertices in community $j$ for every $j\in [k]$. 
By \eqref{eq:concentration}, $\left|n_j - n\rho_j \right| \leq n^{\frac{2}{3}}$ for all $j \in [k]$, with probability $1-o(1)$. 
In the subsequent proof, we always condition on this event, even if it is not mentioned explicitly. 

In order to prove bounds on how many vertices have a given degree profile, we will want a large set of vertices whose degree profiles are independent. As such, let $S_i$ be a random set of $2n/\log^{2}(n)$ vertices in community $i$, {chosen independently from $G,\true$}. Next, let $S'_i$ be the subset of $S_i$ consisting of all vertices $v$ such that all the connections between $v$ and $S_i$ are censored. Note that the degree profiles of the vertices in $S'_i$ are independent conditioned on the number of vertices in each community.
To lower-bound the size of $S'_i$, let $X$ be the number of revealed connections among vertices in $S$. By a counting argument, $|S'_i| \geq |S_i| - 2X$. Observe that $\mathbb{E}[X] = \binom{2n/\log^{2}n}{2} \frac{t\log n}{n} = O(n/\log^{3}(n))$. The Markov inequality then implies that $X = o(|S_i|)$ with high probability, which implies $|S'_i| \geq \frac{1}{2}|S_i| = \frac{n}{\log^2(n)}$.

Let $\cF'$ denote the sigma-algebra with respect to which $S_i$ and $(n_j)_{j\in [k]}$ are measurable, and let 
\begin{align*}
\cF := &\cF' \cap \bigg\{|n_j -  n \rho_j| \leq n^{\frac{2}{3}},
\forall j\in [k]\bigg\} \\
&\cap \bigg\{|S_i'| \geq \frac{n}{\log^2(n)}\bigg\}. \end{align*}
Fix $v \in [n]$. Since $\DR_\delta(i)$ is bounded, we have that $\|d\|_1 = O(\log n)$.
Thus, by Lemma~\ref{fact:stirling},  
\begin{eq}\label{cond-prob-simpl}
    &\PR(d(v) = d \mid \cF \cap \{v \in S_i'\}) \\
    &\asymp \e^{-t\log n} \prod_{j=1}^k\frac{\big[\rho_j P_{i,j} t\log(n)\big]^{d_{+j}}}{d_{+j}!}\frac{\big[\rho_j(1-P_{i,j}) t\log(n)\big]^{d_{-j}}}{d_{-j}!}\\
    & \asymp n^{-t} \prod_{j=1}^k\frac{(\rho_j P_{i,j} t\log(n))^{d_{+j}}}{\sqrt{2\pi d_{+j}}(d_{+j}/\e)^{d_{+j}}}\frac{(\rho_j (1-P_{i,j}) t\log(n))^{d_{-j}}}{\sqrt{2\pi d_{-j}}(d_{-j}/\e)^{d_{-j}}}\\
    &= n^{-t}  \prod_{j=1}^k\frac{1}{2\pi\sqrt{d_{+j}d_{-j}}} \bigg(\frac{\e \rho_jP_{i,j} t\log(n)}{d_{+j}}\bigg)^{d_{+j}} \bigg(\frac{\e \rho_j(1-P_{i,j}) t\log(n)}{d_{-j}}\bigg)^{d_{-j}} \\
    & = \bigg(\prod_{j=1}^k\frac{1}{2\pi\sqrt{d_{+j}d_{-j}}} \bigg) n^{-t \eta_i(d/t\log n)},
\end{eq}
where 
in the final step, we have used the definition of $\eta_i$ from \eqref{def:eta}. 
Next, since $d/(t\log(n))\in \DR_{\delta}(i)$, we have that $\eta_i(d/t\log (n)) \leq \delta$, and thus \eqref{cond-prob-simpl} yields, for all sufficiently large $n$, 
\begin{align*}
p_n&:= \PR(d(v) = d \mid \cF \cap \{v \in S_i'\}) \\
&\geq C \frac{n^{-t\delta}}{\log^{k}n} +o(n^{-1}) \\
&\geq n^{-t\delta (1+o(1))},
\end{align*}
for some $C>0$.
Next, if $d'(v)$ denotes the degree profile of vertex $v$ discarding all the present and absent edges in~$S_i$, then $d(v) = d'(v)$ for all $v\in S_i'$. 
Moreover, conditionally on $\cF'$, $\{d'(v)\}_{v\in S_i'}$ are independent. 
Thus,  conditionally on $\cF$, $|\{v \in S_i' : d(v) = d\}|$ is distributed as a 
$\text{Bin}(|S_i'|, p_n)$ random variable.
Note also that, conditionally on $\cF$, $|S_i'| p_n \geq 2n^{c}$ for some $c>0$. Thus,  using concentration of binomial random variables, we conclude that 
\[|\{v \in S_i' : d(v) = d\}| \geq \frac{1}{2}~|S_i'|~p_n \geq n^c\]
with probability at least $1 - \exp(-c'n^c)$ for some $c' > 0$. Observing that $|\{d \in \mathbb{Z}_+^{2k} : d/( \log n) \in \DR_{\delta}(i)\}| = O\left(\text{polylog}(n)\right) = o(\exp(c'n^c))$, the claim follows by a union bound.

In order to prove the second part, we again use \eqref{cond-prob-simpl}. 
By the union bound and \cite[Corollary 2.4]{JLR00}, there exists a sufficiently large constant $C>0$ such that 
\begin{eq} 
\PR( \exists v\in [n]: \|d(v)\|_1 > C\log n ) = o(n^{-1}). 
\end{eq}
Now, for any $d$ such that $d/(t\log n) \notin \DR_{\delta'}(i)$ and $\|d\|_1 \leq C\log n$, we can use \eqref{cond-prob-simpl} to show that, for all sufficiently large $n$, and fixed $v \in [n]$
\begin{align*}
    \PR(d(v) = d) &\leq (1+o(1))  \bigg(\prod_{j=1}^k\frac{1}{2\pi\sqrt{d_{+,j}d_{-,j}}} \bigg) n^{-t \eta_i(d/t\log n)} \leq n^{-t\delta'} .
\end{align*}
Now, 
\begin{align*}
    &\PR(\exists v \text{ with }\true(v) = i: d(v)/(t\log n)\notin \DR_{\delta'}(i) ) \\
    &\leq n \PR(d(v) = d \text{ for some } d/(t\log n) \notin \DR_{\delta'}(i) \text{ and } \|d\|_1\leq C\log n)+ o(1)  \\
    & \leq n(C\log n)^{2k} n^{-t\delta'} + o(1) = o(1),
\end{align*}
where in the last step we have used that $t\delta'>1$. Hence the proof is complete.
\end{proof}

\subsection{Separating Degree Profiles using Hyperplanes} \label{sec:seperate-hyperlane}  
Now that we have connected the degree profiles that occur in a community with dissonance ranges relative to that community, we can start showing that the behavior of the dissonance ranges implies results on our ability to separate the communities with hyperplanes, starting with the following proposition:

\begin{proposition}\label{NonoverlapLemma}\label{NonoverlapLemma-item-2}
Let $G \sim \CSBM_n^k(\rho, P, t)$, $\mathcal{Z}\subset \R^{2k}$ be a linear subspace,
and let $\delta > 0$ be such that $t\delta<1$. 
Suppose further that there are communities $i$ and $j$ such that the projections of $\DR_{\delta}(i)$ and $\DR_{\delta}(j)$ onto $\mathcal{Z}$ overlap. 
Then, for any $m \in \mathbb{N}$, 
there exists $\varepsilon>0$ such that for any unit vectors $w_1,...,w_{m}\in \mathcal{Z}$ and $\gamma_1,...,\gamma_{m}\in\mathbb{R}$,  with probability $1-o(1)$, $[(w_r)_{r=1}^{m},(\gamma_r \log n)_{r=1}^{m}]$ confuses communities $i$ and $j$ at level $\varepsilon\ln(n)$.
\end{proposition}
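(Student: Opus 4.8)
The plan is to exhibit, with probability $1-o(1)$, a pair of vertices $u \in V_i$, $v \in V_j$ whose degree profiles $d(u), d(v)$ are \emph{simultaneously} on the same side of all $m$ hyperplanes $\{x : w_r^T x = \gamma_r \log n\}$, with a uniform slack $\varepsilon \log n$. The key observation is that both $d(u)/(t\log n)$ and $d(v)/(t\log n)$ are (asymptotically) dense in $\DR_{\delta}(i)$ and $\DR_\delta(j)$ respectively, by Lemma~\ref{DRmeaning}\eqref{DRmeaning-1} (this is where $t\delta < 1$ is used). So it suffices to find a single point $x^\circ$ lying in the \emph{interiors} (relative to $\R_+^{2k}$) of both $\DR_{\delta}(i)$ and $\DR_\delta(j)$ — or rather, a point $x^\circ$ such that a small $\ell_2$-ball around $x^\circ$, intersected with $\R_+^{2k}$, is contained in both dissonance ranges — and then pull back: pick $d(u)$ with $d(u)/(t\log n)$ close to $x^\circ$ and likewise $d(v)$.

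First I would reduce to a statement purely about $\mathcal{Z}$. By hypothesis, $\proj_{\mathcal{Z}}(\DR_\delta(i))$ and $\proj_\mathcal{Z}(\DR_\delta(j))$ overlap; pick $z^\star$ in the intersection, so $z^\star = \proj_\mathcal{Z}(a) = \proj_\mathcal{Z}(b)$ for some $a \in \DR_\delta(i)$, $b\in\DR_\delta(j)$. Since all the $w_r$ lie in $\mathcal{Z}$, we have $w_r^T a = w_r^T z^\star = w_r^T b$ for every $r$; so $a$ and $b$ already have identical weighted degree profiles against \emph{every} hyperplane, i.e. for each $r$, both $w_r^T a - \gamma_r$ and $w_r^T b - \gamma_r$ have the same sign $s_r$ (choosing $s_r$ arbitrarily if the value is $0$). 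The issue is that the slack might be $0$ — this happens exactly when $w_r^T a = \gamma_r$ for some $r$, and also $a$ or $b$ might lie on the boundary of a dissonance range, so Lemma~\ref{DRmeaning}\eqref{DRmeaning-1} does not directly apply to them. I would fix this by a perturbation argument using strict convexity / interior points: pick $\delta'' \in (\delta, 1/t)$ (possible since $t\delta<1$), note $\DR_\delta(i) \subsetneq \DR_{\delta''}(i)$ with $\DR_\delta(i)$ in the interior of $\DR_{\delta''}(i)$ by Lemma~\ref{DRproperties}, and similarly for $j$; then $a$ and $b$ lie in the interiors of $\DR_{\delta''}(i)$, $\DR_{\delta''}(j)$.

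The remaining difficulty — which I expect to be the main obstacle — is handling the boundary of the positive orthant: a point of $\R_+^{2k}$ in the interior of $\DR_{\delta''}(i)$ may still have some coordinate equal to $0$, and then arbitrarily close lattice-scaled degree profiles need not exist (degree counts are nonnegative integers). I would address this by perturbing $a$ and $b$ slightly \emph{into} the open orthant: replace $a$ by $a_\tau = (1-\tau)a + \tau c_i$ where $c_i = (\rho_r P_{ri}, \rho_r(1-P_{ri}))_{r}$ is the ``center'' of community $i$, which has all coordinates strictly positive; for small $\tau$, $a_\tau$ is still in the interior of $\DR_{\delta''}(i)$ and now has all coordinates $\geq$ some $c_0 > 0$. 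Do the same for $b$ with $c_j$, obtaining $b_\tau$. Then $\proj_\mathcal{Z}(a_\tau)$ and $\proj_\mathcal{Z}(b_\tau)$ may no longer coincide, so I would either (a) only perturb in directions orthogonal to $\mathcal{Z}$ — choosing the perturbation of $a$ and $b$ to have the same $\mathcal{Z}$-projection — or (b) absorb the resulting $O(\tau)$ mismatch into the margin, noting $|w_r^T(a_\tau - b_\tau)| = O(\tau)$ can be made far smaller than the interior slack. With $a_\tau, b_\tau$ now in the open orthant and interiors of $\DR_{\delta''}(\cdot) \subset \DR_{1/t}(\cdot)$, pick $\varepsilon > 0$ smaller than (i) half the minimum over $r$ of $|w_r^T a_\tau - \gamma_r \log n|/\log n$ over the nonzero ones, padded appropriately, and (ii) a radius so that $B_2(a_\tau, 4\varepsilon)\cap\R_+^{2k}\subset\DR_{\delta}(i)$ — wait, one must be careful: Lemma~\ref{DRmeaning}\eqref{DRmeaning-1} gives density in $\DR_\delta(i)$, so I should run the whole argument with $\delta$ replaced at the outset by a slightly larger $\tilde\delta < 1/t$ so that $a, b$ lie in the \emph{open} $\DR_{\tilde\delta}$, and then quote Lemma~\ref{DRmeaning}\eqref{DRmeaning-1} with parameter $\tilde\delta$. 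Finally, choose lattice points $d(u), d(v)$ with $\|d(u)/(t\log n) - a_\tau\|_2 \le \varepsilon$ and $\|d(v)/(t\log n) - b_\tau\|_2\le\varepsilon$; such vertices exist with probability $1-o(1)$ by Lemma~\ref{DRmeaning}\eqref{DRmeaning-1} (the set of admissible lattice points near $a_\tau$ is nonempty since $a_\tau$ is an interior point of $\DR_{\tilde\delta}(i)$ and stays in the open orthant). For these vertices and the chosen signs $s$, one checks $s_r(w_r^T d(u) - \gamma_r\log n) > \varepsilon\log n$ and likewise for $v$, for all $r$, completing the proof.
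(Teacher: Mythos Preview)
Your overall strategy --- enlarge $\delta$ to some $\tilde\delta\in(\delta,1/t)$ to gain interior room, push the witness points into the open orthant, then invoke Lemma~\ref{DRmeaning}\eqref{DRmeaning-1} --- is exactly the paper's approach. The gap is in the hyperplane step. You fix single points $a_\tau,b_\tau$ and propose to take $\varepsilon$ as (roughly) $\tfrac12\min_r|w_r^T a_\tau-\gamma_r|$ ``over the nonzero ones, padded appropriately''; but nothing prevents some $\gamma_r$ from equalling $w_r^T a_\tau$ exactly, and then no positive $\varepsilon$ works for that $r$. Your option (b) fails for the same reason: the margin from the hyperplanes is not under your control and can be smaller than any preassigned $\tau$, so the $O(\tau)$ projection mismatch cannot always be absorbed. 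Option (a), perturbing only in $\mathcal{Z}^\perp$, need not push you into the open orthant (take $\mathcal{Z}=\R^{2k}$, where $\mathcal{Z}^\perp=\{0\}$), so neither fix is complete as stated.

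The paper sidesteps both issues by working with an open set rather than a single point. It shifts the common projection $z_0$ to $z_0'=z_0+\proj_{\mathcal{Z}}(\mu_0,\dots,\mu_0)$ (this is the orthant fix, and because the \emph{same} shift is applied on both sides it preserves the common-projection property automatically), and then shows that the entire ball $B_2(z_0',\mu_0)\cap\mathcal{Z}$ lies in $\proj_{\mathcal{Z}}(\DR_{\delta'}(i))\cap\proj_{\mathcal{Z}}(\DR_{\delta'}(j))$. Given any $m$ hyperplanes, one now simply picks a sub-ball $B$ inside one of the open cells they cut out of this ball; the signs $s_r$ and the margin $\varepsilon$ come for free from the cell. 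Finally one lifts: for large $n$ there are lattice points $x_it\log n, x_jt\log n\in\Z_+^{2k}$ with $x_i\in\DR_{\delta'}(i)$, $x_j\in\DR_{\delta'}(j)$ and $\proj_{\mathcal{Z}}(x_i),\proj_{\mathcal{Z}}(x_j)\in B$, and Lemma~\ref{DRmeaning} produces the vertices. Replacing your single-point perturbation by this ball-in-the-projection argument closes the gap.
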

\begin{proof}
Let $z_0 \in \text{Proj}_{\mathcal{Z}}(\DR_{\delta}(i)) \cap \text{Proj}_{\mathcal{Z}}(\DR_{\delta}(j))$. There must exist $z_i\in \DR_\delta(i)$ and $z_j\in \DR_\delta(j)$ such that $\text{Proj}_{\mathcal{Z}}(z_i)=\text{Proj}_{\mathcal{Z}}(z_j)=z_0$. Now, let $\delta' = \frac{1}{2}\left(\delta + \frac{1}{t}\right)$, so that $\delta < \delta' < \frac{1}{t}$. By Lemma~\ref{DRproperties}, there exists $\mu > 0$ such that $B_2(\DR_{\delta}(i), \mu)\cap \mathbb{R}_+^{2k} \subseteq \DR_{\delta'}(i)$ and $B_2(\DR_{\delta}(j), \mu)\cap \mathbb{R}_+^{2k} \subseteq \DR_{\delta'}(j)$. So, if we let $\mu_0=\mu/3k$ and $z'_0=z_0+\text{Proj}_{\mathcal{Z}}(\mu_0,\mu_0,...,\mu_0)$ then 
\begin{align*}
B_2(z'_0,\mu_0) \cap \mathcal{Z}&=\text{Proj}_{\mathcal{Z}}(B_2(z_i+(\mu_0,...,\mu_0),\mu_0))\\
&\subseteq \text{Proj}_{\mathcal{Z}}(B_2(z_i,\mu)\cap\mathbb{R}_+^{2k})\\
&\subseteq \text{Proj}_{\mathcal{Z}}(\DR_{\delta'}(i))
\end{align*}
By the same logic, $B_2(z'_0,\mu_0) \cap \mathcal{Z}\subseteq \text{Proj}_{\mathcal{Z}}(\DR_{\delta'}(j))$.

Now, let $S_{\overline{d}}$ be the volume of a unit ball in $\overline{d}$ dimensions, and $d$ be the dimensionality of~$\mathcal{Z}$. Note that $B_2(z'_0,\mu_0) \cap \mathcal{Z}$ is a ball of radius $\mu_0$ in dimension $d$, and therefore it has volume $S_d\mu_0^d$.
For any hyperplane and any $\varepsilon_0>0$, the region of $B_2(z'_0,\mu_0) \cap \mathcal{Z}$ that is within $\varepsilon_0$ of the hyperplane has a volume strictly less than $2\varepsilon_0 S_{d-1}\mu_0^{d-1}$. Fix $m \in \mathbb{N}$ and set $\varepsilon_0 =  \mu_0 S_d/(2 m S_{d-1})$. It follows that for any unit vectors $w_1, \dots, w_m \in \mathcal{Z}$, and any $\gamma_1, \dots, \gamma_m \in \mathbb{R}$, the region of $B_2(z'_0,\mu_0) \cap \mathcal{Z}$ that is within $\varepsilon_0$ of \emph{any} hyperplane of the form $\{x : w_r^T x = \frac{\gamma_r}{t}\}$ has volume that is strictly less than $m \cdot 2\varepsilon_0 S_{d-1}\mu_0^{d-1} = \text{Vol}\left(B_2(z'_0,\mu_0) \cap \mathcal{Z} \right)$. 
Therefore, there exists a point $x_0\in B_2(z'_0,\mu_0) \cap \mathcal{Z}$ such that \[ \left|w_r^T x_0 - \frac{\gamma_r}{t}\right| > \varepsilon_0\] for all $1 \leq r \leq m$. So, there exists an open ball $B \subseteq B_2(z'_0,\mu_0)\cap\mathcal{Z}$, and $s \in \{-1,1\}^m$ such that for all $x \in B$, 
\[s_r \left(w_r^T x - \frac{\gamma_r}{t}\right) > \frac{\varepsilon_0}{2}\] for all $1 \leq r \leq m$. In other words, the open ball $B$ is separated from all hyperplanes defined by $(w_r, \frac{\gamma_r}{t})_{r=1}^m$.

Now, observe that for all sufficiently large $n$ there exist $x_i\in \DR_{\delta'}(i)\cap \text{Proj}^{-1}_{\mathcal{Z}}(B)$ and $x_j\in \DR_{\delta'}(j)\cap \text{Proj}^{-1}_{\mathcal{Z}}(B)$ such that $x_it\ln(n),x_jt\ln(n)\subseteq\mathcal{Z}_+^{2k}$. Since $t \delta' < 1$, by Lemma \ref{DRmeaning}, with probability $1-o(1)$, there exist vertices $u \in V_i$ and $v \in V_j$ such that $d(u)/(t\log(n))=x_i$ and $d(v)/(t\log(n)) = x_j$. By the above, we have
\[s_r \left(w_r^T  \frac{d(u)}{t\log n} - \frac{\gamma_r}{t} \right) = s_r \left(w_r^T \frac{d(v)}{t \log n} - \frac{\gamma_r}{t}\right) > \frac{\varepsilon_0}{2}\]
for all $1 \leq r \leq m$. Multiplying through by $t\log n$ and taking $\varepsilon = \frac{t\varepsilon_0}{2}$, we conclude that $[(w_r)_{r=1}^m, (\gamma_r \log n)_{r=1}^m]$ confuses communities $i$ and $j$ at level $\varepsilon \log n$.
\end{proof}

\subsection{When Dissonance Ranges Barely Overlap.} \label{sec:dis-overlap}
At this point, the key question is what hyperplanes can separate the rescaled degree profiles from different communities. 
In order to answer that, we consider the ``hardest'' case where $t =t_0= 1/\Delta_+(\theta_i,\theta_j)$, with  $\theta_i$ defined by \eqref{eq:t_c}. 
Recall that $w^\star$ is defined so that
\begin{eq}
w^\star = \bigg(\log \frac{P_{ri}}{P_{rj}}, \log \frac{1-P_{ri}}{1-P_{rj}} \bigg)_{r\in [k]}.
\end{eq}
Below, we show that the hyperplane orthogonal to $w^\star$ almost separates the dissonance ranges even for $t = t_0$. We also set up additional properties that will help us to show that a hyperplane orthogonal to $w\neq w^\star$ cannot separate the dissonance ranges just above $t_0$, and also to establish the impossibility of exact recovery (Theorem~\ref{theorem:impossibility}) below $t_0$.

\begin{lemma}\label{DRoverlapThreshold}
Suppose that $1\le i<j\le k$ and let $t_0=1/\Delta_+(\theta_i,\theta_j)$, where $\theta_i$ is defined by \eqref{eq:t_c}. Then $\DR_{1/t_0}(i)$ and $\DR_{1/t_0}(j)$ intersect at a single point. 
Let $x^\star$ be this intersection point of $\DR_{1/t_0}(i)$ and $\DR_{1/t_0}(j)$.
Let $H :=\{x: \langle w^\star, x-x^*\rangle \geq 0\}$ be the half-space created by the hyperplane through $x^\star$ perpendicular to~$w^\star$. 
Then $\DR_{1/t_0}(i) \cap H = \DR_{1/t_0}(i)$ and $\DR_{1/t_0}(j) \cap H =  \{x^\star\}$,
i.e., the hyperplane $\{x: \langle w^\star, x-x^*\rangle = 0\}$  separates $\DR_{1/t_0}(i) \setminus\{x^\star\}$ and  $\DR_{1/t_0}(j) \setminus\{x^\star\}$. 
Also, there exists $r>0$ such that $B_2(x^\star +  r w^\star,r \|w^\star\|_2) \subset \DR_{1/t_0}(i)$ and $B_2(x^\star -  r w^\star,r\|w^\star\|_2) \subset \DR_{1/t_0}(j)$. 
For $t < t_0$, the intersection $\DR_{1/t}(i) \cap \DR_{1/t}(j)$ has a non-empty interior.
\end{lemma}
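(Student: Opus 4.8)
# Proof Proposal for Lemma~\ref{DRoverlapThreshold}

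\textbf{Overall strategy.} The plan is to exploit the Legendre-duality structure hidden in the definition of $\eta_i$. Observe that $\eta_i(x)$ is (up to the additive constant $1$ and a shift by $\mathbf{1}$) the Kullback--Leibler divergence between the measure $x$ and the reference vector $\theta_i = (\rho_r P_{ri}, \rho_r(1-P_{ri}))_{r}$; indeed, writing $x = (x_{1,r}, x_{2,r})_r$ and $\theta_i = (\theta_{i,1,r}, \theta_{i,2,r})_r$, one has $\eta_i(x) = \sum_{a,r} \big[ x_{a,r} \ln(x_{a,r}/\theta_{i,a,r}) - x_{a,r} + \theta_{i,a,r}\big] + \big(1 - \sum_{a,r}\theta_{i,a,r} + \sum_{a,r}\theta_{i,a,r}\big)$ — after checking that $\sum_{a,r}\theta_{i,a,r} = \sum_r \rho_r = 1$, this is exactly $D(x \,\|\, \theta_i) + 1$ minus $\sum x_{a,r}$ plus a constant; I will first pin down the precise identity $\eta_i(x) = D(x\|\theta_i) + 1 - \|x\|_1 + 1$ or whatever the bookkeeping yields, so that sublevel sets of $\eta_i$ become KL-balls. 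The payoff is that the one-point-intersection statement becomes the statement that the ``hardest'' tilt $\xi^\star$ achieving $\max_\xi \CH_\xi(\theta_i,\theta_j)$ produces a single common tangency point, which is precisely what the Chernoff--Hellinger divergence computes.

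\textbf{Key steps in order.} First, I would establish the KL-representation above and record that $\eta_i$ is strictly convex on the interior of $\R_+^{2k}$ (strict because $z \ln z$ is strictly convex), which already gives that $\DR_{1/t_0}(i)$ is strictly convex and hence any two such sets intersect in at most a face; combined with the explicit construction of $x^\star$ below, this yields a \emph{single} point. Second, I would write down $x^\star$ explicitly: set $x^\star_{a,r} = \theta_{i,a,r}^{\,\xi^\star}\,\theta_{j,a,r}^{\,1-\xi^\star}$ where $\xi^\star \in [0,1]$ is the maximizer in $\Delta_+(\theta_i,\theta_j)$, and verify by direct substitution that $\eta_i(x^\star) = 1 - \xi^\star\,\CH_{\xi^\star}\cdot(\text{something})$... more carefully, that $\eta_i(x^\star) = \eta_j(x^\star) = 1/t_0$; this is the algebraic heart and uses $\CH_{\xi^\star}(\theta_i,\theta_j) = \Delta_+(\theta_i,\theta_j) = 1/t_0$ together with the first-order optimality condition $\frac{d}{d\xi}\CH_\xi\big|_{\xi^\star} = 0$ (or the boundary cases $\xi^\star \in \{0,1\}$ treated separately). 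Third, for the separating-hyperplane claim: the gradient $\nabla \eta_i(x^\star)$ has $a,r$-component $\ln(x^\star_{a,r}/\theta_{i,a,r}) = (1-\xi^\star)\ln(\theta_{j,a,r}/\theta_{i,a,r})$, and similarly $\nabla\eta_j(x^\star)$ has component $-\xi^\star \ln(\theta_{j,a,r}/\theta_{i,a,r})$; hence both gradients are positive scalar multiples of the single vector with components $\ln(\theta_{i,a,r}/\theta_{j,a,r})$, which — unwinding $\theta$ — is exactly $-w^\star$ (the signs: $\ln(\rho_r P_{ri}/(\rho_r P_{rj})) = \ln(P_{ri}/P_{rj})$). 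So $\DR_{1/t_0}(i)$ and $\DR_{1/t_0}(j)$ have a \emph{common} supporting hyperplane at $x^\star$, namely the one orthogonal to $w^\star$, and convexity puts them on opposite sides: this gives $\DR_{1/t_0}(i) \subset H$ and $\DR_{1/t_0}(j)\cap H = \{x^\star\}$ after fixing the sign of $w^\star$ by checking that $\theta_i$ (which lies in $\DR_{1/t_0}(i)$ since $\eta_i(\theta_i) = 1 \le 1/t_0$, using $t_0 \le 1$... or rather one needs $1/t_0 \ge 1$, i.e. $\Delta_+(\theta_i,\theta_j)\le 1$, which I should verify or note) is on the correct side.

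\textbf{The ball and the $t<t_0$ claims.} For the interior-ball statement $B_2(x^\star + rw^\star, r\|w^\star\|_2) \subset \DR_{1/t_0}(i)$: since $\eta_i$ is $C^\infty$ and strictly convex near $x^\star$ with $\eta_i(x^\star) = 1/t_0$ and $\nabla\eta_i(x^\star) \parallel -w^\star$ (pointing strictly \emph{out} of $\DR_{1/t_0}(i)$ in the $+w^\star$ direction actually — wait, need $\langle \nabla\eta_i(x^\star), w^\star\rangle < 0$ for $x^\star + rw^\star$ to move into the sublevel set; this holds since $\nabla\eta_i(x^\star)$ is a negative multiple of $w^\star$), a second-order Taylor expansion shows $\eta_i$ decreases linearly along $+w^\star$ from $x^\star$ while growing only quadratically transversally, so a small enough ball internally tangent to the hyperplane at $x^\star$ fits inside; one just needs $x^\star$ to have all coordinates strictly positive (true, as a geometric mean of positive numbers) so we stay in the interior of $\R_+^{2k}$. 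The symmetric statement for $j$ uses $+rw^\star \mapsto -rw^\star$. Finally, for $t < t_0$: then $1/t > 1/t_0$, so $\DR_{1/t}(i) \supsetneq \DR_{1/t_0}(i)$ and likewise for $j$; more quantitatively, by Lemma~\ref{DRproperties} applied with $\delta = 1/t_0 < \delta' = 1/t$, a whole $\varepsilon$-neighborhood (intersected with $\R_+^{2k}$) of $\DR_{1/t_0}(i)$ lies in $\DR_{1/t}(i)$, and the same for $j$; since the two sets $\DR_{1/t_0}(i), \DR_{1/t_0}(j)$ share the point $x^\star$, which is interior to $\R_+^{2k}$, the ball $B_2(x^\star,\varepsilon)$ lies in both $\DR_{1/t}(i)$ and $\DR_{1/t}(j)$, giving a nonempty interior to the intersection.

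\textbf{Anticipated main obstacle.} The routine-looking but genuinely delicate step is the explicit verification that $\eta_i(x^\star) = \eta_j(x^\star) = 1/t_0$ and the handling of the \emph{boundary} maximizers $\xi^\star \in \{0,1\}$, where the first-order condition is replaced by an inequality and the ``single point'' could in principle degenerate; I expect the case $\xi^\star \in (0,1)$ to go through cleanly via $\partial_\xi \CH_\xi = 0 \Leftrightarrow \sum_{a,r}\theta_{i,a,r}^\xi\theta_{j,a,r}^{1-\xi}\ln(\theta_{i,a,r}/\theta_{j,a,r}) = \sum_{a,r}(\theta_{i,a,r}-\theta_{j,a,r})$, but I would need to argue separately (using strict convexity and the fact that $P \in (0,1)^{k\times k}$ has all entries strictly between $0$ and $1$, so no coordinate of any $\theta$ vanishes) that the intersection is still a single point when the maximum is attained at an endpoint — likely this case forces $\DR_{1/t_0}(i) \cap \DR_{1/t_0}(j) = \{\theta_j\}$ or $\{\theta_i\}$ and is handled by the same supporting-hyperplane argument with $w^\star$ having some zero or infinite... no, $w^\star$ is always finite here since all $P$-entries are in $(0,1)$, so even the endpoint case is benign. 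A secondary technical point is making sure $x^\star$ lies in $\R_+^{2k}$ and not merely its interior matters only for the ball claim, where strict positivity of all coordinates of $x^\star$ (a geometric mean of strictly positive reals) is exactly what is needed.
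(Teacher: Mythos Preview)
Your proposal is correct and follows essentially the same route as the paper: define $x^\star$ as the $\xi^\star$-geometric mean of $\theta_i,\theta_j$, verify $\eta_i(x^\star)=\eta_j(x^\star)=1/t_0$ via the first-order condition for $\xi^\star$, compute that $\nabla\eta_i(x^\star)=(\xi^\star-1)w^\star$ and $\nabla\eta_j(x^\star)=\xi^\star w^\star$ so both gradients are parallel to $w^\star$ with opposite signs, invoke strict convexity for the single-point intersection and separation claims, use a second-order Taylor expansion for the tangent balls, and appeal to Lemma~\ref{DRproperties} for the $t<t_0$ statement. Your main anticipated obstacle dissolves: the paper notes that $f(\xi)=\sum_r\rho_r\big(P_{ri}^{\xi}P_{rj}^{1-\xi}+(1-P_{ri})^{\xi}(1-P_{rj})^{1-\xi}\big)$ satisfies $f(0)=f(1)=1$ while $f(1/2)<1$ by the AM--GM inequality, so the maximizer $\xi^\star$ always lies in $(0,1)$ and the endpoint case never arises.
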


\begin{proof}Recall the definition of $\Delta_+$ and $\CH_\xi$ from Definition~\ref{def:CH-threshold}. Let $\xi^\star$ be the maximizer of \eqref{eq:CH-defn}. 
We claim that $0<\xi^\star<1$. Indeed,
\begin{align*}
    &\Delta_+(\theta_i,\theta_j) = 1 - \min_{\xi\in [0,1]} \sum_{r\in [k]} \rho_r \Big(P_{ri}^{\xi}P_{rj}^{1-\xi} + (1-P_{ri})^\xi(1-P_{rj})^{1-\xi}\Big)\\
    &=: 1-\min_{\xi\in [0,1]} f(\xi). 
\end{align*}
Now, $f(0)= f(1) = 1$, and $f(1/2) < 1$ by the inequality of arithmetic and geometric means. Therefore, the minimum of $f$ is not attained at $\{0,1\}$, which proves $0<\xi^\star<1$. 

Next, define the $2k$-dimensional vector
\begin{align*}
    x^{\star} = \left(\rho_r P_{ri}^{\xi^\star} P_{rj}^{1-\xi^\star} , \rho_r (1-P_{ri})^{\xi^\star} (1-P_{rj})^{1-\xi^\star}\right)_{r\in [k]}.
\end{align*}
Setting $\frac{\dif}{\dif \xi} \CH_\xi(\theta_i,\theta_j) \big\vert_{\xi = \xi^\star} = 0$ yields
\begin{align} 
&\sum_{r\in [k]} \bigg[\rho_rP_{ri}^{\xi^\star} P_{rj}^{1-\xi^\star} \log \frac{P_{ri}}{P_{rj}} + \rho_r (1-P_{ri})^{\xi^\star}(1-P_{rj})^{1-\xi^\star} \log \frac{1- P_{ri}}{1 - P_{rj}} \bigg] \nonumber \\
&= \langle w^\star,x^\star\rangle = 0. \label{inner-prod-zero}
\end{align}
Also recall $\eta_i$ from \eqref{def:eta}. Then, 
\begin{align*}
\eta_i(x^\star)-\eta_j(x^\star) &= \sum_{r=1}^k\bigg[ x_{1,r}^\star\ln\bigg(\frac{x_{1,r}^\star}{\e\rho_r P_{ri}} \bigg)+x_{2,r}^\star\ln\bigg(\frac{x_{2,r}^\star}{\e\rho_r (1-P_{ri})}\bigg)\bigg] \\
&- \sum_{r=1}^k\bigg[ x_{1,r}^\star\ln\bigg(\frac{x_{1,r}^\star}{\e\rho_r P_{rj}} \bigg)+x_{2,r}^\star\ln\bigg(\frac{x_{2,r}^\star}{\e\rho_r (1-P_{rj})}\bigg)\bigg]\\
& = \sum_{r=1}^k\bigg[ x_{1,r}^\star\ln\bigg(\frac{P_{rj}}{P_{ri}} \bigg)+x_{2,r}^\star\ln\bigg(\frac{1-P_{ri}}{1-P_{rj}}\bigg)\bigg]\\
&=\langle w^\star,x^\star \rangle = 0. 
\end{align*}
Therefore, 
\begin{align*}
\eta_i(x^\star)=\eta_j(x^\star)&=\xi^\star\eta_i(x^\star)+(1-\xi^\star)\eta_j(x^\star)\\
&=\sum_{r=1}^k\bigg[ x_{1,r}^\star\ln\bigg(\frac{x_{1,r}^\star}{\e\rho_{r} P_{ri}^{\xi^\star}P_{rj}^{1-\xi^\star}}\bigg) \\
& \quad \quad \quad +x_{2,r}^\star\ln\bigg(\frac{x_{2,r}^\star}{\e\rho_{r} (1-P_{ri})^{\xi^\star}(1-P_{rj})^{1-\xi^\star}}\bigg)\bigg]+1\\
&=\sum_{r=1}^k\left[ x_{1,r}^\star\ln(1/\e)+x_{2,r}^\star\ln(1/\e)\right]+1\\
&=1-\sum_{r=1}^k \big[x_{1,r}^\star+x_{2,r}^\star\big]\\
&=\sum_{r=1}^k \left[\rho_{r}-\rho_{r} P_{ri}^{\xi^\star}P_{rj}^{1-\xi^\star}-\rho_{r} (1-P_{ri})^{\xi^\star}(1-P_{rj})^{1-\xi^\star}\right]\\
&=\Delta_+(\theta_i,\theta_j).
\end{align*}
Therefore, $x^\star\in \DR_{1/t_0}(i)\cap \DR_{1/t_0}(j)$. Next, observe that 
\begin{align*}
    \nabla \eta_i(x^\star) &= \bigg(\ln\bigg(\frac{x_{1,r}^\star}{\e\rho_r P_{ri}} \bigg) + 1, \ln\bigg(\frac{x_{2,r}^\star}{\e\rho_r (1-P_{ri})}\bigg) + 1 \bigg)_{r\in [k]} \\
    & = \bigg(\ln\bigg(\frac{P_{ri}}{P_{rj}} \bigg)^{\xi^\star - 1} , \ln\bigg(\frac{1-P_{ri}}{1-P_{rj}} \bigg)^{\xi^\star - 1}  \bigg)_{r\in [k]} \\
    &= (\xi^\star - 1) w^\star. 
\end{align*}
Similarly, we also have that 
\begin{align*}
    \nabla \eta_j(x^\star) &= \bigg(\ln\bigg(\frac{x_{1,r}^\star}{\e\rho_r P_{rj}} \bigg) + 1, \ln\bigg(\frac{x_{2,r}^\star}{\e\rho_r (1-P_{rj})}\bigg) + 1 \bigg)_{r\in [k]} \\
    & = \bigg(\ln\bigg(\frac{P_{ri}}{P_{rj}} \bigg)^{\xi^\star } , \ln\bigg(\frac{1-P_{ri}}{1-P_{rj}} \bigg)^{\xi^\star }  \bigg)_{r\in [k]} = \xi^\star  w^\star. 
\end{align*}
By convexity of $\eta_i$ and $\eta_j$, for any $x\in [0,\infty)^{2k}$, we have that 
\begin{align} 
&\eta_i(x)\ge \eta_i(x^\star)+\langle x-x^\star, \nabla \eta_i(x^\star)\rangle \nonumber \\ 
&=\frac{1}{t_0}+\langle x-x^\star,  (\xi^\star-1)w^\star\rangle \label{eq:eta_i-ineq}
\end{align}
and 
\begin{align}
&\eta_j(x)\ge \eta_j(x^\star)+\langle x-x^\star, \nabla \eta_j(x^\star)\rangle \nonumber \\
&=\frac{1}{t_0}+\langle x-x^\star,  \xi^\star w^\star\rangle.\label{eq:eta_j-ineq}
\end{align}
In \eqref{eq:eta_i-ineq} and \eqref{eq:eta_j-ineq}, equality can hold only at $x=x^\star$ since $\eta_i$ and $\eta_j$ are strictly convex. 
Now, for any $x \in \DR_{1/t_0} (i)$, we have $\eta_i(x) \leq 1/t_0$. Thus, \eqref{eq:eta_i-ineq} implies that $ (\xi^\star - 1) \langle x - x^\star, w^\star\rangle \leq 0$, in which case, we must have $ \langle x - x^\star, w^\star\rangle \geq 0$ for all $x \in \DR_{1/t_0} (i)$, and therefore $\DR_{1/t_0} (i) \subset H$.
Moreover,~\eqref{eq:eta_j-ineq} implies that $ \xi^\star \langle x - x^\star, w^\star\rangle \leq 0$, and since $0<\xi^\star<1$, the equality holds if and only if $x = x^\star$. Therefore, $\DR_{1/t_0} (j) \cap H = \{x^\star\}$, which proves the first part of the claim.

Next, observe that by continuity of the second derivatives of $\eta_i$ and $\eta_j$, there must exist $r_0, c>0$ such that for all $x$ with $||x-x^\star||_2\le r_0$,
\begin{align*} 
\eta_i(x)&\le \eta_i(x^\star)+\langle x-x^\star, \nabla \eta_i(x^\star)\rangle+c\|x-x^\star\|_2^2\\
&=\frac{1}{t_0}+\langle x-x^\star,  (\xi^\star-1)w^\star\rangle +c\|x-x^\star\|_2^2\\
&=\frac{1}{t_0}+c\|x-x^\star+(\xi^\star-1)w^\star/2c\|_2^2\\
& \quad \quad -\|(\xi^\star-1)w^\star\|_2^2/4c\\
&\le \frac{1}{t_0},
\end{align*}
for  $\|x-x^\star+(\xi^\star-1)w^\star/2c\|_2\le \|(\xi^\star-1)w^\star\|_2/2c$, and 
\begin{align*} 
\eta_j(x)&\le \eta_j(x^\star)+\langle x-x^\star, \nabla \eta_j(x^\star)\rangle+c\|x-x^\star\|_2^2\\
&=\frac{1}{t_0}+\langle x-x^\star,  \xi^\star w^\star\rangle+c\|x-x^\star\|_2^2\\
&=\frac{1}{t_0}+c||x-x^\star+\xi^\star w^\star/2c||_2^2-||\xi^\star w^\star||_2^2/4c\\
&\le \frac{1}{t_0}, 
\end{align*}
for  $\|x-x^\star+\xi^\star w^\star/2c\|_2\le \|\xi^\star w^\star\|_2/2c$.
In order to ensure that $\eta_i(x), \eta_j(x) \leq 1/t_0$, set $r=\min(r_0/\|w^\star\|_2,\xi^\star/c,(1-\xi^\star)/c)/2$. The ball of radius $r||w^\star||_2$ centered on $x^\star-r w^\star$ is completely contained in $\DR_{1/t_0}(j)$ and the ball of radius $r||w^\star||_2$ centered on $x^\star+r w^\star$ is completely contained in $\DR_{1/t_0}(i)$, as desired.

Finally, for $t < t_0$, observe that   $B_2(\DR_{1/t_0}(i),\tilde{\varepsilon}) \subset \DR_{1/t}(i)$ for some $\tilde{\varepsilon}>0$, and thus $x^\star$ is in the interior of $\DR_{1/t}(i)$. 
Similarly,  $x^\star$ is in the interior of $\DR_{1/t}(j)$.
Therefore, the intersection $\DR_{1/t}(i) \cap \DR_{1/t}(j)$ has a non-empty interior. 
\end{proof}

\subsection{A Necessary and Sufficient Condition for Optimal Recovery.} \label{sec:dis-optimal-recovery}
Finally, we combine the results of the above sections to prove Proposition \ref{prop:seperation-verification}. Recall the notions of separating communities and confusing communities from Definitions~\ref{defn:separate} and ~\ref{defn:confuse}. 

\begin{proof}[Proof of Proposition~\ref{prop:seperation-verification}]
To prove the first part, define $t_0=1/\Delta_+(\theta_i,\theta_j)$, so that $t_0 < t$. By Lemma~\ref{DRoverlapThreshold}, there exists~$x^\star$ such that $\DR_{1/t_0}(i)\cap \DR_{1/t_0}(j)=\{x^\star\}$. Additionally, the hyperplane $\{x : \langle w^{\star} ,x - x^{\star}\rangle = 0\}$ separates $\DR_{1/t_0}(i)$ and $\DR_{1/t_0}(j)$. Note that by \eqref{inner-prod-zero}, the hyperplane is equivalently written as $\{x : \langle w^{\star}, x \rangle = 0\}$. 
Thus, for all $x \in \DR_{1/t_0}(i)$, we have $\langle w^{\star} ,x \rangle {\geq} 0$, while for all $x \in \DR_{1/t_0}(j)$, we have $\langle w^{\star} ,x \rangle {\leq} 0$.

Since $\DR_{1/t_0}(i)$ and $\DR_{1/t_0}(j)$ are both closed, convex sets, $x^\star$ is neither in the interior of $\DR_{1/t_0}(i)$ nor in the interior of $\DR_{1/t_0}(j)$. 
Fix some $\delta \in(\frac{1}{t}, \frac{1}{t_0})$.
By Lemma \ref{DRproperties}, there exists $\varepsilon'>0$ such that $B_2(\DR_{\delta}(i),\varepsilon') \subset \DR_{1/t_0}(i)$. Therefore, we can conclude that $x^\star \notin \DR_{\delta}(i)$. Similarly  $x^\star \notin \DR_{\delta}(j)$.
Hence,  $\DR_{\delta}(i) \cap \DR_{\delta}(j) = \varnothing$.
Also, since $\DR_{1/t_0}(i)\backslash\{x^\star\} \subset \{x:\langle w^{\star} ,x \rangle > 0\}$, and $\DR_{1/t_0}(j)\backslash\{x^\star\} \subset \{x:\langle w^{\star} ,x \rangle < 0\}$, we can conclude that the hyperplane $\{x:\langle w^{\star} ,x \rangle= 0\}$ separates $\DR_{\delta}(i)$ and $\DR_{\delta}(j)$. Since dissonance ranges are closed by Lemma~\ref{DRproperties}, there exists $\varepsilon > 0$ such that for any $x^{\sss (i)} \in \DR_{\delta}(i)$ and  $x^{\sss (j)} \in \DR_{\delta}(j)$, we have
\begin{align*}
\langle w^{\star}, x^{\sss (i)} \rangle> \frac{\varepsilon}{2t} \quad \text{and}\quad 
\langle w^{\star}, x^{\sss (j)}\rangle < -\frac{\varepsilon}{2t}.
\end{align*}
By Lemma \ref{DRmeaning}, $d(u)/(t\ln(n)) \in \DR_{\delta}(i)$ for every $u \in V_i$ with probability $1-o(1)$. Similarly, $d(v)/(t\ln(n)) \in \DR_{\delta}(j)$ for every $v \in V_j$ with probability $1-o(1)$. Therefore, with probability $1-o(1)$, we have that for all $u \in V_i$ and $v \in V_j$,
\begin{align*}
\langle w^{\star}, d(u) \rangle > \frac{\varepsilon}{2} \log(n) \quad \text{and} \quad 
\langle w^{\star}, d(v)\rangle < -\frac{\varepsilon}{2} \log(n).
\end{align*}
We conclude that $w^{\star}$ separates communities $i$ and $j$ with margin $\varepsilon \log n$ with high probability.

Next, suppose that $w^{\star} \not\in \mathcal{Z}$.
By Lemma \ref{DRoverlapThreshold}, there exists $r>0$ such that 
$B_2(x^\star+rw^\star, r\|w^\star\|_2) \subset \DR_{1/{t_0}}(i)$ and $B_2(x^\star-rw^\star, r\|w^\star\|_2) \subset \DR_{1/{t_0}}(j)$. 
Next, let $w'$ be the projection of $w^\star$ onto $\mathcal{Z}$. The fact that $w^\star\notin \mathcal{Z}$ implies that $w^\star-w'\neq 0$ and $\|w'\|_2<\|w^\star\|_2$.
Let $x^{\sss (i)} = x^\star + r(w^{\star}-w')$ and $x^{\sss (j)}= x^\star-r(w^{\star}-w')$.
We claim that there exists a sufficiently small $r'>0$ such that 
\begin{align}\label{ball-inclusion-DR}
    B_2(x^{\sss (i)}, r') \subset \DR_{1/t_0}(i) \text{ and } B_2(x^{\sss (j)}, r') \subset \DR_{1/t_0}(j). 
\end{align}
Indeed, take $y \in B_2(x^{\sss (i)}, r')$. Then, 
\begin{align*}
    \|y - (x^\star+rw^\star) \|_2 \leq r' + r\|w'\|_2.
\end{align*}
Since $\|w'\|_2<\|w^\star\|_2$, we can pick $r'$ such that $ \|y - (x^\star+rw^\star) \|_2 \leq r\|w^\star\|_2$, and therefore $B_2(x^{\sss (i)}, r') \subset B_2(x^\star+rw^\star, r\|w^\star\|_2) \subset \DR_{1/t_0}(i)$. The second conclusion of \eqref{ball-inclusion-DR} follows similarly.

By \eqref{ball-inclusion-DR}, since $x^{(i)}$ and $x^{(j)}$ lie in interiors of $\DR_{1/t_0}(i)$ and $\DR_{1/t_0}(j)$ respectively, there exists $\mu>0$ such that, for any $t<t_0+\mu$, $x^{\sss (i)}$ and $x^{\sss (j)}$ also lie in interiors of $\DR_{1/t}(i)$ and $\DR_{1/t}(j)$ respectively. Note that $\proj_{\mathcal{Z}}(x^{\sss (i)}) = \proj_{\mathcal{Z}}(x^{\sss (j)})$, therefore the projections $\DR_{1/t}(i)$ and $\DR_{1/t}(j)$ onto $\mathcal{Z}$ overlap. The desired conclusion follows by Proposition \ref{NonoverlapLemma}.
\end{proof}

\section{Achievability and Impossibility} \label{sec:achieve-impossible}
Let us define the Maximum A Posteriori (\textsc{MAP}) estimator, which is the optimal estimator of $\true$. \footnote{{Here, the MAP estimator is optimal because it minimizes the 0-1 loss; that is, it minimizes $\mathbb{P}(\hat{\sigma} \neq \sigma_0)$ over all estimators $\hat{\sigma}$.}} Given a realization $G$ of the censored graph, the \textsc{MAP} estimator outputs $\map \in \argmax_{\sigma} \mathbb{P}(\true = \sigma \mid  G )$, choosing uniformly at random from the argmax set.
In this section, we start by proving Theorem~\ref{theorem:impossibility}, which is essentially equivalent to showing that $\map$ does not succeed in exact recovery for $t<t_c$. Next we prove that the estimator $\map$ always succeeds for $t>t_c$. This shows the statistical achievability for the exact recovery problem. 

{\begin{remark} 
Following the original posting of this paper to arXiv, we came to know of an earlier work of Yun and Proutiere~\cite{YP16}, which establishes the information-theoretic threshold for a general class of labeled stochastic block models. Theorems~\ref{theorem:impossibility} and \ref{theorem:achievability-general} can be obtained by verifying the conditions of \cite[Theorem 3]{YP16}. In more detail, the positive direction of \cite[Theorem 3]{YP16} shows that an iterative spectral algorithm recovers the communities with high probability above the threshold. The negative direction rules out the existence of an algorithm that succeeds with high probability, which is weaker than Theorem \ref{theorem:impossibility}. However, the negative result can be strengthened, using some intermediate results found in the proof of \cite[Theorem 3]{YP16}, to say that below the threshold, any algorithm fails to recover the communities with high probability. The results of Yun and Proutiere are stated in terms of another divergence quantity, which is asymptotically related to the CH divergence as per \cite[Claim 4]{YP16}.\\
We include our original proofs for completeness. Theorem~\ref{theorem:impossibility} is a straightforward consequence of the machinery developed in Section~\ref{sec:geometric}, and the proof of Theorem~\ref{theorem:achievability-general} is non-algorithmic, instead directly analyzing the MLE.
\end{remark}}

\subsection{Impossibility}
\begin{proof}[Proof of Theorem \ref{theorem:impossibility}]
Recall that we have  $t<t_c$ in this case where $t_c$ is given by \eqref{eq:t_c}. Fix $i<j$ such that $t<t_0 = 1/\Delta_+(\theta_i,\theta_j)$. Using the final conclusion of Lemma~\ref{DRoverlapThreshold}, we have that $\DR_{1/t} (i) \cap \DR_{1/t} (j)$ contains an open ball. By Lemma~\ref{DRmeaning}~\eqref{DRmeaning-1}, there exists $d\in \Z_+^{2k}$ and $L_n=n^{\Omega(1)}$ such that $d/(t\log n) \in \DR_{1/t} (i) \cap \DR_{1/t} (j)$ and with probability $1-o(1)$ there are $L_n$ vertices from Community $i$ and $L_n$ vertices from Community $j$, all with degree profile $d$. Let $S_i$ and $S_j$ respectively denote these two sets. Note that, if $u \in S_i$ and $v \in S_j$ is such that $\{u,v\}$ is censored, then swapping the labels of $u$ and $v$ leads to an equiprobable community labeling. 

Next we will show that with high probability, there exists $\overline{S}_i \subset S_i$ with $|\overline{S}_i| \geq \frac{L_n}{2}$ and an injective mapping $f : \overline{S}_i \to S_j$ such that for every $u \in \overline{S}_i$, the observation between $u$ and $f(u)$ is censored. In other words, we will show that there are at least $\frac{L_n}{2}$ non-intersecting pairs of vertices, one in~$S_i$ and the other in $S_j$, with censored observations. In order to construct the pairings, let $f$ be a uniformly chosen bijection between $S_i$ and $S_j$. Enumerating the edges in the matching as $\{1, 2, \dots, L_n\}$, let $X_e$ be the indicator that the $e\text{-th}$ such edge corresponds to a non-censored observation. We will bound $\sum_{e=1}^{L_n} X_e$ by applying Markov's inequality. 

In order to bound $\mathbb{E}[X_e]$, consider the background Erd\H{o}s--R\'enyi graph with parameter $\frac{t\log n}{n}$, where edges in the background graph correspond to edges and non-edges in $G$, and where non-edges in the background graph correspond to censored observations in $G$. 
Applying \cite[Corollary 2.4]{JLR00}, with high probability, the maximal degree in the background graph is $7t \log n$.
Let $E_{\text{deg}}$ denote this event. It follows that $\mathbb{E}[X_e \mid E_{\text{deg}}] \leq \frac{7t \log n}{L_n}$, since the $e{\text{-th}}$ edge is chosen uniformly at random from $S_i \times S_j$. Thus, Markov's inequality implies that
\begin{align*}
\mathbb{P}\left(\sum_{e=1}^{L_n} X_e \geq \frac{L_n}{2}~\bigg\vert~ E_{\text{deg}}\right) &\leq \frac{7t \log n}{L_n/2} = o(1).
\end{align*}
Since $\mathbb{P}(E_{\text{deg}}) = 1-o(1)$, it follows that $\mathbb{P}\left(\sum_{e=1}^{L_n} X_e \geq \frac{L_n}{2} \right) = o(1)$. Hence, the desired $\overline{S}_i$ and~$f$ exist.

Recall that $\hat{\sigma}_{\text{MAP}}$ makes a uniformly random selection from among the set of labelings with greatest posterior probability. For fixed $\sigma$, let $Q(\sigma; G)$ denote its posterior probability (so that $Q(\sigma_0; G)$ is a random variable). In order to lower-bound the probability that $\map$ returns the incorrect labeling, we introduce the set $\Sigma = \{\sigma : Q(\sigma; G) = Q(\sigma_0; G)\}$. The above pairings certify that $|\Sigma| \geq \frac{L_n}{2} = \omega(1)$ with high probability. It follows that $\PR(\map \text{ fails} \mid G) \geq 1- \frac{2}{|\Sigma|}$. Taking expectations on both sides shows that $\map$ fails with high probability. Consequently, any other estimator also fails in exact recovery with high probability, completing the proof.
\end{proof}
\subsection{Statistical Achievability}
\begin{theorem}\label{theorem:achievability-general}
Let $G \sim \CSBM_n^k(\rho, P, t)$.
If $t > t_c$, then $$\lim_{n \to \infty} \mathbb{P}(\map \text{ achieves exact recovery}) = 1.$$
\end{theorem}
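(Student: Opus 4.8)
The plan is to show that the \textsc{MAP} estimator succeeds by comparing it to the genie-aided estimator and then invoking the geometric machinery already developed. First I would argue that it suffices to show the genie-aided estimator succeeds for $t > t_c$: the genie estimator $\hsig_{\mathrm{gen}}$ classifies each vertex $v$ via the MAP rule for the community of $v$ given $G$ and the true labels of all other vertices. A standard argument (as in \cite{Abbe2015, Abbe2020, Dhara2022a}) shows that if the genie estimator succeeds with probability $1-o(1)$ at every vertex simultaneously, then the true assignment $\sigma_0$ is, with high probability, the \emph{unique} maximizer of $\PR(\true = \sigma \mid G)$ up to relabeling — more precisely, $\sigma_0$ is a local maximum of the posterior under single-vertex flips, and one checks that no other assignment can have posterior probability at least as large, because any competing assignment differs from $\sigma_0$ on some vertex and the genie computation shows the posterior strictly decreases under such a flip. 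Hence $\map = \sigma_0$ with probability $1-o(1)$.

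The core of the argument is therefore analyzing the genie estimator, and here I would reduce to degree profiles. Conditioned on the labels of all vertices other than $v$, the posterior log-likelihood ratio of assigning $v$ to community $i$ versus community $j$ depends on the data only through the degree profile $d(v) = (d_{+r}, d_{-r})_{r\in[k]}$, and a direct computation (cf. the discussion preceding this section and \cite[Proposition~6.1]{Dhara2022a}) shows that the genie prefers community $i$ over $j$ precisely when
\begin{equation*}
\big(\log P_{ri}, \log(1-P_{ri})\big)_{r\in[k]} \cdot d(v) \;>\; \big(\log P_{rj}, \log(1-P_{rj})\big)_{r\in[k]} \cdot d(v) + O(1),
\end{equation*}
where the $O(1)$ term absorbs the prior terms $\log \rho_i - \log \rho_j$ and lower-order corrections from the Poisson approximation (Lemma~\ref{fact:stirling}). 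Note the difference of the two weight vectors above is exactly $w^\star$ from \eqref{defn:genie-vec}. So the genie correctly classifies every $v \in V_i$ as long as, for all $j \neq i$, the weighted degree profile of $v$ lies on the correct side of the relevant hyperplane, with a margin that dominates the $O(1)$ correction.

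This is exactly what Corollary~\ref{corollary:max} provides: since $t > t_c = \big(\min_{i\neq j}\Delta_+(\theta_i,\theta_j)\big)^{-1}$, we have $t > 1/\Delta_+(\theta_i,\theta_j)$ for every pair $i \neq j$, so with probability $1-o(1)$,
\begin{equation*}
\big(\log P_{ri}, \log(1-P_{ri})\big)_{r\in[k]}\cdot d(v) > \max_{j\ne i}\big(\log P_{rj}, \log(1-P_{rj})\big)_{r\in[k]}\cdot d(v) + \varepsilon\log n
\end{equation*}
for all $i$ and all $v\in V_i$, and the $\varepsilon \log n$ margin swamps the $O(1)$ term. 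Hence the genie estimator recovers every label with high probability, and by the first paragraph so does $\map$. The main obstacle I anticipate is the reduction in the first paragraph — carefully justifying that simultaneous success of the genie estimator forces $\map = \sigma_0$ — since one must rule out non-local competitors to the true assignment; but this is by now a fairly standard step, handled e.g. by a union bound over assignments differing from $\sigma_0$ on a given set of vertices, using that flipping any single vertex to a wrong community strictly decreases the log-posterior by at least $\varepsilon \log n$ on the good event, so any assignment differing from $\sigma_0$ on $m$ vertices has log-posterior smaller by roughly $m\varepsilon\log n$ (modulo the $O(m)$ interaction terms between the flipped vertices, which are lower order). The remaining steps — the Poisson/Stirling reduction and the hyperplane geometry — are already packaged in Lemmas~\ref{fact:stirling},~\ref{DRmeaning} and Corollary~\ref{corollary:max}.
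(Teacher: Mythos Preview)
Your genie-based argument is essentially the paper's low-discrepancy lemma (Lemma~\ref{lem:low-disc}): once you know that for every $v$ and every wrong label $j$ the log-likelihood drops by at least $\varepsilon\log n$, you can control competitors $\sigma$ with $\ham(\sigma,\true)=m$ by summing these per-vertex drops and correcting for the edges among the $m$ flipped vertices. But your handling of those interaction terms is where the gap is. The number of revealed edges among the $m$ flipped vertices is of order $m^2 \cdot \tfrac{t\log n}{n}$, not $O(m)$, and each such edge perturbs the log-posterior by $O(1)$. So the interaction correction is $O(m^2\log n/n)$, and for $m=\Theta(n)$ this is $\Theta(n\log n)$ --- the \emph{same} order as your main term $m\varepsilon\log n$. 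The genie margin therefore only controls competitors with $\ham(\sigma,\true)\le cn$ for some small $c$ depending on $\varepsilon$ and the model parameters; it says nothing about assignments that disagree with $\true$ on a constant fraction of vertices. Calling this step ``standard'' does not make it go away.

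The paper closes this gap with a genuinely different argument for the high-discrepancy regime (Lemma~\ref{lem:high-disc}). There one forgets the per-vertex genie margins entirely and instead computes $\E[z(G,\sigma)-z(G,\true)]$ directly as a sum of KL divergences weighted by the overlap counts $\nu_{ij}=|\{v:\true(v)=i,\sigma(v)=j\}|$, and shows via a case analysis that this expectation is at most $-C n\log n$ uniformly over all $\sigma$ with $\discrepancy(\sigma,\true)\ge cn$ (restricted to approximately balanced $\sigma$). One then needs a concentration bound strong enough to beat the union bound over $k^n$ assignments; the paper uses a moment inequality of Schudy--Sviridenko (Lemma~\ref{lemma:Schudy-Sviridenko}) to get failure probability $\exp(-\Omega(n\log n))$. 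Your proposal contains no analogue of either of these ingredients, and without them the reduction from genie success to MAP success is incomplete.
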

In order to prove Theorem \ref{theorem:achievability-general}, we require two concentration results. Given a graph $\mathcal{G} = (V, E)$ and $W \subseteq V$, let $e(W)$ be the number of edges with both endpoints in $W$. 
\begin{lemma}[{\cite[Corollary 2.3]{Krivelevich2006}}]\label{lemma:pseudorandom}
Let $0\leq p_n \leq 0.99$ and let $\mathcal{G}$ be a sample from an Erd\H{o}s-R\'enyi random graph on vertex set $[n]$ and with edge probability $p_n$. Then, with probability $1-o(1)$,
\[\left|e(W) -\binom{|W|}{2} p_n\right| \leq O(\sqrt{n p_n}) |W| \quad \text{for all } W \subseteq [n].\]
\end{lemma}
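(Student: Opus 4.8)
The plan is a union bound over all vertex subsets, organized by cardinality. Fix $W$ with $|W| = k$; then $e(W)$ is a sum of $\binom{k}{2}$ i.i.d.\ $\mathrm{Bernoulli}(p_n)$ variables, so $\E[e(W)] = \binom{k}{2}p_n =: \mu_k$, and it suffices to show that, with probability $1-o(1)$, \emph{every} $W$ satisfies $|e(W)-\mu_k| \le t_k := C_0\, k\sqrt{np_n}$ for a large enough absolute constant $C_0$ (which is the constant hidden in the $O(\cdot)$ of the statement). Since the bound fails when $np_n$ stays bounded --- for instance when $p_n$ is of order $n^{-2}$, a single present edge already violates it --- I work throughout in the regime $np_n \to \infty$, which covers every application of the lemma in this paper, where $p_n$ has order $\log n/n$.

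First I would dispose of the small sets deterministically: if $k \le 2C_0\sqrt{np_n}$ then $\mu_k \le \binom{k}{2} < k^2/2 \le C_0 k\sqrt{np_n} = t_k$, and since both $e(W)$ and $\mu_k$ lie in $[0,\binom{k}{2}]$ we get $|e(W)-\mu_k| \le \binom{k}{2} \le t_k$ with no randomness needed. Thus only sets of size $k > 2C_0\sqrt{np_n}$ require a probabilistic estimate.

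For these I would use two tail bounds for $\mathrm{Bin}\!\big(\binom{k}{2},p_n\big)$, according to whether $t_k$ is comparable to $\mu_k$ or much larger. If $t_k = O(\mu_k)$, Bernstein's inequality gives $\PR(|e(W)-\mu_k|>t_k) \le 2\exp(-t_k^2/(2\mu_k + 2t_k/3))$, and since $t_k^2/\mu_k = \Theta(C_0^2 n)$ this is $\exp(-\Omega(n))$, which swamps the $\binom{n}{k} \le 2^n$ sets of this size. If instead $t_k \gg \mu_k$ the lower tail vanishes, and the sharp binomial upper-tail bound $\PR(\mathrm{Bin}(N,p) \ge a) \le (eNp/a)^a$ with $a = \mu_k + t_k$ gives $\PR(e(W) \ge \mu_k + t_k) \le (e\mu_k/t_k)^{t_k}$. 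Using $\binom{n}{k} \le (en/k)^k = e^{k\ln(en/k)}$, one checks that for $C_0$ large enough $\binom{n}{k}\,\PR(|e(W)-\mu_k|>t_k) \le n^{-3}$ for every $k$ in this range; a union bound over $k \in \{2,\dots,n\}$ then completes the argument.

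The step I expect to be the main obstacle is the intermediate band of cardinalities --- roughly $k$ between $\sqrt{np_n}$ and $\sqrt{n/p_n}$ --- in which $\mu_k$ is far smaller than the target deviation $t_k$: there the crude exponential tail $\exp(-\Omega(t_k))$ is \emph{not} strong enough to beat the entropy factor $e^{k\ln(en/k)}$ (this is visible already at $k$ of order $\log n$ with $p_n$ of order $\log n/n$, where $k\ln(en/k)$ has order $(\log n)^2$ but $t_k$ only order $(\log n)^{3/2}$), so one must retain the logarithmic factor $\ln(t_k/\mu_k)$ in the exponent via the sharp estimate $(e\mu_k/t_k)^{t_k}$ --- equivalently, one must use the combinatorial fact that a globally sparse graph contains no small, moderately dense induced subgraphs. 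A cleaner alternative, and the route of the cited reference, is spectral: writing $A$ for the adjacency matrix of $\mathcal{G}$, one has $2e(W) = \mathbf{1}_W^{\top} A\,\mathbf{1}_W$ and $2\mu_k = \mathbf{1}_W^{\top} \E[A]\,\mathbf{1}_W$ with $\E[A] = p_n(J-I)$ ($J$ the all-ones matrix), so $|e(W)-\mu_k| \le \tfrac{1}{2}\|A-\E[A]\|_2\,|W|$ for all $W$ at once, and the lemma reduces to the concentration $\|A-\E[A]\|_2 = O(\sqrt{np_n})$ with probability $1-o(1)$, which is standard for $np_n = \Omega(\log n)$; that operator-norm bound is itself proved by discretizing the quadratic form over an $\varepsilon$-net and running the same Bernstein / sharp-tail dichotomy, so the combinatorial difficulty is relocated rather than removed.
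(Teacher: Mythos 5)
This lemma is not proved in the paper at all: it is imported verbatim from Krivelevich--Sudakov, so there is no internal argument to compare against, and your task was effectively to supply a proof from scratch. What you propose is correct. The union-bound route is sound: the deterministic disposal of sets with $|W|\le 2C_0\sqrt{np_n}$ is a clean way to kill the smallest cardinalities, the Bernstein regime gives a uniform $\exp(-\Omega(C_0^2 n))$ because $t_k^2/\mu_k = 2C_0^2 n$ is independent of $k$, and your diagnosis of the intermediate band is exactly right --- the crude $\exp(-\Omega(t_k))$ tail loses to the entropy there, and one must keep the $\ln(t_k/\mu_k)$ factor from the bound $(e\mu_k/t_k)^{t_k}$; I checked that with that factor retained the inequality $k\ln(en/k)+3\ln n \le t_k\ln\bigl(t_k/(e\mu_k)\bigr)$ does close over the whole remaining range once $np_n\to\infty$ and $C_0$ is large. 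Your preliminary caveat is also a genuine observation: as literally stated (for all $0\le p_n\le 0.99$) the lemma is false, e.g.\ for $p_n=n^{-2}$ a single present edge, which occurs with probability bounded away from $0$, already violates the bound for the two-element set $W$ consisting of its endpoints; some lower bound on $np_n$ is needed, and the paper only ever invokes the lemma with $p_n=\Theta(\log n/n)$, where both of your routes apply. Regarding the spectral alternative: since $2\bigl(e(W)-\binom{|W|}{2}p_n\bigr)=\mathbf{1}_W^{\top}(A-\E[A])\mathbf{1}_W$, the statement follows from $\Vert A-\E[A]\Vert_2=O(\sqrt{np_n})$, and the paper already proves exactly this concentration in Lemma~\ref{lemma:entrywise-3} (via \cite{Hajek2016}), so for the regime $p_n\ge c_0\log n/n$ in which the lemma is actually used, the shortest self-contained proof is two lines quoting the paper's own spectral norm bound; the combinatorial argument you give is what one needs if one insists on the weaker hypothesis $np_n\to\infty$.
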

\begin{lemma}\label{lemma:Schudy-Sviridenko}
Let $X_1, X_2, \dots, X_n$ be a sequence of independent discrete random variables, whose support is a finite set $S$. Let $X = \sum_{i=1}^n X_i$ and $Y = \sum_{i=1}^n |X_i|$. Let $L = \max\{|x| : x \in S\}$. Then for any $\delta \in (0,1)$,
\[\mathbb{P}\left(\left|X - \mathbb{E}\left[X \right] \right| \geq \delta |\mathbb{E}\left[X \right]| \right) \leq \exp \bigg(2-C\delta^2 \frac{\left(\mathbb{E}[X]\right)^2}{L \mathbb{E}[Y]} \bigg),\]
where $C>0$ is a universal constant.
\end{lemma}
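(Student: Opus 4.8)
The plan is to recognize Lemma~\ref{lemma:Schudy-Sviridenko} as a Bernstein-type concentration bound and deduce it from the classical Bernstein inequality applied to the centered summands $Z_i:=X_i-\mathbb{E}[X_i]$. First I would dispense with trivial cases and normalize signs: if $\mathbb{E}[Y]=0$ then $X\equiv 0$ and there is nothing to prove, and replacing each $X_i$ by $-X_i$ leaves both sides of the inequality unchanged (the event features $|\mathbb{E}[X]|$, the exponent features $(\mathbb{E}[X])^2$), so I may assume $\mathbb{E}[X]\ge 0$; I may also assume $\mathbb{E}[X]>0$ since otherwise the right-hand side is already at least $e^{2}>1$. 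Next I would assemble the two quantities Bernstein requires. Since every $x\in S$ has $|x|\le L$, each centered variable satisfies $|Z_i|\le 2L$ almost surely. For the variance, the key observation is the pointwise bound $X_i^2=|X_i|^2\le L|X_i|$, so $\mathrm{Var}(X_i)\le \mathbb{E}[X_i^2]\le L\,\mathbb{E}[|X_i|]$, and summing over $i$ gives $\sum_i\mathrm{Var}(X_i)\le L\,\mathbb{E}[Y]$.

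Now I would apply Bernstein's inequality with almost-sure bound $M=2L$, variance proxy $V=L\,\mathbb{E}[Y]$, and deviation level $u=\delta\,\mathbb{E}[X]$:
\[
\mathbb{P}\big(|X-\mathbb{E}[X]|\ge \delta\,\mathbb{E}[X]\big)\ \le\ 2\exp\!\left(-\frac{u^2/2}{V+Mu/3}\right)\ =\ 2\exp\!\left(-\frac{\tfrac12\delta^2(\mathbb{E}[X])^2}{L\,\mathbb{E}[Y]+\tfrac23 L\delta\,\mathbb{E}[X]}\right).
\]
To reach the advertised form I would control the denominator with the crude estimate $\mathbb{E}[X]=\sum_i\mathbb{E}[X_i]\le\sum_i\mathbb{E}[|X_i|]=\mathbb{E}[Y]$; combined with $\delta<1$ this gives $\tfrac23 L\delta\,\mathbb{E}[X]\le L\,\mathbb{E}[Y]$, so the denominator is at most $2L\,\mathbb{E}[Y]$. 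Then the elementary inequality $2\exp(-a)\le e^{2-a}$ absorbs the prefactor, yielding the claimed bound with, say, $C=\tfrac14$.

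If a self-contained argument is preferred over citing Bernstein, the same two inputs feed directly into a Chernoff bound: one bounds the single-variable exponential moment $\mathbb{E}[e^{\lambda Z_i}]\le \exp\!\big(\tfrac{\lambda^2\mathbb{E}[Z_i^2]}{2(1-2|\lambda|L/3)}\big)$ for $|\lambda|<3/(2L)$ (the standard sub-exponential lemma, obtained by comparing $e^{\lambda z}$ with its second-order Taylor remainder and using $|Z_i|\le 2L$ together with the variance bound above), multiplies over the independent coordinates, applies Markov's inequality to $e^{\lambda(X-\mathbb{E}[X])}$, and optimizes over $\lambda$. I do not expect a genuine obstacle here: this is essentially a textbook computation, and the only points needing care are the sign and degenerate-case normalization, the pointwise inequality $X_i^2\le L|X_i|$ that converts the second-moment sum into $L\,\mathbb{E}[Y]$, and the exploitation of $\delta<1$ to keep the Bernstein denominator of order $L\,\mathbb{E}[Y]$ so that the final exponent has exactly the stated shape.
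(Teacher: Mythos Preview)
Your argument is correct. The approach, however, differs from the paper's. The paper invokes a special case of the Schudy--Sviridenko moment-bounded concentration inequality \cite[Theorem~1.3]{Schudy2012}: it verifies the moment condition $\mathbb{E}[|X_i|^k]\le kL\,\mathbb{E}[|X_i|^{k-1}]$ (which follows from $|X_i|\le L$, exactly as your $k=2$ case), applies the black-box bound $e^2\max\{\exp(-\lambda^2/(\mu_0 LR)),\exp(-\lambda/(LR))\}$ with $\mu_0=\mathbb{E}[Y]$ and $\lambda=\delta|\mathbb{E}[X]|$, and observes that $\lambda<\mu_0$ forces the Gaussian-tail term to dominate. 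You instead go through the classical Bernstein inequality directly, using $|Z_i|\le 2L$ and the same second-moment control $\mathrm{Var}(X_i)\le L\,\mathbb{E}[|X_i|]$, then absorb the linear Bernstein term via $\delta\,\mathbb{E}[X]\le\mathbb{E}[Y]$. Your route is more elementary and fully self-contained (no external theorem beyond Bernstein), at the cost of a slightly worse explicit constant; the paper's route is shorter once the cited result is accepted. Both hinge on the identical observation that boundedness by $L$ converts higher moments into $L$ times lower moments, and both exploit $\delta<1$ together with $|\mathbb{E}[X]|\le\mathbb{E}[Y]$ to land in the sub-Gaussian regime.
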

The proof of Lemma~\ref{lemma:Schudy-Sviridenko} follows directly from \cite[Theorem 1.3]{Schudy2012}. See Appendix \ref{sec:appendix-sum-concentration} for details.
We will also need the following definitions in the proof of Theorem~\ref{theorem:achievability-general}. 

\begin{definition}[Permissible relabeling]
A permutation $\pi : [k] \to [k]$ is called a \emph{permissible relabeling} if $\rho(i) = \rho(\pi(i))$ for all $i \in [k]$ and $P_{ij} = P_{\pi(i), \pi(j)}$ for all $i, j \in [k]$. Let $\mathcal{P}(\rho, P)$ denote the set of permissible relabelings.
\end{definition}
\begin{definition}[Discrepancy] \label{def:disc}
Given two assignments $\sigma, \sigma' : [n] \to [k]$, their \emph{discrepancy} $\discrepancy(\sigma, \sigma')$ is defined as 
\[\min_{\pi \in \mathcal{P}(\rho, P)}\left\{\ham((\pi \circ \sigma), \sigma') \right\},\]
where $\ham(\cdot, \cdot)$ denotes the Hamming distance.
\end{definition}
Note that, if an estimator $\hsig$ satisfies $\discrepancy(\hsig,\true) = 0$ with high probability, then $\hsig$ achieves exact recovery. Next, let $E_{+}$ and $E_-$ respectively denote the sets of present and absent edges of~$G$. 
For a community assignment $\sigma$, communities $i,j \in [k]$ 
and $\Box\in \{+,-\}$, define
\begin{align*}
&S^{ij}_{\sss \Box}(G,\sigma) = \{e = \{u,v\}\in E_{\sss \Box}: \{\sigma(u), \sigma(v)\}  = \{i,j\}\}\\\
&\text{and}\quad  s^{ij}_{\sss \Box}(G,\sigma) = |S^{ij}_{\sss \Box}(G,\sigma)|.
\end{align*}
For example, $s_-^{11}(G, \sigma)$ is the number of absent edges with both endpoints in community 1 according to $\sigma$. 
Define 
\begin{align} 
    &z(G,\sigma) = 2\sum_{i,j \in [k]:j \geq i}  \Big[s^{ij}_+(G,\sigma) \log P_{ij}+  s^{ij}_-(G,\sigma)\log (1-P_{ij})\Big].\label{defn:z}
\end{align}
{Note that $z(G,\sigma)$ is twice the log-likelihood of $G$ under $\sigma$.}
The idea is to show that the maximizer of $z(G,\sigma)$ yields a  configuration~$\sigma$ with zero discrepancy. 
We state this in the following two lemmas which deal with low and high values of discrepancies separately.
 
\begin{lemma}\label{lem:low-disc}
    There exists $c\in (0,1)$ such that with high probability
    \begin{eq} \label{eq:z-bound}
    &z(G, \sigma) < z(G, \true) \text{ for all } \sigma \text{ such that } 0<\discrepancy(\sigma, \sigma_0) \leq cn.
    \end{eq}
\end{lemma}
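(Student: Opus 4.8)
The plan is to show that maximizing $z(G,\sigma)$ over configurations $\sigma$ with small but positive discrepancy forces $\sigma = \true$ up to permissible relabeling, by a vertex-by-vertex switching argument. First I would fix $\sigma$ with $0 < \discrepancy(\sigma,\sigma_0) \le cn$, and choose the permissible relabeling $\pi$ achieving the discrepancy, so that after relabeling, $\sigma$ and $\true$ differ on a set $W$ of at most $cn$ vertices. The key observation is the telescoping identity: moving from $\true$ to $\sigma$ can be effected by re-assigning the vertices in $W$ one at a time, and $z(G,\sigma) - z(G,\true)$ decomposes (up to lower-order cross-terms among $W$) as a sum over $v \in W$ of the change in the ``log-likelihood score'' of $v$. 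Concretely, the marginal contribution of reassigning a single vertex $v$ from its true community $i$ to a wrong community $i'$ is governed by
\[
\big(\log P_{ri'}, \log(1-P_{ri'})\big)_{r\in[k]} \cdot d(v) - \big(\log P_{ri}, \log(1-P_{ri})\big)_{r\in[k]} \cdot d(v),
\]
and by Corollary~\ref{corollary:max}, this quantity is $\le -\varepsilon \log n$ with high probability, uniformly over all $v$ and all $i' \ne i$. So each single-vertex switch strictly decreases the score by at least $\varepsilon\log n$.

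The second step is to control the cross-terms: when we reassign several vertices of $W$, edges with both endpoints in $W$ get counted with a ``wrong'' coefficient relative to the clean single-switch accounting, so $z(G,\sigma) - z(G,\true)$ is not exactly $\sum_{v\in W}(\text{single-switch change})$ but differs by a term controlled by $e(W)$, the number of present/absent edges inside $W$. This is where Lemma~\ref{lemma:pseudorandom} enters: since $|W| \le cn$ and the edge probabilities are $\Theta(\log n / n)$ (after censoring), $e(W) = O(\sqrt{np_n})|W| = O(\sqrt{\log n}\,|W|)$ with high probability, so the total cross-term error is $O(\sqrt{\log n}\,|W|\cdot \log(\tfrac{1}{P}))= o(|W|\log n)$, which is dominated by the $\varepsilon|W|\log n$ gain from the single-switch terms. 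Combining, $z(G,\sigma) - z(G,\true) \le -\varepsilon|W|\log n + o(|W|\log n) < 0$ since $W \ne \varnothing$, which is \eqref{eq:z-bound}. A small subtlety is that we also need $P\cdot d(v)$-type scores for $v$ with respect to an \emph{arbitrary} target community, which Corollary~\ref{corollary:max} supplies since it bounds $(\log P_{ri},\log(1-P_{ri}))\cdot d(v)$ against the max over all $j\ne i$.

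I expect the main obstacle to be making the telescoping/cross-term bookkeeping fully rigorous: one must carefully choose the order of single-vertex switches and track exactly how each present or absent edge inside $W$ (and between $W$ and $W^c$) is accounted, to confirm that the discrepancy between $z(G,\sigma)-z(G,\true)$ and the naive sum of single-switch scores is bounded purely in terms of $e(W)$ times a constant depending on $\max_{ij}|\log P_{ij}|, |\log(1-P_{ij})|$. The rest --- the per-vertex $\varepsilon\log n$ gap from Corollary~\ref{corollary:max} and the $e(W)$ estimate from Lemma~\ref{lemma:pseudorandom} --- is then a routine comparison of orders of magnitude. One also has to handle the finitely many permissible relabelings $\pi$ by a union bound, which costs nothing.
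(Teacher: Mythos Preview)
Your overall strategy is essentially the paper's: express $z(G,\sigma)-z(G,\true)$ as a sum over misclassified vertices of single-switch scores (each bounded by $-\varepsilon\log n$ via the degree-profile separation result), plus a cross-term controlled by the number of revealed edges inside the misclassified set $W$, bounded using Lemma~\ref{lemma:pseudorandom}. That is exactly the paper's approach.

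However, there is a genuine gap in your application of Lemma~\ref{lemma:pseudorandom}. That lemma gives
\[
e(W)\le \binom{|W|}{2}p_n + O(\sqrt{np_n})\,|W|,
\]
and you have dropped the first term, claiming $e(W)=O(\sqrt{\log n}\,|W|)$. For $|W|=cn$ and $p_n=\Theta(\log n/n)$, the quadratic term is $\Theta(c^2 n\log n)=\Theta(c|W|\log n)$, which is the \emph{same order} as the main gain $\varepsilon|W|\log n$, not $o(|W|\log n)$. So the cross-term is not automatically lower order; it is $O(c\,|W|\log n)+O(\sqrt{\log n}\,|W|)$ (times constants depending on $P$), and the proof only closes because the lemma allows you to choose $c$ small enough, specifically $c$ of order $\varepsilon/(t\cdot\max_{ij}|\log P_{ij}|)$ or so, to make the quadratic contribution less than, say, $\tfrac12\varepsilon|W|\log n$. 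The paper handles exactly this: it carries the term $\frac{8k^3L\tau t\log n}{n}\ham(\sigma,\true)^2$ and then takes $c=\varepsilon/(2k^3L\tau t)$. Your write-up needs to retain the quadratic term and make the dependence of $c$ on $\varepsilon$ explicit; otherwise the argument as stated is incorrect at the upper end of the range $|W|\sim cn$.
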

For the high discrepancy case, we need to restrict the range of $\sigma$. To that end, for any $\eta>0$, define
\begin{align}
    &\Sigma_0(\eta) := \Big\{\sigma: [n] \mapsto [k] : |\{v : \sigma(v) = i\}|   \in ((\rho_i-\eta) n, (\rho_i+\eta) n),\ \forall i\in [k] \Big\}.\label{defn-sigma-range}
\end{align}

\begin{lemma}\label{lem:high-disc}
  Fix any $c\in (0,1]$. There exists an $\eta>0$ such that with high probability
    \begin{align} 
    &z(G, \sigma) < z(G, \true) \text{ for all } \sigma\in \Sigma_0(\eta) \text{ such that } \discrepancy(\sigma, \sigma_0) \geq cn.\label{eq:z-bound-2}
    \end{align}
\end{lemma}

\begin{proof}[Proof of Theorem \ref{theorem:achievability-general}]
Fix $c$ such that both the conclusions of Lemmas~\ref{lem:low-disc}~and~\ref{lem:high-disc} hold. Let $\eta$ be picked according to Lemma~\ref{lem:high-disc}. 
Rather than analyzing the MAP estimator, we will analyze the estimator 
\[\overline{\sigma} = \argmax_{\sigma \in \Sigma_0(\eta)}\{z(G, \sigma)\}.\]
Lemmas~\ref{lem:low-disc}~and~\ref{lem:high-disc} yield  $\discrepancy(\overline{\sigma},\true) = 0$, and therefore $\overline{\sigma}$ succeeds in exact recovery, with high probability. Since the MAP estimator is optimal, this also implies that the MAP estimator succeeds in exact recovery with high probability. 
\end{proof}

\begin{proof}[Proof of Lemma~\ref{lem:low-disc}]
Let $\discrepancy(\sigma,\true) = \delta n$ for some $\delta>0$ (to be chosen later). Let $\pi\in \mathcal{P}(\rho, P)$ be such that $\ham(\sigma\circ \pi, \true) = \delta n$. 
However, since $z(G,\sigma) = z(G,\sigma\circ \pi)$ for any $\pi \in \mathcal{P}(\rho, P)$, we can without loss of generality assume that $\ham(\sigma,\true) = \delta n$.
Let us fix $\Box \in \{+,-\}$.
To prove \eqref{eq:z-bound}, we start by analyzing $s_{\sss \Box}^{rj}(G,\sigma) -s_{\sss \Box}^{rj}(G,\true)$ with $r,j\in [k]$. 
Fix $r\neq j$. We decompose 
\begin{eq} \label{decomp-difference}
    s_{\sss \Box}^{rj}(G,\sigma) -s_{\sss \Box}^{rj}(G,\true) &= \sum_{\{u,v\}\in E_{\sss \Box}} \1_{\{\{\sigma(u), \sigma(v)\} = \{r,j\}, \{\true(u),\true(v)\} =  \{r,i\}, i\neq j\}} \\
    &+ \sum_{\{u,v\}\in E_{\sss \Box}} \1_{\{\{\sigma(u), \sigma(v)\} = \{r,j\}, \{\true(u), \true(v)\} = \{i,j\}, i\neq r\}} \\
    &+ \sum_{\{u,v\}\in E_{\sss \Box}} \1_{\{\{\sigma(u), \sigma(v)\} = \{r,j\}, \{\sigma(u), \sigma(v)\}\cap\{\true(u), \true(v)\} = \varnothing\}} \\
    &-  \sum_{\{u,v\}\in E_{\sss \Box}} \1_{\{\{\sigma(u), \sigma(v)\} = \{r,i\},i \neq j, \{\true(u), \true(v)\} = \{r,j\}\}}  \\ 
    &- \sum_{\{u,v\}\in E_{\sss \Box}} \1_{\{\{\sigma(u), \sigma(v)\} = \{i,j\}, i\neq r, \{\true(u), \true(v)\} = \{r,j\}\}} \\
    &- \sum_{\{u,v\}\in E_{\sss \Box}} \1_{\{\{\true(u), \true(v)\} = \{r,j\}, \{\sigma(u), \sigma(v)\}\cap\{\true(u), \true(v)\} = \varnothing\}} .
\end{eq}
To analyze~\eqref{decomp-difference}, denote the six terms above by (I), (II), \dots, (VI) respectively.

Let $H_{\sss \Box} (\sigma)$ be the graph on $\{v:\sigma(v)\neq\true(v)\}$ where $\{u,v\}$ is an edge of $H_{\sss \Box} (\sigma)$ if and only if $\{u,v\} \in E_{\sss \Box}$. Let $e(H_{\sss \Box} (\sigma))$ denote the number of edges in $H_{\sss \Box} (\sigma)$. We will show that
\begin{eq} \label{equality-1-dif}
    \bigg|\text{(I)} - \sum_{i\in [k] \setminus \{j\}} \sum_{v:\sigma(v) = j, \true(v) = i} d_{{\sss \Box}r}(v)\bigg| \leq 3k\cdot e(H_{\sss \Box} (\sigma)).
\end{eq}
To compute (I), fix $i,r,j$, $r\neq j$ $i\neq j$, and consider two cases: \\ 

\noindent\textbf{Case I: $i,j,r$ are distinct.} Denote this contribution as (Ia). There are two subcases. Suppose that the $r$-labeled vertex under $\sigma,\true$ is the same vertex. 
Think of $u$ being such that $\sigma(u) = \true(u) = r$. 
The number of such edges is $\sum_{v:\sigma(v) = j, \true(v) = i} d_{{\sss \Box}r}(v) - \text{Err}_{\sss \text{(I)}}$, where $\text{Err}_{\sss \text{(I)}}$ is the number of $\{u,v\}\in E_{\sss \Box}$ such that $\sigma(u) \neq r,\true(u) = r, \sigma(v) = j, \true(v) = i$. To see this, note that the summation $\sum_{v : \sigma(v) = j, \true(v) = i} d_{{\sss \Box}r}(v)$ counts all edges (present or absent depending on $\Box = +$ or $\Box = -$) from $\{v : \sigma(v) = j, \true(v) = i\}$ to $\{u:\true(u) = r\}$. However, this causes an over-counting because these edges may be incident to $u$'s with $\sigma(u) \neq r$, resulting in the substraction of $\text{Err}_{\sss \text{(I)}}$. Note that $\text{Err}_{\sss \text{(I)}}$ is at most $e(H_{\sss \Box} (\sigma))$. Next, consider the second subcase, where the $r$-labeled vertex under $\sigma,\true$ are different. Since $i,j,r$ are distinct, such edges will have both endpoints in $\{v:\sigma(v)\neq\true(v)\}$. Therefore, 
\begin{eq}\label{distinct-contributions}
    \bigg|\text{(Ia)} - \sum_{v:\sigma(v) = j, \true(v) = i} d_{{\sss \Box}r}(v) \bigg| \leq 2 e(H_{\sss \Box} (\sigma)).
\end{eq}
\noindent\textbf{Case II: $i = r$.} Denote this contribution as (Ib). Since $r\neq j$, we only need to consider the case where one of the endpoints is labeled $r$ by both $\sigma,\true$. An argument identical to the first part of Case I shows 
\begin{eq}\label{identical-contributions}
    \bigg|\text{(Ib)} - \sum_{v:\sigma(v) = j, \true(v) = i} d_{{\sss \Box}r}(v) \bigg| \leq e(H_{\sss \Box} (\sigma)).
\end{eq}
Combining \eqref{distinct-contributions} and \eqref{identical-contributions}, \eqref{equality-1-dif} follows immediately. Bounds similar to \eqref{equality-1-dif} also hold for Terms (II), (IV), and (V). Term (III) is easily bounded by $e(H_{\sss \Box} (\sigma))$. Finally, we simply drop Term (VI) for upper bounding \eqref{decomp-difference}.

For $r = j$, we get a similar decomposition as~\eqref{decomp-difference}, except that the second and fifth terms would be omitted. For each of the terms, we can also prove \eqref{equality-1-dif}. In particular, 
\begin{align*}
\left|\text{(I)} -  \sum_{i\in [k] \setminus \{j\}} \sum_{v:\sigma(v) = j, \true(v) = i} d_{{\sss \Box}r}(v) \right| &\leq (k-1) \cdot e(H_{\sss \Box} (\sigma)) \leq 3k\cdot e(H_{\sss \Box} (\sigma)).
\end{align*}

Next, we need to bound $e(H_{\sss \Box} (\sigma))$. Note that the number of vertices in $H_{\sss \Box} (\sigma)$ is $\ham(\sigma,\sigma_0)$, where $\ham(\cdot, \cdot)$ denotes the Hamming distance.
Letting $\tau = \max_{a,b\in [k]} \max \{P_{ab}, 1-P_{ab}\}$, we see that there is a coupling such that, with probability 1, $H_{\sss \Box}(\sigma)$ is a subgraph of an Erd\H{o}s-R\'enyi random graph on vertex set $[n]$ and edge probability $\frac{\tau t \log n}{n}$. 
Applying Lemma \ref{lemma:pseudorandom}, we obtain that with probability $1-o(1)$
\begin{eq}\label{bound-Err-I}
    &e(H_{\sss \Box}(\sigma) )\leq \frac{\tau t \log n}{2n} \ham(\sigma, \true)^2 + O(\sqrt{\log n}) \ham(\sigma, \true) \text{ for all }\sigma \in [k]^n.
\end{eq}
Combining \eqref{equality-1-dif} and \eqref{bound-Err-I}, we get an estimate for (I) in \eqref{decomp-difference}. 
Similar estimates for (II), (IV), (V) can be deduced using an identical argument. The term (III) can be directly bounded by $e(H_{\sss\Box} (\sigma))$ as well and (VI) can be dropped. 
Therefore, \eqref{decomp-difference} implies that with probability $1-o(1)$
\begin{eq} \label{diff-srj}
    s_{\sss\Box}^{rj}(G,\sigma) -s_{\sss\Box}^{rj}(G,\true) &\leq \sum_{i:i\neq j} \sum_{v:\sigma(v) = j, \true(v) = i} d_{{\sss\Box} r}(v) \\
    &\quad + \sum_{i:i\neq r} \sum_{u:\sigma(u) = r, \true(u) = i} d_{{\sss\Box} j}(u)\\
    &\quad - \sum_{i:i\neq j} \sum_{v: \sigma(v) = i, \true(v) = j} d_{{\sss\Box} r}(v)  \\
    &\quad- \sum_{i:i\neq r} \sum_{u: \sigma(u) = i, \true(u) = r} d_{{\sss\Box} j}(u) \\
    &\quad + \frac{8k\tau t \log n}{n} \ham(\sigma, \true)^2 + O(\sqrt{\log n} ) \ham(\sigma, \true).
\end{eq}
For $r = j$, a bound identical to \eqref{diff-srj} holds after omitting the second and the fourth terms.
Next, by Proposition \ref{prop:seperation-verification} \ref{prop:seperation-verification-1}, there exists $\varepsilon > 0$ such that for all $i,j\in [k]$ and $i>j$, with high probability, 
\begin{eq} \label{inner-prod-ij-sign}
\langle w^\star_{ij}, d(v) \rangle &= \sum_{r\in [k]} d_{+r}(v)\log \frac{P_{ri}}{P_{rj}} + d_{-r}(v)\log \frac{1-P_{ri}}{1-P_{rj}}\\ 
&\quad \begin{cases}
\geq \varepsilon \log n, \quad &\forall v : \true(v) = j\\
\leq - \varepsilon \log n, \quad &\forall v : \true(v) = i
\end{cases}
\end{eq}
Let $L = \sum_{i,j\in [k]} \sum_{r\in [k]}\big|\log \frac{P_{ri}}{P_{rj}}\big| + \big|\log \frac{1-P_{ri}}{1-P_{rj}}\big| $. Thus, \eqref{diff-srj} yields, with high probability, 
\begin{align*}
    z(G,\sigma) - z(G,\true) &\leq 2\sum_{\substack{i,j\in [k]\\i> j}} \sum_{r\in [k]} \bigg[\bigg(\sum_{\substack{v:\sigma(v) = j,\\ \true(v) = i}} d_{+r}(v) - \sum_{\substack{v: \sigma(v) = i,\\ \true(v) = j}} d_{+r}(v) \bigg)\log \frac{P_{ri}}{P_{rj}}\\
    &\qquad + \bigg(\sum_{\substack{v:\sigma(v) = j,\\ \true(v) = i}} d_{-r}(v) - \sum_{\substack{v: \sigma(v) = i,\\ \true(v) = j}} d_{-r}(v) \bigg)\log \frac{1-P_{ri}}{1-P_{rj}}\bigg] \\
    & \qquad +  \frac{8k^3L\tau t \log n}{n} \ham(\sigma, \true)^2 + O(\sqrt{\log n})  \ham(\sigma, \true) \\
    & = 2\sum_{\substack{i,j\in [k]:\\i> j}} \bigg[\sum_{\substack{v:\sigma(v) = j,\\ \true(v) = i}} \langle w^\star_{ij}, d(v) \rangle - \sum_{\substack{v: \sigma(v) = i,\\ \true(v) = j}} \langle w^\star_{ij}, d(v) \rangle \bigg]\\
    & \qquad +  \frac{8k^3L\tau t \log n}{n} \ham(\sigma, \true)^2 + O(\sqrt{\log n} ) \ham(\sigma, \true)\\
    & \leq -2 \ham(\sigma, \true) \varepsilon \log n+ \frac{8k^3L\tau t \log n}{n} \ham(\sigma, \true)^2 \\
    & \qquad+ O(\sqrt{\log n}) \ham(\sigma, \true). 
\end{align*}
Thus, for any 
$\delta \leq \frac{\varepsilon}{3k^3L\tau t}$, we can ensure that $z(G,\sigma) - z(G,\true)<0$ for all $\sigma$ with $\ham(\sigma, \true) = \delta n$  with high probability. Thus the proof follows by taking $c = \frac{\varepsilon}{2k^3 L\tau t}$.
\end{proof}

\begin{proof}[Proof of Lemma~\ref{lem:high-disc}]\footnote{Thank you to Charlie Guan for noting several errors present in an earlier version of this proof.} Fix $c\in (0,1]$. Define 
\begin{eq}\label{defn:eta}
    \eta &= \frac{1}{3} \left(\min\{|\rho_i - \rho_j| : \rho_i \neq \rho_j, i, j \in [k]\} \land \min\{\rho_i : i \in [k]\}\right)\wedge \frac{c}{k}.
\end{eq}
Throughout, we condition on the event that $\true\in \Sigma_0(\eta)$, where $\Sigma_0(\eta)$ is defined in~\eqref{defn-sigma-range}. Due to~\eqref{eq:concentration}, this conditioning event holds with high probability. 
Fix an assignment $\sigma \in \Sigma_0(\eta)$ satisfying $\discrepancy(\sigma, \sigma_0) \geq cn$. The idea is to show that $\mathbb{E}\left[z(G, \sigma) - z(G, \sigma_0)\right] \leq -C n \log n$, and use the concentration bound in Lemma~\ref{lemma:Schudy-Sviridenko} to conclude that \eqref{eq:z-bound} holds.

We first compute the expected difference $\E[z(G, \sigma) - z(G, \sigma_0)]$. 
Let $V_{ij}:= \{v:\true(v) = i, \sigma(v) = j\}$ and $\nu_{ij} = |V_{ij}|$. 
Now, fix $i,j,a,b$ with $i\ne a$ or $j\ne b$. The expected number of present edges between $V_{ij}$ and $V_{ab}$ is $\nu_{ij}  \nu_{ab} \times\alpha P_{ia}$, where $\alpha = \frac{t\log n}{n}$. The contribution of these edges to $\E[z(G, \sigma) - z(G, \sigma_0)]$ is 
\begin{align*}
&2\nu_{ij} \nu_{ab} \times \alpha P_{ia} \times \left(\log (P_{jb}) - \log(P_{ia})\right) =  2\nu_{ij}  \nu_{ab} \times \alpha \times P_{ia} \log \frac{P_{jb}}{P_{ia}} .    
\end{align*}
Similarly, the contribution from absent edges is
\[2\nu_{ij}  \nu_{ab} \times \alpha \times (1-P_{ia}) \log \frac{1-P_{jb}}{1-P_{ia}} .\]
Meanwhile, the contribution of present or absent edges from $V_{ab}$ to itself is
\[\nu_{ab}(\nu_{ab}-1) \times \alpha \times P_{aa} \log \frac{P_{bb}}{P_{aa}}+\nu_{ab}(\nu_{ab}-1)\times \alpha \times (1-P_{aa}) \log \frac{1-P_{bb}}{1-P_{aa}}.\]

Summing over all contributions, 
and noting that the contribution for the terms with $i = j$  and $a = b$ is zero due to the presence of $\log$ terms, we obtain 
    \begin{align*}
    \mathbb{E}\left[z(G, \sigma) - z(G, \sigma_0)\right] &= \alpha \sum_{i,j,a,b\in [k]} (\nu_{ij}  \nu_{ab}-\1_{a=i,b=j}\nu_{ij}) \bigg(P_{ia} \log \frac{P_{jb}}{P_{ia}} + (1-P_{ia}) \log \frac{1-P_{jb}}{1-P_{ia}} \bigg)\\
    &= -\alpha \sum_{i,j,a,b\in [k]} (\nu_{ij}  \nu_{ab}-\1_{a=i,b=j}\nu_{ij}) \DKL\left(P_{ia}, P_{jb} \right)\\
    &\le -\alpha \sum_{i,j,a,b\in [k]} \nu_{ij}  (\nu_{ab}-1) \DKL\left(P_{ia}, P_{jb} \right),
    \end{align*}
where $\DKL (\cdot,\cdot)$ denotes the Kullback--Leibler divergence. 
Our goal is to upper-bound the expectation. 
Note that all terms are nonpositive, so it suffices to bound a subset of the terms.
We treat two disjoint cases separately.\\

\noindent \textbf{Case 1:} \textit{For all $i$, there is at most one $j \in [k]$ such that $\nu_{ij} \geq \frac{\eta n}{k}$.} Fixing $i$, the pigeonhole principle then implies that there is exactly one such $j$. But since $\sum_{l} \nu_{il} \geq (\rho_i - \eta) n$, we know that 
\[\nu_{ij} \geq \rho_i n - (k-1)\frac{\eta n}{k} - \eta n > \left( \rho_i -2 \eta \right)n.\]

Let $\pi: [k] \to [k]$ denote the mapping such that $\nu_{i,\pi(i)} > \frac{\eta n}{k}$. We will next show that $\pi$ is a permutation. Supposing otherwise, there must exist $j \in [k]$ such that $\nu_{ij} < \frac{\eta n}{k}$ for all $i \in [k]$. But then
\[(\rho_i - \eta)n \leq \sum_{i} \nu_{ij} < \eta n,\]
which implies $\rho_i < 2\eta$, a contradiction.

Next, we argue that $\pi$ is not permissible. {Recall Definition~\ref{def:disc}}. Indeed, if $\pi$ were permissible, then 
\begin{align*}
    &\discrepancy(\sigma,\true) \leq \ham(\pi\circ \sigma,\true) \\
    &= \sum_{i} \sum_{j: j \neq \pi(i)} \nu_{ij} < k(k-1) \cdot \frac{\eta n}{k} < cn 
\end{align*}
where the second-to-last step uses the fact that by assumption $\nu_{ij}<\frac{\eta n}{k}$ whenever $j \neq \pi(i)$ 
and the last step follows from \eqref{defn:eta}. This leads to a contradiction and thus $\pi$ is not permissible.

Next, observe that
\begin{align*}
\sum_{i,a} \nu_{i\pi(i)}  (\nu_{a\pi(a)}-1)\DKL\left(P_{ia}, P_{\pi(i), 
 \pi(a)} \right)&\geq (1-o(1))n^2\sum_{i,a} (\rho_{i} - 2\eta) (\rho_{a} - 2\eta) \DKL\left(P_{ia}, P_{\pi(i), 
 \pi(a)} \right)\\
 &\geq \frac{(1-o(1))}{9} n^2\sum_{i,a} \rho_{i}\rho_{a} \cdot \DKL\left(P_{ia}, P_{\pi(i), 
 \pi(a)} \right).
\end{align*}
Since $\eta \leq \frac{1}{3} \min\{|\rho_i - \rho_j| :\rho_i \neq \rho_j, i,j \in [k]\}$, we claim that $\rho(\pi(i)) = \rho(i)$ for all $i \in [k]$. Under this claim, the fact that $\pi(\cdot)$ is not permissible implies that
there must exist $(i,a)$ for which $P_{ia} \neq P_{\pi(i), \pi(a)}$, and thus the above term is {at least} $Cn^2$ for some constant $C > 0$ and all sufficiently large $n$.
Hence, 
$\mathbb{E}\left[z(G, \sigma) - z(G, \sigma_0)\right] \leq -\frac{\alpha C n^2}{9}$.

To show that $\rho(\pi(i)) = \rho(i)$ for all $i \in [k]$, assume for the sake of contradiction that there exists $i \in [k]$ such that $\rho(\pi(i)) < \rho(i)$. But
\[(\rho(i) - 2\eta)n <  (\rho(i) - \eta)n - (k-1) \frac{\eta n}{k} \leq \nu_{i,\pi(i)} \leq \sum_{r} \nu_{r, \pi(i)} \leq (\rho(\pi(i)) + \eta)n\]
where the second inequality is due to the case assumption. But this implies 
\[\rho(i) - 2\eta < \rho(\pi(i)) + \eta\] and thus that \[  \rho(i) < \rho(\pi(i)) + 3\eta\]
leading to a contradiction. \\

\noindent \textbf{Case 2}: \textit{There exist $i, j, j'$ with $j \neq j'$ such that $\nu_{ij}, \nu_{ij'} \geq \frac{\eta n}{k}$.} Let $b \in [k]$ be such that $P_{jb} \neq P_{j'b}$ (which exists as we are implicitly assuming $t_c < \infty$). 
Let $a \in [k]$ be such that $\nu_{ab} \geq \frac{n(\rho_b - \eta)}{k}$, which is guaranteed to exist by the pigeonhole principle. 
Then either
\begin{equation*}
P_{ia} \neq P_{jb} \text{ or } P_{ia} \neq P_{j' b}.
\end{equation*}
{Recall that $\alpha = \nicefrac{t\log n}{n}$ is the censoring probability.}
Therefore,
\begin{align*}
-\mathbb{E}\left[z(G, \sigma) - z(G, \sigma_0)\right] &\geq \alpha \sum_{c,d,c',d'\in [k]} \nu_{cd}  (\nu_{c'd'}-1) \DKL\left(P_{cc'}, P_{dd'} \right)\\
&\geq (1-o(1))\alpha \frac{\eta n}{k} \cdot \frac{n(\rho_b - \eta)}{k} \left(\DKL\left(P_{ia}, P_{jb} \right) + \DKL\left(P_{ia}, P_{j'b} \right) \right) \\
&\geq (1-o(1))\alpha C' n^2.
\end{align*}
Summarizing both cases, we have shown that there exists a constant $C'' > 0$ 
such that
\begin{eq}
    \label{eq:difference-expectation-upper-bound}
    \mathbb{E}\left[z(G, \sigma) - z(G, \sigma_0)\right] \leq -\alpha C'' n^2 = - t C'' n \log n.
\end{eq}
for all sufficiently large $n$.
We next apply Lemma \ref{lemma:Schudy-Sviridenko} to establish concentration of the difference $z(G, \sigma) - z(G, \true)$. 
Letting $\mathscr{P}, \mathscr{A}$ denote the set of present and absent edges respectively, note that 
\begin{align*}
    X&:=\frac{1}{2} \E[z(G, \sigma) - z(G, \true)] \\
    &= \sum_{1\leq u<v\leq n}  \bigg[ \1_{\{\{u,v\} \in \mathscr{P}\}} \log \frac{P_{\sigma(u),\sigma(v)}}{P_{\true(u),\true(v)}} + \1_{\{\{u,v\} \in \mathscr{A}\}} \log \frac{1-P_{\sigma(u),\sigma(v)}}{1-P_{\true(u),\true(v)}}\bigg]. 
\end{align*}
Denote each term in the summation by $X_{uv}$. Then $X = \sum_{1\leq u<v \leq n}X_{uv}$ is a sum of independent random variables conditionally on $\true$, for any $\sigma\in [k]^n$. 
Let $Y = \sum_{1\leq u<v \leq n}|X_{uv}|$. Then for any $\delta \in (0,1)$,
\begin{align*}
&\mathbb{P}\left(z(G, \sigma) - z(G, \true) \geq (1-\delta) \mathbb{E}\left[z(G, \sigma) - z(G, \true)\right]\right) \leq \exp\bigg(2-C\delta^2 \frac{\left(\mathbb{E}[X]\right)^2}{L \mathbb{E}[Y]} \bigg),
\end{align*}
where $C$ is the universal constant from Lemma \ref{lemma:Schudy-Sviridenko}, and $L>0$ is a constant depending on $P,t$. 
To upper-bound $\mathbb{E}[Y]$, note that for any  $1 \leq u < v \leq n$, {we have $X_{uv} = 0$ whenever $\{\sigma(u),\sigma(v)\} = \{\true(u),\true(v)\}$, and}
\begin{align*}
\frac{\mathbb{E}\left[|X_{uv}| \right]}{|\mathbb{E}\left[X_{uv} \right]|}&= \Bigg(P_{\sigma_0(u), \sigma_0(v)}\left|  \log \frac{P_{\sigma(u),\sigma(v)}}{P_{\true(u),\true(v)}} \right| + \left(1 - P_{\sigma_0(u), \sigma_0(v)}\right) \left| \log \frac{1-P_{\sigma(u),\sigma(v)}}{1-P_{\true(u),\true(v)}} \right| \Bigg)\\
&\quad \quad \times \Bigg(\Big|P_{\sigma_0(u), \sigma_0(v)}\log \frac{P_{\sigma(u),\sigma(v)}}{P_{\true(u),\true(v)}} +  \left(1 - P_{\sigma_0(u), \sigma_0(v)}\right)  \log \frac{1-P_{\sigma(u),\sigma(v)}}{1-P_{\true(u),\true(v)}} \Big|\Bigg)^{-1},
\end{align*}
{whenever $\{\sigma(u),\sigma(v)\} \neq \{\true(u),\true(v)\}$. Since $0<P_{ij}<1$, taking a maximum on the right hand side of the above expression over $\sigma\in [k]^n$ with $\{\sigma(u),\sigma(v)\} \neq \{\true(u),\true(v)\}$ yields the following:} There exists a constant $C^{(1)} > 0$ depending on $P$ such that $\mathbb{E}\left[|X_{uv}| \right] \leq C^{(1)} |\mathbb{E}\left[X_{uv} \right]|$ for all $u,v$. It follows that 
\begin{align*}
\mathbb{E}[Y] &\leq C^{(1)} \sum_{1 \leq u < v \leq n} \left|\mathbb{E}[X_{uv}]\right| \leq C^{(2)} n\log n,
\end{align*} for some constant $C^{(2)}>0$.
Also, by \eqref{eq:difference-expectation-upper-bound}, $|\E[X]| \geq \frac{tC''}{2} n\log n$ for all sufficiently large $n$. Therefore there exists a constant $\tilde{C}>0$ such that
\begin{align*}
&\mathbb{P}\left(z(G, \sigma) - z(G, \true) \geq (1-\delta) \mathbb{E}\left[z(G, \sigma) - z(G, \true)\right]\right) \leq \exp\big(2-\tilde{C}\delta^2 n \log n \big).
\end{align*}
Taking $\delta = \frac{1}{2}$ and using \eqref{eq:difference-expectation-upper-bound}, we conclude that $z(G, \sigma) - z(G, \true) < 0$ with probability at least $1 - \exp(2-\frac{\tilde{C}}{4} n \log n )$. Finally, we take a union bound over the set $\{\sigma : \discrepancy(\sigma, \true) \geq cn\}$, whose cardinality is at most $k^n$. Since $\exp(2-\frac{\tilde{C}}{4} n \log n ) k^n = o(1)$, we conclude that \eqref{eq:z-bound-2} holds with high probability. 
\end{proof}

\section{Entrywise eigenvector bounds}\label{sec:entrywise} 
Our analysis of spectral algorithms relies on precise entrywise control of eigenvectors of adjacency matrices, which is guaranteed by the following result.
As before, {we work with} a fixed value of $\sigma_0$ satisfying \eqref{eq:concentration} with $\varepsilon = n^{-1/3}$.

\begin{theorem}\label{theorem:entrywise-bound-multiple}
Fix $k\in \N$, $\rho \in (0,1)^k$ such that $\sum_{i=1}^k \rho_i = 1$. {Fix} a symmetric matrix $P \in (0,1)^{k \times k}$, and let $G \sim \CSBM_n^k(\rho, P,t)$.
Define $A = A(G, y)$ for some constant $y > 0$, and let $A^{\star} = \mathbb{E}[A]$. Let $(\lambda_i, u_i)$ and $(\lambda_i^{\star}, u_i^{\star})$ denote  the $i$-th largest eigenpair of $A$ and $A^\star$ respectively {for $i\in [k]$}.
Let $r, s$ be integers satisfying $1 \leq r \leq k$ and $0 \leq s \leq k-r$.
Let $U = (u_{s+1}, \dots, u_{s+r}) \in \mathbb{R}^{n \times r}$, $U^{\star} = (u^{\star}_{s+1}, \dots, u^{\star}_{s+r}) \in \mathbb{R}^{n \times r}$, and $\Lambda^{\star} = \text{diag}(\lambda^{\star}_{s+1}, \dots, \lambda^{\star}_{s+r}) \in \mathbb{R}^{r \times r}$. Suppose that
\begin{eq} \label{delta-star-defn}
\Delta^{\star}:=(\lambda^{\star}_s - \lambda^{\star}_{s+1}) \land (\lambda^{\star}_{s+r} - \lambda^{\star}_{s + r +1}) \land \min_{i \in [r]} \left| \lambda^{\star}_{s+i}\right| > 0,    
\end{eq}
where $\lambda_0^{\star} = \infty$ and $\lambda_{k+1}^{\star} = -\infty$.
Then, with probability at least $1- O(n^{-3})$,
\begin{equation}
\inf_{O \in \mathcal{O}^{r \times r}} \left \Vert U O - A U^{\star} \left(\Lambda^{\star} \right)^{-1} \right\Vert_{2 \to \infty} \leq \frac{C}{\log \log (n) \sqrt{n}},
\end{equation}
for some $C= C(\rho, P, t,y) >0$, where $\mathcal{O}^{r \times r}$ denotes the set of $r\times r$ orthogonal matrices.
\end{theorem}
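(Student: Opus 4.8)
The plan is to derive the bound by verifying the hypotheses of the general entrywise eigenvector framework of \cite{Abbe2020} and then invoking its master $2\to\infty$ theorem. Writing $E := A - A^\star$, three structural inputs are needed for the pair $(A,A^\star)$: incoherence of the population eigenvectors, a spectral gap that dominates $\|E\|_2$, and a row-wise concentration estimate for $E$ tested against deterministic matrices. The first two are read off from the explicit block structure of $A^\star$, the third comes from Bernstein's inequality, and the $\log\log n$ factor in the conclusion is simply a convenient (non-tight) device to absorb a genuine $\mathrm{poly}(1/\log n)$ gain.

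I would begin by recording the structure of $A^\star$. Since $A^\star_{ij} = \tfrac{t\log n}{n}\bigl(P_{\sigma_0(i)\sigma_0(j)}(1+y) - y\bigr)$ for $i\neq j$ and $A^\star_{ii}=0$, we may write $A^\star = \tfrac{t\log n}{n}B - D$, where $B$ is block-constant on the community partition (with block sizes $n\rho_a(1+O(n^{-1/3}))$ by \eqref{eq:concentration}, so $B$ has at most $k$ distinct rows) and $D$ is diagonal with $\|D\|_2 = O(\log n / n)$. Hence $A^\star$ has rank at most $k$; up to the negligible correction $D$, its eigenvectors associated with nonzero eigenvalues are block-constant on blocks of size $\Theta(n)$, so $\|U^\star\|_{2\to\infty} = \Theta(1/\sqrt n)$; and its nonzero eigenvalues equal, up to the factor $t\log n$ and a $1+o(1)$ correction, the eigenvalues of the $k\times k$ matrix $P^{(y)}$ with $P^{(y)}_{ij} = \rho_j(P_{ij} - y(1-P_{ij}))$, hence are $\Theta(\log n)$ with implied constants depending only on $(\rho,P,t,y)$. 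In particular, under the hypothesis $\Delta^\star > 0$ of \eqref{delta-star-defn}, each of the three quantities defining $\Delta^\star$ is $\Theta(\log n)$, so $\Delta^\star = \Theta(\log n)$ and the relevant condition number $\max_{i\in[r]}|\lambda^\star_{s+i}| / \Delta^\star$ is $O(1)$; this supplies the incoherence-and-gap structure demanded by \cite{Abbe2020}.

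Next I would establish the two concentration inputs. Every entry of $E$ is mean zero, bounded by $1+y$, and has variance $O(\log n / n)$; since the expected degrees are $\Theta(\log n) \to \infty$, standard spectral-norm bounds for sparse random matrices with independent bounded entries give $\|E\|_2 \le C\sqrt{\log n}$ with probability $1 - O(n^{-3})$, and no trimming or regularization is needed in this logarithmic-degree regime. Thus $\|E\|_2/\Delta^\star = O(1/\sqrt{\log n}) \to 0$, which in particular makes $\kappa\|E\|_2/\Delta^\star$ smaller than any prescribed constant for large $n$, as required to run the perturbation argument. For the row-wise estimate, fix a row $m$ and a deterministic matrix $W$ (in the leave-one-out scheme the relevant $W$'s are $U^\star$ and small perturbations of the leave-one-out eigenvectors): then $E_{m\cdot}W = \sum_j E_{mj} W_{j\cdot}$ is a sum of independent mean-zero vectors of norm at most $(1+y)\|W\|_{2\to\infty}$ with variance proxy $\sum_j \mathrm{Var}(E_{mj})\|W_{j\cdot}\|_2^2 = O\bigl(\tfrac{\log n}{n}\|W\|_F^2\bigr)$, so a Bernstein-type inequality gives $\|E_{m\cdot}W\|_2 \le C\bigl(\sqrt{\tfrac{\log n}{n}}\,\|W\|_F + \log n\,\|W\|_{2\to\infty}\bigr)$ with probability $1 - O(n^{-4})$; a union bound over the $n$ rows makes this simultaneous at a cost of $O(n^{-3})$. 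This furnishes the row-concentration function required by \cite{Abbe2020}, of at most polylogarithmic size.

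Finally I would invoke the $2\to\infty$ theorem of \cite{Abbe2020} with these inputs. The crucial point is that its conclusion bounds the residual left \emph{after} the first-order term has been subtracted: since $A^\star U^\star(\Lambda^\star)^{-1} = U^\star$, we have $AU^\star(\Lambda^\star)^{-1} = U^\star + EU^\star(\Lambda^\star)^{-1}$, so $UO - AU^\star(\Lambda^\star)^{-1}$ is controlled by second-order quantities — schematically a sum of products each carrying at least one factor $\|E\|_2/\Delta^\star = O(1/\sqrt{\log n})$ alongside $O(1)$ or polylogarithmic factors and $\|U^\star\|_{2\to\infty} = O(1/\sqrt n)$. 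Multiplying out produces a bound of order $\mathrm{poly}(1/\log n)\cdot n^{-1/2}$, which is below $C/(\log\log n\,\sqrt n)$ for $n$ large, and combining the $O(n^{-3})$ failure probabilities from the spectral-norm bound and the union bound over rows yields probability $1 - O(n^{-3})$. The step I expect to be the main obstacle is this last one: matching the model quantities precisely to the hypotheses of the \cite{Abbe2020} theorem and tracking all $\mathrm{poly}(1/\log n)$ factors through its somewhat intricate conclusion to confirm the final rate beats $1/(\log\log n\,\sqrt n)$; by contrast, the Bernstein and spectral-norm estimates are routine, relying only on the boundedness of the entries of $E$ and on the expected degree being $\Theta(\log n)$, which is exactly what lets both go through without regularization.
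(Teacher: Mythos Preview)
Your overall strategy matches the paper's: verify the five hypotheses of the \cite{Abbe2020} master theorem (stated here as Theorem~\ref{theorem:abbe-entrywise}) and read off the bound. The structural claims about $A^\star$ (block form, $\Theta(\log n)$ eigenvalues, $\|U^\star\|_{2\to\infty}=\Theta(n^{-1/2})$, $\Delta^\star=\Theta(\log n)$) and the spectral-norm bound $\|E\|_2=O(\sqrt{\log n})$ are all as in the paper.

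The gap is in the row-concentration step. Your Bernstein bound
\[
\|E_{m\cdot}W\|_2 \;\le\; C\Bigl(\sqrt{\tfrac{\log n}{n}}\,\|W\|_F + \log n\,\|W\|_{2\to\infty}\Bigr)
\]
contains an additive term $\log n\,\|W\|_{2\to\infty}$ that does not vanish as $\|W\|_F/(\sqrt n\,\|W\|_{2\to\infty})\to 0$. Translated into the framework's language this forces $\varphi(x)\ge c$ for a positive constant~$c$, so $\varphi(0)\neq 0$ (violating hypothesis~(1) of Theorem~\ref{theorem:abbe-entrywise}) and, more to the point, $\varphi(\gamma)=\Theta(1)$. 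Plugging this into the conclusion $\kappa(\kappa+\varphi(1))(\gamma+\varphi(\gamma))\|U^\star\|_{2\to\infty}+\gamma\|A^\star\|_{2\to\infty}/\Delta^\star$ gives only $O(n^{-1/2})$, not $o(n^{-1/2})$. Your heuristic that ``every term carries a factor $\|E\|_2/\Delta^\star$'' is exactly what fails: the $\varphi(\gamma)$ term does not.

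The paper closes this gap by using a sharper Bennett/Poisson-tail inequality (Lemma~\ref{lemma:entrywise-4-multiple-eigenvectors}, via \cite[Lemma~5.2]{Dhara2022a}) which exploits that each entry $E_{mj}$ is nonzero only with probability $t\log n/n$. This yields the specific shape
\[
\varphi(x)\;=\;\frac{C}{\,1\vee\log(1/x)\,},
\]
which does satisfy $\varphi(0)=0$ and gives $\varphi(\gamma)=O(1/\log\log n)$ when $\gamma=O(1/\sqrt{\log n})$. That is precisely the origin of the $\log\log n$ in the statement: it is the actual rate the argument delivers, not (as you surmised) a convenient placeholder for a genuine $\mathrm{poly}(1/\log n)$ gain. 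So the fix is not bookkeeping but replacing Bernstein by Bennett in the row-concentration step.
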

\begin{corollary}\label{corollary:entrywise}
Recall the notation from Theorem~\ref{theorem:entrywise-bound-multiple}.
If all eigenvalues of $A^{\star}$ are distinct and nonzero, then with probability $1-O(n^{-3})$, for all $i \in [k]$,
\[\min_{s \in \{\pm 1\}} \left\Vert s u_i - \frac{A u_i^{\star}}{\lambda_i^{\star}}\right \Vert_{\infty} \leq \frac{C}{\log \log(n) \sqrt{n}},\]
for some $C= C(\rho, P, t,y) >0$.
\end{corollary}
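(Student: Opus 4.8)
The plan is to derive Corollary~\ref{corollary:entrywise} from Theorem~\ref{theorem:entrywise-bound-multiple} by applying the theorem repeatedly with rank-one windows. Under the assumption that all eigenvalues of $A^\star$ are distinct and nonzero, fix $i\in[k]$ and apply Theorem~\ref{theorem:entrywise-bound-multiple} with $r=1$ and $s=i-1$. The hypothesis \eqref{delta-star-defn} becomes $\Delta^\star = (\lambda^\star_{i-1}-\lambda^\star_i)\wedge(\lambda^\star_i - \lambda^\star_{i+1})\wedge|\lambda^\star_i| > 0$, which holds precisely because the eigenvalues are distinct (so the consecutive gaps are positive, with the convention $\lambda_0^\star=\infty$, $\lambda_{k+1}^\star = -\infty$) and nonzero. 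The theorem then yields, with probability $1-O(n^{-3})$,
\[
\inf_{O\in\mathcal{O}^{1\times1}}\Big\|u_i O - \frac{Au_i^\star}{\lambda_i^\star}\Big\|_{2\to\infty} \leq \frac{C_i}{\log\log(n)\sqrt{n}},
\]
for some $C_i = C_i(\rho,P,t,y)>0$.

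Next I would unpack the two norms for the rank-one case. Since $u_i O$ and $Au_i^\star/\lambda_i^\star$ are column vectors in $\R^n$ (i.e. $n\times 1$ matrices), the $2\to\infty$ norm of their difference is exactly the $\ell_\infty$ norm of the difference as vectors: $\|M\|_{2\to\infty} = \max_j \|M_{j\cdot}\|_2 = \max_j |M_{j1}|$ when $M$ has a single column. Moreover $\mathcal{O}^{1\times1} = \{+1,-1\}$, so the infimum over $O$ is precisely the minimum over $s\in\{\pm1\}$ of $\|s u_i - Au_i^\star/\lambda_i^\star\|_\infty$. Thus the conclusion of Theorem~\ref{theorem:entrywise-bound-multiple} specializes to exactly the bound claimed in the corollary for this fixed $i$.

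Finally I would take a union bound over $i\in[k]$: each of the $k$ events fails with probability $O(n^{-3})$, and $k$ is a fixed constant, so all of them hold simultaneously with probability $1-O(n^{-3})$. Setting $C = \max_{i\in[k]} C_i$ (still a constant depending only on $\rho,P,t,y$) gives the uniform bound $\min_{s\in\{\pm1\}}\|s u_i - Au_i^\star/\lambda_i^\star\|_\infty \leq C/(\log\log(n)\sqrt{n})$ for all $i\in[k]$ simultaneously, which is the assertion of the corollary.

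This argument is essentially bookkeeping; there is no serious obstacle. The only points requiring a moment of care are the conventions on the boundary eigenvalues $\lambda_0^\star$ and $\lambda_{k+1}^\star$ (needed so that $\Delta^\star>0$ holds at the extreme indices $i=1$ and $i=k$), and the observation that for a single-column matrix the $2\to\infty$ norm coincides with the vector $\ell_\infty$ norm and that $\mathcal{O}^{1\times1}=\{\pm1\}$, which is what converts the orthogonal-matrix alignment in the theorem into the sign ambiguity in the corollary.
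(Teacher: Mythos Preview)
Your proposal is correct and matches the paper's approach exactly: the paper does not give a standalone proof of the corollary but indicates (in the Remark following Theorem~\ref{theorem:abbe-entrywise}) that it follows by specializing to $r=1$, $s=i-1$, which is precisely what you do, together with the obvious union bound over $i\in[k]$.
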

The proof of Theorem \ref{theorem:entrywise-bound-multiple} relies on an entrywise eigenvector perturbation bound derived in~\cite{Abbe2020}. 
We provide the  statement for a general random matrix $A$ here for completeness, and will verify these general conditions subsequently for $G\sim \CSBM_n^k(\rho,P,t)$.
Also, we reuse the notation from Theorem~\ref{theorem:entrywise-bound-multiple}. 
Let $H = U^T U^{\star}$, with singular value decomposition given by $H = \bar{W} \bar{\Sigma} \bar{V}^T$. Let $\text{sgn}(H) = \bar{W} \bar{V}^T \in \mathbb{R}^{r \times r}$, which is an orthonormal matrix, called the \emph{matrix sign function} \cite{Gross2011}.
Given this setup, \cite[Theorem 2.1 Part (2)]{Abbe2020} gives the following result.
\begin{theorem}[Theorem 2.1 Part (2), \cite{Abbe2020}]\label{theorem:abbe-entrywise}
Let $A$ be a random matrix as described above. Suppose that the following assumptions are satisfied, for some $\gamma > 0$ and $\varphi(x) : \mathbb{R}_+ \to \mathbb{R}_+$. 
\begin{enumerate}
    \item (Properties of $\varphi$) $\varphi(x)$ is continuous and non-decreasing in $\mathbb{R}_+$, $\varphi(0) = 0$, and $\frac{\varphi(x)}{x}$ is non-increasing in $\mathbb{R}_+$.
    \item (Incoherence) $\Vert A^{\star} \Vert_{2 \to \infty} \leq \gamma  \Delta^{\star}$, where $\Delta^\star$ is defined in \eqref{delta-star-defn}.
    \item (Row- and column-wise independence) For any $m \in [n]$, the entries in the $m$th row and column are independent with others, i.e. $\{A_{ij} : i = m \text{ or } j = m\}$ are independent of $\{A_{ij} : i \neq m, j \neq m\}$.
    \item (Spectral norm concentration) Define $\kappa = \frac{1}{\Delta^{\star}} \max_{i \in [r]}|\lambda^{\star}_{s+i}|$, and suppose $32 \kappa \max\{\gamma, \varphi(\gamma)\} \leq 1$. Then, for some $\delta_0 \in (0,1)$,
    \[\mathbb{P}\left(\Vert A - A^{\star} \Vert_2 \leq \gamma \Delta^{\star}\right) \geq 1 - \delta_0.\]
    \item (Row concentration)  There exists $\delta_1 \in (0,1)$ such that 
    for any $m \in [n]$ and $W \in \mathbb{R}^{n \times r}$,
    \begin{eq}
     &\mathbb{P}\Bigg(\left\Vert(A - A^{\star})_{m, \cdot} W \right\Vert_2  \leq \Delta^{\star} \Vert W \Vert_{2 \to \infty} \varphi\left(\frac{\Vert W \Vert_F}{\sqrt{n} \Vert W \Vert_{2 \to \infty}} \right) \Bigg) \geq 1 - \frac{\delta_1}{n}. \label{eq:row-concentration}   
    \end{eq}
\end{enumerate}
Then, with probability at least $1-\delta_0 - 2\delta_1$,
\begin{align*}
&\left\Vert U \mathrm{sgn}(H) - A U^{\star} (\Lambda^{\star})^{-1} \right \Vert_{2 \to \infty} \leq C_0\Big(\kappa (\kappa + \varphi(1)) (\gamma + \varphi(\gamma)) \Vert U^{\star} \Vert_{2 \to \infty} + \frac{\gamma}{\Delta^{\star}} \Vert A^{\star} \Vert_{2 \to \infty}\Big),    
\end{align*}
where $C_0>0$ is an absolute constant.
\end{theorem}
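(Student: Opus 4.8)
The plan is to reproduce the leave-one-out analysis of \cite{Abbe2020}: combine a global, operator-norm eigenvector perturbation bound with $n$ auxiliary ``leave-one-out'' matrices and close the estimate through a self-improving (fixed-point) inequality. Write $\mathrm{err}:=\|U\,\mathrm{sgn}(H)-AU^{\star}(\Lambda^{\star})^{-1}\|_{2\to\infty}$ for the quantity to be bounded. First I would extract from assumptions (2)--(4) the usual operator-norm consequences: on the event $\{\|A-A^{\star}\|_2\le\gamma\Delta^{\star}\}$ (probability $\ge 1-\delta_0$), Weyl's inequality gives $|\lambda_{s+i}-\lambda^{\star}_{s+i}|\le\gamma\Delta^{\star}$, so, using $\lambda^{\star}_s-\lambda^{\star}_{s+1},\ \lambda^{\star}_{s+r}-\lambda^{\star}_{s+r+1}\ge\Delta^{\star}$ and $32\kappa\gamma\le1$, the eigenvalues $\lambda_{s+1},\dots,\lambda_{s+r}$ stay separated from the rest of the spectrum of $A$ by a gap of order $\Delta^{\star}$. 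The Davis--Kahan $\sin\Theta$ theorem then yields $\|UU^{T}-U^{\star}(U^{\star})^{T}\|_2=O(\gamma)$, and, via the standard relation between the $\sin\Theta$ distance and the matrix-sign alignment, $\|U\,\mathrm{sgn}(H)-U^{\star}\|_F=O(\sqrt r\,\gamma)$, together with $\|\mathrm{sgn}(H)-H\|_2=O(\gamma^2)$ and $\|\Lambda^{-1}\mathrm{sgn}(H)-\mathrm{sgn}(H)(\Lambda^{\star})^{-1}\|_2=O(\kappa\gamma/\Delta^{\star})$. These are the ``clean'' ingredients out of which the final bound is assembled.

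Next, for a fixed row $m\in[n]$ I would use the eigen-identity $A_{m\cdot}U=U_{m\cdot}\Lambda$, i.e.\ $U_{m\cdot}=A_{m\cdot}U\Lambda^{-1}$, to write
\[
e_m^{T}\!\left(U\,\mathrm{sgn}(H)-AU^{\star}(\Lambda^{\star})^{-1}\right)=A_{m\cdot}\!\left(U\,\mathrm{sgn}(H)-U^{\star}\right)(\Lambda^{\star})^{-1}+A_{m\cdot}U\!\left(\Lambda^{-1}\mathrm{sgn}(H)-\mathrm{sgn}(H)(\Lambda^{\star})^{-1}\right),
\]
and then split $A_{m\cdot}=A^{\star}_{m\cdot}+(A-A^{\star})_{m\cdot}$. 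The $A^{\star}_{m\cdot}$-contributions are deterministic and are bounded by $\|A^{\star}\|_{2\to\infty}$ times the operator-norm quantities above, producing the $\tfrac{\gamma}{\Delta^{\star}}\|A^{\star}\|_{2\to\infty}$ term and the $\kappa(\kappa+\varphi(1))(\gamma+\varphi(\gamma))\|U^{\star}\|_{2\to\infty}$-type terms. The genuinely hard piece is $\|(A-A^{\star})_{m\cdot}\,U\,\mathrm{sgn}(H)\|_2$, where the row-concentration assumption (5) cannot be applied directly because $U$ depends on the $m$-th row and column of $A$.

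To decouple, introduce the leave-one-out matrix $A^{(m)}$, equal to $A$ except that its $m$-th row and column are replaced by those of $A^{\star}$; by assumption (3), $A^{(m)}$ is independent of $\{A_{mj}\}_{j}$. Let $U^{(m)}$ be the analogous block of $r$ eigenvectors of $A^{(m)}$ and $H^{(m)}=(U^{(m)})^{T}U^{\star}$. Since $A-A^{(m)}$ is supported on row/column $m$, $\|A-A^{(m)}\|_2\le 2\|(A-A^{\star})_{m\cdot}\|_2\le 2\gamma\Delta^{\star}$, so Davis--Kahan applies to $A^{(m)}$ as well, and comparing $A$ with $A^{(m)}$ gives $\|U\,\mathrm{sgn}(H)-U^{(m)}\mathrm{sgn}(H^{(m)})\|_F=O(\|(A-A^{\star})_{m\cdot}\|_2/\Delta^{\star})$. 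Then
\[
\|(A-A^{\star})_{m\cdot}\,U\,\mathrm{sgn}(H)\|_2\le\|(A-A^{\star})_{m\cdot}\|_2\,\|U\,\mathrm{sgn}(H)-U^{(m)}\mathrm{sgn}(H^{(m)})\|_F+\|(A-A^{\star})_{m\cdot}\,U^{(m)}\mathrm{sgn}(H^{(m)})\|_2 .
\]
For the second term, $W:=U^{(m)}\mathrm{sgn}(H^{(m)})$ is independent of row $m$ with $\|W\|_F=\sqrt r$ and $\|W\|_F/(\sqrt n\,\|W\|_{2\to\infty})\le1$, so assumption (5) (conditionally, with probability $\ge1-\delta_1/n$) bounds it by $\Delta^{\star}\|W\|_{2\to\infty}\varphi(1)$ using that $\varphi$ is non-decreasing, and $\|W\|_{2\to\infty}\le\|U^{\star}\|_{2\to\infty}+\|U^{(m)}\mathrm{sgn}(H^{(m)})-U^{\star}\|_{2\to\infty}$, the last term being controlled by the global $2\to\infty$ error for $A^{(m)}$, for which row $m$ is harmless. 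Assembling all these bounds over the rows and taking a union bound over $m$, one arrives at a self-improving inequality of the shape
\[
\mathrm{err}\ \le\ C_0\Big(\kappa(\kappa+\varphi(1))(\gamma+\varphi(\gamma))\|U^{\star}\|_{2\to\infty}+\tfrac{\gamma}{\Delta^{\star}}\|A^{\star}\|_{2\to\infty}\Big)+c\,\mathrm{err},
\]
with a coefficient $c<1$ that the hypothesis $32\kappa\max\{\gamma,\varphi(\gamma)\}\le1$ keeps bounded away from $1$; rearranging yields the stated bound on an event of probability at least $1-\delta_0-2\delta_1$.

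The main obstacle is setting up and closing this fixed-point argument with the correct constants. One must simultaneously (i) control, for every $m$, the closeness of $U^{(m)}$ to $U$ at the rate $\|(A-A^{\star})_{m\cdot}\|_2/\Delta^{\star}$ and the size of the $m$-th row of $U^{(m)}$ (which is of order $\|A^{\star}\|_{2\to\infty}/\Delta^{\star}$ because $A^{(m)}_{m\cdot}$ is deterministic and incoherent); (ii) handle the matrix-sign bookkeeping relating the three alignments $H$, $\mathrm{sgn}(H)$ and $\mathrm{sgn}(H^{(m)})$ via perturbation bounds for the matrix sign function; and (iii) verify that the accumulated self-referential coefficient is genuinely below $1$, which is precisely what forces the careful tracking of all $\kappa$-, $\gamma$- and $\varphi$-dependencies and explains the shape of the final bound. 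This is exactly the structure of the proof in \cite{Abbe2020}, and I would follow it; for the present paper the statement is used only as a black box (its hypotheses being verified for $A(G,y)$ in Theorem~\ref{theorem:entrywise-bound-multiple}).
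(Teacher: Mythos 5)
This statement is not proved in the paper at all: it is quoted verbatim as \cite[Theorem 2.1 Part (2)]{Abbe2020} and used purely as a black box, with the paper's actual work being the verification of its five hypotheses for $A(G,y)$ in Theorem~\ref{theorem:entrywise-bound-multiple}. Your sketch is therefore being compared against the proof in the cited reference rather than anything in this paper, and on that score it is a faithful outline of the correct argument: the combination of Weyl/Davis--Kahan operator-norm bounds, the row identity $U_{m\cdot}=A_{m\cdot}U\Lambda^{-1}$, the leave-one-out matrices $A^{(m)}$ to decouple row $m$ from the eigenvectors before invoking the row-concentration hypothesis, and the self-improving inequality closed by $32\kappa\max\{\gamma,\varphi(\gamma)\}\le 1$ is exactly the structure of the proof in \cite{Abbe2020}. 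The sketch is schematic in places (the bookkeeping for $\mathrm{sgn}(H)$ versus $\mathrm{sgn}(H^{(m)})$, the control of the $m$-th row of $U^{(m)}$, and the accounting that yields the failure probability $\delta_0+2\delta_1$ rather than $\delta_0+\delta_1$ all require care), but you flag these as the genuine obstacles rather than glossing over them, and for the purposes of this paper nothing beyond the statement itself is needed.
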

\begin{remark}\label{remark:single-eigenvector}
Theorem \ref{theorem:abbe-entrywise} can be applied to the recovery of a single eigenvector $u_l$ by setting $r = 1$ and $s = l-1$. In that case, the requirement \eqref{eq:row-concentration} simplifies to 
\begin{align*}
&\mathbb{P}\left(\left|(A - A^{\star})_{m, \cdot} \cdot w \right| \leq \Delta^{\star} \Vert w \Vert_{\infty} \varphi\left(\frac{\Vert w \Vert_2}{\sqrt{n} \Vert w \Vert_{\infty}} \right) \right) \geq 1 - \frac{\delta_1}{n}    
\end{align*}
for each $w \in \mathbb{R}^n$. The conclusion becomes 
\begin{align*}
&\min_{s \in \{\pm 1\}} \left\Vert s u_k - \frac{A u_k^{\star}}{\lambda_k^{\star}}\right \Vert_{\infty}\leq C_0\Big( \kappa (\kappa + \varphi(1)) (\gamma + \varphi(\gamma)) \Vert u_k^{\star} \Vert_{\infty} + \frac{\gamma}{\Delta^{\star}} \Vert A^{\star} \Vert_{2 \to \infty}\Big).    
\end{align*}
\end{remark}
In order to prove Theorem \ref{theorem:entrywise-bound-multiple}, we verify the five conditions of Theorem \ref{theorem:abbe-entrywise}. The following lemma states properties of the eigenspace of $A^\star$. 
\begin{lemma}\label{lemma:spectral-gap}
Let $G \sim \CSBM_n^k(\rho, P, t)$ where $\rho \in (0,1)^k$ is such that $\sum_i\rho_i = 1$, and $P \in (0,1)^{k \times k}$ is a symmetric matrix.
{For $y\in \R$}, define $A = A(G, y)$ as in Definition~\ref{def:signed-adj}. Denote $A^{\star} = \mathbb{E}[A]$ and let $(\lambda_l^\star,u_l^\star)_{l\in [k]}$ be the top $k$ eigenpairs. Then there {exist} constants $(\nu_l)_{l\in [k]}$ depending on $P$, $\rho$, $t$, and $y$ such that 
\begin{eq}\label{eq:lambda-star-asymp}
    \lambda_l^{\star} = (1+o(1)){\nu_l} \log(n)  \quad \text{for all } l\in [k]
\end{eq}
Moreover, {if the} $\nu_l$'s are distinct, then there exist constants $(\zeta_{lj})_{l,j\in [k]}$ depending on $P$, $\rho$, $t$, and $y$ such that
\begin{eq}\label{eq:u-star-asymp}
    u_{l{w}}^\star = (1+o(1)) \frac{\zeta_{lj}}{\sqrt{n}}  \quad \text{ for all }{w}\in\{v: \true(v) = j\}
\end{eq}
\end{lemma}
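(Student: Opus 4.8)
The plan is to compare $A^{\star}$ with an exactly rank-$k$ matrix and then run standard perturbation theory. Write $\tau = t\log n/n$, let $Z\in\{0,1\}^{n\times k}$ be the membership matrix ($Z_{uj}=\mathbbm{1}\{\true(u)=j\}$, so $Z^\top Z = \diag(n_1,\dots,n_k)$), and let $B\in\R^{k\times k}$ be the symmetric matrix $B_{ab}=P_{ab}-y(1-P_{ab})=(1+y)P_{ab}-y$. Since self-loops are absent, $A^{\star} = \tau\big(ZBZ^\top - D_0\big)$ with $D_0 := \diag\big(B_{\true(u)\true(u)}\big)_{u\in[n]}$; because $|B_{ab}|\le 1+2y$, we have $\|\tau D_0\|_2 = O(\log n/n) = o(1)$, so $A^{\star}$ is the rank-$k$ matrix $\tau ZBZ^\top$ up to an $o(1)$ spectral-norm perturbation. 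Throughout one works on the event \eqref{eq:concentration} with $\varepsilon = n^{-1/3}$, so that $n_j = n\rho_j(1+O(n^{-1/3}))$, and in particular $Z^\top Z$ is invertible for large $n$.

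First I would diagonalise $ZBZ^\top$ exactly. Set $M_n := (Z^\top Z)^{1/2}B(Z^\top Z)^{1/2}$, a symmetric $k\times k$ matrix with orthonormal eigenpairs $(\lambda_l(M_n),w_l)_{l\in[k]}$. A direct computation shows that $\hat u_l := Z(Z^\top Z)^{-1/2}w_l$ satisfies $ZBZ^\top\hat u_l = \lambda_l(M_n)\hat u_l$, $\|\hat u_l\|_2 = \|w_l\|_2 = 1$, and $\hat u_l^\top \hat u_m = \delta_{lm}$; thus the nonzero spectrum of $ZBZ^\top$ is $\{\lambda_l(M_n)\}_{l\in[k]}$ with unit eigenvectors $\hat u_l$ that are constant on communities, taking value $(w_l)_j/\sqrt{n_j}$ on community $j$. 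Since $(Z^\top Z)^{1/2} = \sqrt n\,\diag(\sqrt{\rho_j})(1+O(n^{-1/3}))$, one gets $M_n = nM + O(n^{2/3})$ in spectral norm, with $M := \diag(\rho)^{1/2}B\diag(\rho)^{1/2}$, so Weyl's inequality gives $\lambda_l(M_n) = n\mu_l + O(n^{2/3})$, $\mu_l := \lambda_l(M)$. Applying Weyl's inequality to $A^{\star} = \tau ZBZ^\top - \tau D_0$ with $\|\tau D_0\|_2 = o(1)$ then yields
\[
\lambda_l^{\star} = \tau\lambda_l(M_n) + o(1) = t\mu_l\log n\,(1+o(1)), \qquad l\in[k],
\]
and $\lambda_{k+1}^{\star} = o(1)$ since $\lambda_{k+1}(\tau ZBZ^\top)=0$. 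Taking $\nu_l := t\mu_l$ proves \eqref{eq:lambda-star-asymp}; note $M$ has the same spectrum as $B\diag(\rho)$ (equal to $P^{\sss (y)}$ from the remark after Theorem~\ref{thm:two-matrix-k-communities}), which at $y=0$ is $P\diag(\rho)$.

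For \eqref{eq:u-star-asymp}, assume the $\nu_l$ are distinct — hence, in the regime where the lemma is invoked, $y$ also avoids the finitely many values with $\det B = 0$, so the $\nu_l$ are distinct and nonzero. Then $M$ has eigenvalue gaps $\Theta(1)$, so $M_n = nM + O(n^{2/3})$ has gaps $\Theta(n)$, and the Davis--Kahan theorem gives $w_l = w_l^{(0)} + O(n^{-1/3})$ after fixing signs, where $w_l^{(0)}$ is the $l$-th eigenvector of $M$; hence $(\hat u_l)_u = (w_l)_j/\sqrt{n_j} = \zeta_{lj}/\sqrt n\,(1+O(n^{-1/3}))$ for $\true(u)=j$, with $\zeta_{lj} := (w_l^{(0)})_j/\sqrt{\rho_j}$. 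Finally, the gap between $\lambda_l(\tau ZBZ^\top)$ and the rest of $\mathrm{spec}(\tau ZBZ^\top)$ is $\Theta(\log n)$ (the $\nu_l$ being distinct and nonzero), so Davis--Kahan applied to $A^{\star} = \tau ZBZ^\top - \tau D_0$ gives $\min_{s\in\{\pm1\}}\|su_l^{\star} - \hat u_l\|_2 = O\big(\|\tau D_0\|_2/\log n\big) = O(1/n)$; since $\|\cdot\|_\infty\le\|\cdot\|_2$ and $\zeta_{lj}/\sqrt n = \Theta(n^{-1/2})$ dominates $1/n$, this yields $u_{lu}^{\star} = \zeta_{lj}/\sqrt n\,(1+o(1))$, the sign being absorbed into $\zeta_{lj}$ (eigenvectors are defined only up to sign).

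The bulk of the work is bookkeeping: reducing the $n$-dimensional spectral problem to the fixed-size matrices $M_n$ and $M$ via the $ZBZ^\top$ factorization and the exact formula $\hat u_l = Z(Z^\top Z)^{-1/2}w_l$, and then checking that all the $o(1)$ errors are uniform under the conditioning \eqref{eq:concentration}. The one point I expect to require genuine care is the eigenvector estimate: it relies on the spectral gap of $\tau ZBZ^\top$ around $\lambda_l$ being of order $\log n$, which forces the $\nu_l$ to be not merely distinct but also nonzero — if some $\nu_l=0$, the corresponding eigenvalue of $A^{\star}$ lies within $O(1)$ of the $n-k$ near-zero eigenvalues and is no longer isolated at scale $\log n$, so the $l$-th eigenvector of $A^{\star}$ ceases to be stable. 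Such degenerate choices of $y$ are excluded in every application of the lemma.
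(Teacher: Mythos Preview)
Your proof is correct and follows essentially the same route as the paper: both replace $A^\star$ by the exactly rank-$k$ block matrix obtained by filling in the diagonal (the paper's $B^\star$ is your $\tau ZBZ^\top$) and then invoke Weyl and Davis--Kahan, with the paper deferring the details to \cite{Dhara2022a,Dhara2022b} while you carry them out explicitly via the $k\times k$ reduction $M_n=(Z^\top Z)^{1/2}B(Z^\top Z)^{1/2}$. You also correctly flag that the eigenvector conclusion needs the $\nu_l$ to be nonzero as well as distinct (otherwise $\lambda_l^\star$ is not isolated from the $n-k$ near-zero eigenvalues at scale $\log n$); the lemma statement omits this hypothesis, but the paper's own proof tacitly assumes it (``$B^\star$ has rank $k$'') and it holds in every application.
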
  
\begin{proof}
{Let $B^{\star}$ be a block matrix, where the $i$th block has size equal to either $[\lfloor \rho_i n \rfloor$ or $ \lceil \rho_i n \rceil$, and the $(i,j)$ block takes value $\alpha(P_{ij} - y(1-P_{ij}))$. Then $B^{\star}$ is an approximation of $A^{\star}$ up to deviations in community sizes, permutation of community labels, and a diagonal correction (since $A^{\star}$ is a zero-diagonal matrix). Let $(\overline{\lambda}_l, \overline{u}_l)_{l \in [k]})$ be the top $k$ eigenpairs of $B^{\star}$. Since $B^{\star}_{ij} = \Theta\left(\frac{\log n}{n}\right)$ for all $i,j$, a straightforward adaptation of the proof of \cite[Lemma 3.2]{Dhara2022b} implies the existence of constants $(\nu_l)_{l\in [k]}$ depending on $P$, $\rho$, $t$, and $y$ such that
\begin{equation*}
    \overline{\lambda}_l^{\star} = (1+o(1)){\nu_l} \log(n)  \quad \text{for all } l\in [k].
\end{equation*}
By Weyl's theorem,
\[\left|\lambda_l - \overline{\lambda}_l \right| \leq \Vert A^{\star} - B^{\star} \Vert_2 \leq \Vert A^{\star} - B^{\star} \Vert_F\]
for all $l \in [k]$. Recall that $\sigma_0$ is assumed to satisfy \eqref{eq:concentration} with $\varepsilon = n^{-1/3}$. It follows that
\[\Vert A^{\star} - B^{\star} \Vert_F = O(\sqrt{n\cdot n^{2/3} \cdot \alpha^2}) = o(\log n),\]
which establishes \eqref{eq:lambda-star-asymp}.}

{
Regarding the eigenvectors, observe that due to the block nature of $B^{\star}$, the eigenvectors of $B^{\star}$ take on the form given by \eqref{eq:u-star-asymp}. Since $n_l = (1+o(1)) n\rho_l$ and $\lambda_l = (1+o(1)) \lambda_l^{\star}$ for all $l \in [k]$, it follows that the eigenvectors of $A^{\star}$ also take on the form given by \eqref{eq:u-star-asymp} (see \cite[Lemma 5.3]{Dhara2022a}). 
}
\end{proof}

Among the conditions in Theorem~\ref{theorem:abbe-entrywise}, only the fourth and the fifth are substantial. We verify them in the two lemmas below.
\begin{lemma}\label{lemma:entrywise-3}
Let $A$ be a symmetric and zero-diagonal random matrix. Suppose that the entries $\{A_{ij}: i < j \}$ are independent, $A_{ij} \in [a,b]$ for two constants $a < b$, and  $\mathbb{E}[|A_{ij}|] \leq p$ for all $i, j$, where $\frac{c_0 \log n}{n} \leq p \leq 1- c_1$ for constants $c_0, c_1 > 0$. Then, for any $c > 0$, there exists $c' > 0$ such that
\[\mathbb{P}\left(\Vert A - \mathbb{E}[A] \Vert_2 \leq c' \sqrt{np} \right) \geq 1 - 2n^{-c}.\]
\end{lemma}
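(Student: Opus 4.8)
The plan is to split the estimate into a bound on $\E\|A-\E[A]\|_2$ and a concentration-of-measure step. Write $M := A - \E[A]$; this matrix is symmetric with zero diagonal and has independent entries $\{M_{ij}\}_{i<j}$ satisfying $\E[M_{ij}]=0$ and $|M_{ij}|\le b-a$. Since $|A_{ij}|\le B_0:=\max(|a|,|b|)$, we also have $\operatorname{Var}(M_{ij})\le\E[A_{ij}^2]\le B_0\,\E[|A_{ij}|]\le B_0 p$, so the relevant matrix parameters are $\sigma^2:=\max_i\sum_{j}\operatorname{Var}(M_{ij})\le n B_0 p$ and $\sigma_\ast:=\max_{ij}\|M_{ij}\|_\infty\le b-a$.

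The first main ingredient is a sharp nonasymptotic bound for the spectral norm of a symmetric matrix with independent entries (of Bandeira--van Handel type), which gives $\E\|M\|_2\le C_1\big(\sigma+\sigma_\ast\sqrt{\log n}\big)\le C_2\big(\sqrt{np}+\sqrt{\log n}\big)$ for suitable constants. Because $p\ge c_0\log n/n$ we have $\sqrt{\log n}\le\sqrt{np/c_0}$, hence $\E\|M\|_2\le C_3\sqrt{np}$. If one prefers a self-contained argument, this expected-norm bound can instead be produced by an $\varepsilon$-net over the sphere together with a Bernstein estimate for the quadratic forms $x^\top M x$ and a light/heavy-couple decomposition in the style of Feige--Ofek, or imported from the analogous statement for signed adjacency matrices in \cite{Dhara2022a,Dhara2022b}.

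The second ingredient is Talagrand's concentration inequality for convex Lipschitz functions of independent bounded coordinates, applied to $f\big((x_{ij})_{i<j}\big):=\|A(x)-\E[A]\|_2$, where $A(x)$ is the symmetric matrix with those off-diagonal entries and zero diagonal. The function $f$ is convex, being the spectral norm of a matrix affine in the coordinates, and it is $\sqrt2$-Lipschitz in the Euclidean metric since $\|A(x)-A(x')\|_2\le\|A(x)-A(x')\|_F=\sqrt2\,\|x-x'\|_2$; moreover each coordinate varies in an interval of length at most $b-a$. Talagrand's inequality therefore yields $\PR\big(\|M\|_2\ge\E\|M\|_2+s\big)\le 2\exp\!\big(-s^2/(C_4(b-a)^2)\big)$. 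Choosing $s=\sqrt{C_4(b-a)^2(c+1)\log n}$ makes the right-hand side at most $2n^{-c}$, and since $\log n\le np/c_0$ this $s$ is at most $C_5\sqrt{np}$. Combining with the first step gives $\|M\|_2\le(C_3+C_5)\sqrt{np}$ with probability at least $1-2n^{-c}$, which is the claim with $c'=C_3+C_5$.

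The step I expect to be the real obstacle is the expected-norm bound: obtaining the optimal order $\sqrt{np}$ rather than the $\sqrt{np\log n}$ that a naive matrix-Bernstein bound would give requires either the sharp inhomogeneous random-matrix estimates or a careful light/heavy-couple argument. Once that is available, the variance bookkeeping and the Talagrand step are routine, the only mild care being to keep all constants uniform over $p$ in the stated range $\frac{c_0\log n}{n}\le p\le 1-c_1$.
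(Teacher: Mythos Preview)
Your argument is correct, but the paper proceeds quite differently. Rather than separating the expected norm from its fluctuations, the paper splits the matrix itself: writing $A=A^{+}-A^{-}$ with $A^{+}_{ij}=\max\{A_{ij},0\}$ and $A^{-}_{ij}=-\min\{A_{ij},0\}$, it observes that $\E[A^{+}_{ij}],\E[A^{-}_{ij}]\le\E[|A_{ij}|]\le p$, rescales $A^{+}/b$ and $A^{-}/|a|$ so that their entries lie in $[0,1]$, and then directly invokes the spectral-norm concentration for sparse $[0,1]$-valued matrices \cite[Theorem~5]{Hajek2016} on each piece, combining via the triangle inequality and a union bound.

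The tradeoff is the one you already anticipated. The paper's decomposition sidesteps the hard step you flag: it never needs the sharp $\E\|M\|_2\lesssim\sigma+\sigma_\ast\sqrt{\log n}$ bound, because the black box from \cite{Hajek2016} already packages both the expectation bound and the concentration for $[0,1]$-valued matrices. Your route is more modular and conceptually transparent (variance profile $\Rightarrow$ expected norm, bounded entries $\Rightarrow$ Talagrand tail), and it would generalize more readily beyond the sparse regime, but it leans on the heavier Bandeira--van Handel or Feige--Ofek machinery exactly where the paper gets away with a two-line reduction.
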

\begin{proof}
Let $A = A^+ - A^-$, where $A^+_{ij} = \max\{A_{ij}, 0\}$ and $A^-_{ij} = - \min\{A_{ij}, 0\}$ for all $i,j$. Then
\begin{align}
\Vert A - \mathbb{E}[A] \Vert_2 
\leq \Vert A^+ - \mathbb{E}[A^+] \Vert_2  + \Vert A^- - \mathbb{E}[A_-] \Vert_2. \label{eq:spectral-norm-triangle}
\end{align}
Note that $A^+$ and $A^-$ are symmetric and zero-diagonal matrices with independent upper-triangular entries. Also, note that for all $i \neq j$,
\[\max\big\{\mathbb{E}[A_{ij}^+],\mathbb{E}[A_{ij}^-] \big\} \leq \mathbb{E}[|A_{ij}|] \leq p.\]
If $b \leq 0$, then $\Vert A^+ - \mathbb{E}[A^+] \Vert_2 = 0$. Otherwise, suppose $b > 0$. By \cite[Theorem 5]{Hajek2016}, for any $c > 0$, there exists $c_+ > 0$ such that
\begin{align}
&\mathbb{P}\left(\Vert A^+ - \mathbb{E}[A^+] \Vert_2 > c_+ \sqrt{b}\cdot  \sqrt{np}  \right) =  \mathbb{P}\bigg(\bigg\Vert \frac{1}{b}A^+ - \frac{1}{b}\mathbb{E}[A^+] \bigg\Vert_2 > c_+ \sqrt{\frac{np}{b}}   \bigg) \leq n^{-c}. \label{eq:spectral-norm-b}
\end{align}
Similarly, if $a \geq 0$, then $\Vert A^- - \mathbb{E}[A^-] \Vert_2 = 0$. Otherwise, suppose $a < 0$. By \cite[Theorem 5]{Hajek2016}, for any $c > 0$, there exists $c_- > 0$ such that 
\begin{align}
&\mathbb{P}\left(\Vert A^- - \mathbb{E}[A^-] \Vert_2 > c_- \sqrt{|a|} \sqrt{np}  \right) =  \mathbb{P}\bigg(\bigg\Vert \frac{1}{|a|}A^- - \frac{1}{|a|}\mathbb{E}[A^-] \bigg\Vert_2 > c_- \sqrt{\frac{np}{|a|}}   \bigg) \leq n^{-c}. \label{eq:spectral-norm-a}
\end{align}
Stated above in terms of upper bound on probabilities.
Take $c' = c_+ \sqrt{\max\{b,0\}} + c_- \sqrt{|\min\{a,0\}|}$. Combining \eqref{eq:spectral-norm-triangle}, \eqref{eq:spectral-norm-b}, and \eqref{eq:spectral-norm-a} along with a union bound, the proof is complete.
\end{proof}
\begin{lemma}\label{lemma:entrywise-4-multiple-eigenvectors}
Let $r \in \N$ be a constant, and $W \in \mathbb{R}^{n \times r}$ be a fixed matrix. Let $\{Z_i\}_{i=1}^n$ be independent random variables where $\mathbb{P}(Z_i = 1) = p_i$, $\mathbb{P}(Z_i  = -y) = q_i$, and $\mathbb{P}(Z_i = 0) = 1 - p_i - q_i$. Finally, let $\overline{Z} \in \mathbb{R}^n$, where $\overline{Z}_i = Z_i - \mathbb{E}[Z_i]$ for $i \in [n]$. Then for any $\beta \geq 0$,
\begin{align*}
&\mathbb{P}\bigg(\big\Vert \overline{Z}^T W\big\Vert_2 \geq r\frac{\max\{1,y\} (2+\beta) n}{1 \lor \log \big(\frac{\sqrt{n} \Vert W \Vert_{2 \to \infty}}{\Vert W \Vert_F} \big)} \Vert W \Vert_{2 \to \infty} \max_i\{p_i + q_i\}\bigg) \\
&\leq 2r \exp\big(-\beta n \max_i\{p_i + q_i\} \big).    
\end{align*}
\end{lemma}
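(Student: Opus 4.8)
The plan is to reduce the claim to a one-dimensional exponential tail bound, then run a Chernoff argument with a carefully chosen parameter. Write $a=\Vert W\Vert_{2\to\infty}$, $b=\Vert W\Vert_{F}$, $L=\max\{1,y\}$, $m=\max_i\{p_i+q_i\}$, and $\ell=1\lor\log(\sqrt{n}\,a/b)$. Since $b^2=\sum_i\Vert W_{i\cdot}\Vert_2^2\le n a^2$, we have $\sqrt n\,a/b\ge 1$ and $\ell\ge 1$. First I would reduce to a single column: because $r\ge 1$, the event $\Vert \overline{Z}^TW\Vert_2\ge r u$ implies $\Vert \overline{Z}^TW\Vert_2\ge\sqrt r\,u$, which forces $|\overline{Z}^TW_{\cdot j}|\ge u$ for some $j\in[r]$; so by a union bound it is enough to show, for every fixed $w\in\mathbb{R}^n$ with $\max_i|w_i|\le a$ and $\Vert w\Vert_2\le b$,
\[
\mathbb{P}\big(|\overline{Z}^Tw|\ge u_0\big)\le 2\exp(-\beta n m),\qquad u_0:=\frac{L(2+\beta)\,n\,m\,a}{\ell}.
\]

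For this one-dimensional bound, for $\theta>0$ I would estimate the moment generating function termwise, using $1+x\le e^x$ and the elementary inequality $e^x-1-x\le\tfrac12 x^2 e^{\max(x,0)}$ (valid for all real $x$):
\[
\mathbb{E}\big[e^{\theta w_i\overline{Z}_i}\big]
= e^{-\theta w_i\mathbb{E}Z_i}\big(p_i e^{\theta w_i}+q_i e^{-\theta y w_i}+1-p_i-q_i\big)
\le \exp\Big(\tfrac{\theta^2 w_i^2}{2}(p_i+y^2 q_i)e^{\theta L a}\Big)
\le \exp\Big(\tfrac{\theta^2 w_i^2}{2}L^2 m\,e^{\theta L a}\Big),
\]
using $\mathbb{E}Z_i=p_i-yq_i$ to cancel the linear terms, $|w_i|\le a$, and $p_i+y^2q_i\le L^2(p_i+q_i)\le L^2 m$. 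Multiplying over $i$ (with $\sum_i w_i^2\le b^2$) and applying Markov's inequality yields
\[
\mathbb{P}\big(\overline{Z}^Tw\ge u_0\big)\le\exp\Big(-\theta u_0+\tfrac{\theta^2 L^2 m b^2}{2}e^{\theta L a}\Big),
\]
and the same bound holds for $-\overline{Z}^Tw$.

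The key step is the choice $\theta=\ell/(La)$, which is precisely what produces the logarithmic factor in the denominator of the threshold. With it, $\theta L a=\ell$, $\theta u_0=(2+\beta)nm$, and $\theta^2 L^2 b^2=\ell^2 b^2/a^2$. Since $\ell\le 1+\log(\sqrt n\,a/b)$, one has $\sqrt n\,a/b\ge e^{\ell-1}$, hence $b^2/a^2\le n e^{2-2\ell}$, so
\[
\tfrac{\theta^2 L^2 m b^2}{2}e^{\theta L a}\le\tfrac{m}{2}\ell^2 e^{2-2\ell}e^{\ell}n=\tfrac{e^2}{2}\ell^2 e^{-\ell}nm\le 2nm,
\]
because $\sup_{\ell\ge 1}\ell^2 e^{-\ell}=4e^{-2}$. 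Thus the exponent above is at most $-(2+\beta)nm+2nm=-\beta nm$, giving $\mathbb{P}(\overline{Z}^Tw\ge u_0)\le e^{-\beta nm}$ and, by symmetry, $\mathbb{P}(|\overline{Z}^Tw|\ge u_0)\le 2e^{-\beta nm}$. A final union bound over the $r$ columns finishes the proof.

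I expect the only real subtlety to be getting the shape of the threshold right: a textbook Bernstein/Bennett bound does not directly give the stated form, and one has to operate in the multiplicative-deviation (``Poisson'') regime in which $e^{\theta L a}$ is of constant order — which dictates $\theta$ of order $\ell/(La)$ — while checking that the constant $2$ inside $(2+\beta)$ is large enough to absorb the variance contribution $\tfrac{e^2}{2}\ell^2 e^{-\ell}nm\le 2nm$ uniformly over $\beta\ge 0$ and over the aspect ratio $\sqrt n\,a/b$. The reduction to a single column and the moment generating function estimate itself are routine.
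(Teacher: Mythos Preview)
Your argument is correct. It differs from the paper's proof in two respects, both of which make your version more self-contained.

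First, your column reduction is simpler. You observe that $\|\overline{Z}^T W\|_2\ge r u_0\ge \sqrt{r}\,u_0$ forces $|\overline{Z}^T w_j|\ge u_0$ for some column $w_j$, and then bound each column using the \emph{matrix-level} quantities $a=\|W\|_{2\to\infty}$ and $b=\|W\|_F$ directly (valid since $\|w_j\|_\infty\le a$ and $\|w_j\|_2\le b$). The paper instead proves the comparison inequality
\[
\frac{r\|W\|_{2\to\infty}}{1\lor\log\big(\tfrac{\sqrt n\,\|W\|_{2\to\infty}}{\|W\|_F}\big)}\ \ge\ \sum_{j=1}^r \frac{\|w_j\|_\infty}{1\lor\log\big(\tfrac{\sqrt n\,\|w_j\|_\infty}{\|w_j\|_2}\big)}
\]
via a monotonicity argument for $x\mapsto x/(1\lor\log(cx))$, passes through $\|\cdot\|_2\le\|\cdot\|_1$, and then applies a one-column bound with the \emph{column-specific} norms $\|w_j\|_\infty,\|w_j\|_2$. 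Your route bypasses this inequality entirely at the cost of a harmless extra factor of $\sqrt r$ slack, which the statement already accommodates.

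Second, you prove the one-dimensional tail bound from scratch via a Chernoff argument with the choice $\theta=\ell/(La)$ and the explicit check $\tfrac{e^2}{2}\sup_{\ell\ge 1}\ell^2 e^{-\ell}=2$, whereas the paper imports this as \cite[Lemma~5.2]{Dhara2022a}. Your computation recovers exactly that lemma, so the two proofs agree on the one-column step; yours just unpacks it.
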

\begin{proof} 
Let $w_j = W_{\cdot j}$ denote the $j$th column of $W$, for $j \in [r]$. We will show that
\begin{equation}
\frac{r\Vert W\Vert_{2 \to \infty}}{1 \lor \log \big(\frac{\sqrt{n} \Vert W \Vert_{2 \to \infty}}{\Vert W \Vert_F} \big)} \geq  \sum_{j=1}^r \frac{\| w_j\|_{\infty}}{1 \lor \log \big(\frac{\sqrt{n} \| w_j \|_{\infty}}{\| w_j \|_2} \big)} \label{eq:w-claim}.
\end{equation}
Given \eqref{eq:w-claim}, we then obtain
\begin{align}
&\mathbb{P}\bigg(\big\Vert \overline{Z}^T W\big\Vert_2 \geq  r\frac{\max\{1,y\} (2+\beta) n}{1 \lor \log \big(\frac{\sqrt{n} \Vert W \Vert_{2 \to \infty}}{\Vert W \Vert_F} \big)} \Vert W \Vert_{2 \to \infty} \max_i\{p_i + q_i\}\bigg) \nonumber\\
&\leq \mathbb{P}\bigg(\sum_{j=1}^r \bigg| \sum_{i=1}^n W_{ij} \overline{Z}_i \bigg| \geq  \max\{1,y\} (2+\beta)n  \cdot \max_i \{p_i + q_i\} \sum_{j=1}^r \frac{\Vert w_j\Vert_{\infty}}{1 \lor \log \big(\frac{\sqrt{n} \| w_j \|_{\infty}}{\| w_j \|_2} \big)}\bigg) \label{eq:triangle-inequality}\\
&\leq \sum_{i=1}^r \mathbb{P}\bigg(\bigg| \sum_{i=1}^n W_{ij} \overline{Z}_i \bigg| \geq   \frac{\max\{1,y\} (2+\beta)n \Vert w_j\Vert_{\infty}}{1 \lor \log \big(\frac{\sqrt{n} \Vert w_j \Vert_{\infty}}{\| w_j \|_2} \big)} \max_i \{p_i + q_i\}\bigg) \label{eq:union-bound}\\
&\leq 2r \exp\big(-\beta n \max_i\{p_i + q_i\} \big) \label{eq:r=1-case}.
\end{align}
Here \eqref{eq:triangle-inequality} follows from \eqref{eq:w-claim} and the fact that $\|x\|_2\leq \|x\|_1$ for any finite dimensional vector.
Next,~\eqref{eq:union-bound} follows by the union bound, and \eqref{eq:r=1-case} is an application of \cite[Lemma 5.2]{Dhara2022a}.

It remains to prove \eqref{eq:w-claim}. Since $\Vert w_j \Vert_2 \leq \Vert W \Vert_F$ for all $j \in [r]$, we obtain
\begin{align*}
\sum_{j=1}^r \frac{\| w_j\|_{\infty}}{1 \lor \log \big(\frac{\sqrt{n} \| w_j \|_{\infty}}{\| w_j \|_2} \big)} &\leq\sum_{j=1}^r \frac{\Vert w_j\Vert_{\infty}}{1 \lor \log \left(\frac{\sqrt{n} \Vert w_j \Vert_{\infty}}{\Vert W \Vert_F} \right)}.
\end{align*}
Let $g(c,x) := \frac{x}{1 \vee \log (c x) }$ for $c>0$. Then $\frac{\partial}{\partial x}g(c,x) = 1$ for $x< \e/c$, and $\frac{\partial}{\partial x}g(c,x) = \frac{\log (cx) -1 }{\log^2 (cx)}>0$ for $x> \e/c$.
Therefore, $g(c,\cdot)$ is increasing for any $c>0$. Since $\Vert w_j \Vert_{\infty} \leq \Vert W \Vert_{2 \to \infty}$ for all $j$, we obtain
\begin{align*}
&\sum_{j=1}^r \frac{\| w_j\|_{\infty}}{1 \lor \log \big(\frac{\sqrt{n} \| w_j \|_{\infty}}{\Vert W \Vert_F} \big)}  = \sum_{j=1}^r g\bigg(\frac{\sqrt{n}}{\|W\|_{F}}, \| w_j \|_{\infty}\bigg)\\
&\leq r g\bigg(\frac{\sqrt{n}}{\|W\|_{F}}, \Vert W \Vert_{2 \to \infty}\bigg) = \frac{r \Vert W\Vert_{2 \to \infty}}{1 \lor \log \left(\frac{\sqrt{n} \Vert W \Vert_{2 \to \infty}}{\Vert W \Vert_F} \right)},
\end{align*}
which completes the proof of \eqref{eq:w-claim}.
\end{proof}

\begin{proof}[Proof of Theorem \ref{theorem:entrywise-bound-multiple}]
We now verify the conditions of Theorem~\ref{theorem:abbe-entrywise} for the signed adjacency matrix $A = A(G,y)$ when $G\sim \CSBM_n^k(\rho,P,t)$.
Set 
\[\varphi(x) = r \frac{2 \log(n)}{\Delta^{\star} } \max\{1,y\}(t + 2) \bigg(1 \lor \log \Big(\frac{1}{x}\Big) \bigg)^{-1}.\]
Note that $\lim_{x \to 0^+}\varphi(0) = 0$ and $\frac{\varphi(x)}{x}$ is non-increasing on $\mathbb{R}_+$. Thus the first condition holds. 

To verify the second condition, we find that $\Vert A^{\star} \Vert_{2 \to \infty} = \Theta \big(\frac{ \log n}{\sqrt{n}} \big)$. 
Applying Lemma \ref{lemma:entrywise-3} with $c = 3$, and using the fact that $|A^{\star}_{ij}| \leq \frac{t \log(n)}{n}\max_{i,j\in [k]}P_{ij}$, there exists $c' > 0$ such that
\begin{equation}
\mathbb{P}\left(\Vert A - \mathbb{E}[A] \Vert_2 \leq c' \sqrt{\log(n)} \right) \geq 1 - n^{-3}. \label{eq:third-condition}   
\end{equation}
{By \eqref{delta-star-defn},} we have $\Delta^{\star} > 0$. Moreover, by Lemma \ref{lemma:spectral-gap}, we have $\Delta^{\star} = \Theta(\log(n))$.
Let $\gamma = c' \sqrt{\log(n) }/\Delta^{\star}$. Therefore, $\Vert A^{\star} \Vert_{2 \to \infty} \leq \gamma  \Delta^{\star}$ is satisfied for $n$ large enough. 

The third condition is immediate. 

The second part of the fourth condition holds with $\delta_0 = n^{-3}$ due to \eqref{eq:third-condition}. To verify the first part, note that $\kappa = \Theta(1)$ by Lemma \ref{lemma:spectral-gap} and $\gamma = o(1)$. 
Then $32 \kappa \max\{\gamma, \varphi(\gamma)\} \leq 1$ for all sufficiently large $n$.

To verify the fifth condition, fix $W \in \mathbb{R}^{n \times r}$ and $m \in [n]$. By Lemma \ref{lemma:entrywise-4-multiple-eigenvectors} with $p_i \in \{\frac{t\log n}{n} P_{ab}: a,b\in [k]\}$, $p_i+q_i = \frac{t\log n}{n}$
and $\beta = \frac{4}{t}$, we obtain 
\begin{align*}
&\mathbb{P}\bigg(  \big\|\big((A - A^{\star})_{m, \cdot}\big) \cdot W  \big\|_2  \geq r \frac{\max\{1,y\}(2+ 4/t)n}{1 \lor \log\left(\frac{\sqrt{n} \Vert W \Vert_{2 \to \infty}}{\Vert W \Vert_F} \right)} \Vert W \Vert_{2 \to \infty} \frac{t\log n}{n}  \bigg) \leq 2 r\exp\left(-\frac{4 n}{t} \times \frac{t\log n}{n}\right),    
\end{align*}
which can be re-written as
\begin{align*}
&\mathbb{P}\Big( \big\|\big((A - A^{\star})_{m, \cdot}\big) W  \big\|_2 \geq \Delta^{\star} \Vert W \Vert_{2 \to \infty} \varphi\left(\frac{\Vert W \Vert_F}{\sqrt{n} \Vert W \Vert_{2 \to \infty}} \right)  \Big) \leq 2r n^{-4}.    
\end{align*}
Therefore, the fifth condition is satisfied with $\delta_1 = 2r n^{-3}$. 

Applying Theorem~\ref{theorem:abbe-entrywise}, we conclude that with probability at least $1-(1 + 4r)n^{-3} \geq 1 - 5rn^{-3}$,
\begin{align*}
\inf_{O \in \mathcal{O}^{r \times r}} \left \Vert U O - A U^{\star} \left(\Lambda^{\star} \right)^{-1} \right\Vert_{2 \to \infty} &\leq C_0 \Big(\kappa (\kappa + \varphi(1)) (\gamma + \varphi(\gamma)) \Vert u_k^{\star} \Vert_{\infty} + \frac{\gamma}{\Delta^{\star}} \Vert A^{\star} \Vert_{2 \to \infty} \Big)\\
&\leq \frac{C(P,\rho,t,y)}{\log \log (n) \sqrt{n}}.
\end{align*}
\end{proof}
\section{Performance of Spectral Algorithms}\label{sec:spectral}
Throughout this section, we use the notation
\begin{align}\label{defn:y-value}
    y(p,q) = \frac{\log \frac{1-q}{1-p}}{ \log \frac{p}{q}} \quad \text{for}~ p\neq q.
\end{align}
This will be the choice of $y$ value for which \specone~algorithms are optimal in the cases stated in Theorem~\ref{thm:one-matrix-two-vec}~\ref{thm:one-matrix-two-vec-1}. Also, as before, we condition on a fixed value of $\sigma_0$ satisfying \eqref{eq:concentration} with $\varepsilon = n^{-1/3}$.

Recall that our spectral algorithms use the top two eigenvectors of the signed adjacency matrix/matrices. In general, the signed adjacency matrix should have two main eigenvectors which correspond (up to a potential sign flip) to the main eigenvectors of the expected adjacency matrix. However, this could run into complications if both eigenvalues are the same or one of the eigenvalues is $0$. In order to address this, we have the following eigenvalue characterization. The proof is provided in Appendix~\ref{sec:eigenvalue-properties}.
\begin{lemma} \label{edgeEigenvalue}
Let $0<p_1,p_2,q<1$ be not all the same, $\rho\in (0,1)$ and define
\begin{eq}\label{defn:2-2-matrix}
    &A' = A'(y)\\
    &:=\begin{pmatrix} p_1-y(1-p_1) & q-y(1-q) \\q-y(1-q) & p_2-y(1-p_2)\end{pmatrix} \begin{pmatrix} \rho & 0 \\0 & 1-\rho\end{pmatrix}
\end{eq}
for each $y>0$.
Then all of the following hold.\begin{enumerate}
    \item For any fixed $p_1,p_2,q,\rho\in (0,1)$,
    there exists a set $\cY$ with $|\cY| \leq 3$ such that the eigenvalues of $A'$ are distinct and nonzero for all 
    $y\notin \cY$. \label{edgeEigenvalue-1}
    
    \item If $p_1 = p$, $p_2 = q$, $p\neq q$, and $y=y(p,q)$ then the eigenvalues of $A'$ are distinct and nonzero. \label{edgeEigenvalue-2}    
    \item If $p_1=p_2 = p$, $p\neq q$, and $y=y(p,q)$ then the eigenvalues of $A'$ are distinct and nonzero if and only if $p+q\ne 1$. 
    \label{edgeEigenvalue-3}
\end{enumerate}
\end{lemma}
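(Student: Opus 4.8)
The plan is to reduce the whole lemma to controlling the zero sets of two explicit one-variable functions. Write $a(y)=p_1-y(1-p_1)$, $b(y)=q-y(1-q)$, $c(y)=p_2-y(1-p_2)$, so that $A'(y)=M(y)\,\mathrm{diag}(\rho,1-\rho)$, where $M(y)$ is the symmetric $2\times 2$ matrix with diagonal entries $a(y),c(y)$ and off-diagonal entry $b(y)$. Since $\mathrm{diag}(\rho,1-\rho)$ is positive definite, $A'(y)$ is similar to the symmetric matrix $\mathrm{diag}(\rho,1-\rho)^{1/2}M(y)\,\mathrm{diag}(\rho,1-\rho)^{1/2}$ and hence has real eigenvalues; these are distinct iff the discriminant $\Phi(y):=(\operatorname{tr}A'(y))^2-4\det A'(y)$ is nonzero, and both nonzero iff $\det A'(y)\ne 0$. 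So it suffices to bound $|\{y>0:\Phi(y)=0\}|$ and $|\{y>0:\det A'(y)=0\}|$.

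For Part (1): $\operatorname{tr}A'(y)=\rho a(y)+(1-\rho)c(y)$ is affine in $y$, and expanding shows $\Phi(y)$ is a polynomial of degree exactly two with leading coefficient $(\rho(1-p_1)-(1-\rho)(1-p_2))^2+4\rho(1-\rho)(1-q)^2>0$, so $\Phi$ has at most two real zeros. For the determinant I would substitute $u=y/(1+y)$, an increasing bijection of $(0,\infty)$ onto $(0,1)$, which gives $a(y)c(y)-b(y)^2=(1+y)^2\big(p_1p_2-q^2-u(p_1+p_2-2q)\big)$, a positive multiple of an affine function of $u$; if $p_1+p_2\ne 2q$ this has a unique zero, so $\det A'$ vanishes for at most one $y>0$, while if $p_1+p_2=2q$, AM--GM together with the hypothesis that $p_1,p_2,q$ are not all equal forces $p_1p_2<q^2$ and $\det A'$ never vanishes. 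Then $\mathcal Y:=\{y>0:\Phi(y)=0\}\cup\{y>0:\det A'(y)=0\}$ satisfies $|\mathcal Y|\le 2+1=3$.

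Parts (2) and (3) fix $y=y(p,q)>0$, which by definition satisfies $y\log(p/q)=\log\frac{1-q}{1-p}$. A shared ingredient I would establish first is that $b(y(p,q))=q-y(p,q)(1-q)\ne 0$ for $p\ne q$: its vanishing is equivalent to $g(p)=0$ where $g(p):=(1-q)\log\frac{1-q}{1-p}-q\log\frac pq$, and since $g(q)=0$ with $g'(p)=\frac{p-q}{p(1-p)}$, the function $g$ attains its strict minimum $0$ at $p=q$. Granting this: in Part (2) ($p_1=p$, $p_2=q$, so $c=b$) one has $\det A'(y)=\rho(1-\rho)\,b(y)(a(y)-b(y))$ with $a(y)-b(y)=(p-q)(1+y)\ne0$, and $\Phi(y)=(\rho a(y)-(1-\rho)b(y))^2+4\rho(1-\rho)b(y)^2>0$, so both eigenvalues are nonzero and distinct. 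In Part (3) ($p_1=p_2=p$, so $c=a$) one has $\operatorname{tr}A'(y)=a(y)$, $\Phi(y)=a(y)^2(1-2\rho)^2+4\rho(1-\rho)b(y)^2>0$ (eigenvalues always distinct), and $\det A'(y)=\rho(1-\rho)(a(y)-b(y))(a(y)+b(y))$ with $a(y)-b(y)\ne0$, so the eigenvalues are nonzero iff $a(y)+b(y)\ne0$.

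The remaining point, which I expect to be the main obstacle since it is invisible from polynomial degrees, is that $a(y(p,q))+b(y(p,q))\ne0\iff p+q\ne1$. Since $a(y)+b(y)=(p+q)-y\big((1-p)+(1-q)\big)$, vanishing is equivalent to $(2-p-q)\log\frac{1-q}{1-p}=(p+q)\log\frac pq$; with $s=p+q$, $t=|p-q|>0$ (note $y(p,q)$ is symmetric in $p,q$) and $s,2-s\in(t,2-t)$ this reads $\psi(2-s)=\psi(s)$ for $\psi(x):=x\log\frac{x+t}{x-t}$. Expanding $\log\frac{1+r}{1-r}$ and $\frac{2r}{1-r^2}$ at $r=t/x\in(0,1)$ gives $\psi'(x)=\log\frac{x+t}{x-t}-\frac{2xt}{x^2-t^2}=-2\sum_{k\ge1}\big(1-\tfrac1{2k+1}\big)r^{2k+1}<0$, so $\psi$ is strictly decreasing on $(t,\infty)$ and $\psi(2-s)=\psi(s)$ forces $s=1$; conversely $p+q=1$ gives $1-q=p$, $y(p,q)=1$, hence $a(y)+b(y)=0$. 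This completes Part (3). The monotonicity of $\psi$ (equivalently of $x\mapsto x\operatorname{arctanh}(t/x)$) and the easier monotonicity of $g$ are the only genuinely analytic inputs; everything else is bookkeeping with the quadratic characteristic polynomial.
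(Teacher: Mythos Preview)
Your proof is correct and follows essentially the same strategy as the paper: bound the zero sets of the discriminant and determinant for Part~(1), then for Parts~(2)--(3) show $b(y(p,q))\neq 0$ and analyze when $\det A'(y(p,q))=0$ via a series/monotonicity argument. The details differ in a few places worth noting. In Part~(1) you split the count as $2+1$ (two zeros of $\Phi$, one of $\det A'$), whereas the paper splits it as $1+2$ (observing that $\Phi(y)=(\rho a-(1-\rho)c)^2+4\rho(1-\rho)b^2$ vanishes only when $b=0$, i.e.\ at $y=q/(1-q)$, while $\det A'$ is quadratic); your $u=y/(1+y)$ substitution linearizing $ac-b^2$ is a cleaner way to get a single determinant zero than the paper's argument, which has to separately rule out $\det A'(y)$ being the zero polynomial. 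For Part~(3), your monotonicity of $\psi(x)=x\log\frac{x+t}{x-t}$ via the series for $\psi'$ is the same computation the paper does, just repackaged: the paper writes $y(p,q)$ directly as $\frac{x}{1-x}$ times a ratio of the same power series (with $x=(p+q)/2$) and reads off the comparison with $(p+q)/(2-p-q)$. Both routes reduce to the identity $\log\frac{1+r}{1-r}-\frac{2r}{1-r^2}<0$ for $r\in(0,1)$.
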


\subsection{One Matrix}
In order to prove Theorem \ref{thm:one-matrix-two-vec}~\ref{thm:one-matrix-two-vec-1}, we provide an algorithm which is an instance of \specone, that will succeed up to the information theoretic threshold when 
\begin{eq} \label{spec-one-success-regime}
    &\text{either }p_1 = p_2 = p , p\neq q \text{ and }p+q \neq 1 \\ &\text{or} \quad p_1= p \text{ and } p_2 = q \neq p.
\end{eq}
{To design the algorithm, we crucialy use the entrywise eigenvector bounds. Remark \ref{remark:single-eigenvector} tells us that for any pair of constants $a_1, a_2$, we have
\begin{equation}
a_1 u_1 + a_2 u_2 \approx A\left(\frac{a_1 u_1^{\star}}{\lambda_1^{\star}} + \frac{a_2 u_2^{\star}}{\lambda_2^{\star}}\right), \label{eq:algorithm-intuition}   
\end{equation}
where the approximation is in $\ell_{\infty}$, and we have ignored the sign ambiguity in $u_1$ and $u_2$ for clarity of exposition. Observe that the vector $a_1\frac{ u_1^{\star}}{\lambda_1^{\star}} + a_2\frac{ u_2^{\star}}{\lambda_2^{\star}}$ is a block vector; that is, it is of the form
\begin{equation}\label{eq:system}
\begin{cases}
\sqrt{n} \log (n)  \left(a_1\frac{ u_1^{\star}}{\lambda_1^{\star}} + a_2\frac{ u_2^{\star}}{\lambda_2^{\star}}\right)_i =  \alpha_1 & \sigma_0(i) = 1\\
\sqrt{n} \log (n) \left(a_1\frac{ u_1^{\star}}{\lambda_1^{\star}} + a_2\frac{ u_2^{\star}}{\lambda_2^{\star}}\right)_i = \alpha_2 & \sigma_0(i) = 2,\\
\end{cases}
\end{equation}
where $\alpha_1, \alpha_2$ depend on $a_1, a_2$. We see that the $v^{\text{th}}$ entry of \eqref{eq:algorithm-intuition} is equal to 
\begin{eq}\label{eq:alg-one-vertex}
&A_{v,\cdot} \cdot \left(\frac{a_1 u_1^{\star}}{\lambda_1^{\star}} + \frac{a_2 u_2^{\star}}{\lambda_2^{\star}}\right) \\
&= \alpha_1 d_{+1}(v)  - y\alpha_1 d_{-1}(v) + \alpha_2 d_{+2}(v)  - y\alpha_2 d_{-2}(v).    
\end{eq}
Since we will ultimately threshold \eqref{eq:algorithm-intuition} at $0$, we set $\alpha_1, \alpha_2$ so that \eqref{eq:alg-one-vertex} is proportional to $w^{\star T} d(v)$, where $w^{\star}$ is defined in \eqref{defn:genie-vec}. In this way, the spectral algorithm would give rise to the optimal hyperplane for separating the two communities.
It remains to find $a_1, a_2$ to satisfy~\eqref{eq:system} with the desired values of $\alpha_1, \alpha_2$. Since we do not have access to $(\lambda_1^{\star}, u_1^{\star}), (\lambda_2^{\star}, u_2^{\star})$, we form an auxiliary matrix $B$, which is essentially equivalent to $\mathbb{E}[A \mid \sigma_0]$, up to a permutation of the rows and columns. Letting $(\gamma_1, v_1), (\gamma_2, v_2)$ be the eigenpairs of $B$, we solve \eqref{eq:system} with $(\gamma_1, v_1), (\gamma_2, v_2)$ in place of $(\lambda_1^{\star}, u_1^{\star}), (\lambda_2^{\star}, u_2^{\star})$, thereby obtaining $a_1, a_2$. This strategy of solving for the weights is captured in Algorithm \ref{alg:weights-1}, with the classification algorithm given in Algorithm \ref{alg:one-matrix} below.
}

\begin{breakablealgorithm}
\caption{One-matrix community detection}\label{alg:one-matrix}
\begin{algorithmic}[1]
\Require{Parameters $t>0$, $\rho\in (0,1)$, $p_1,p_2, q\in (0,1)$ satisfying \eqref{spec-one-success-regime} and  $G \sim \CSBM_n^2(\bar{\rho}, P, t)$.
}
\Ensure{Community classification $\hsig \in \{1,2\}^n$.} 
\vspace{0.2cm}
    \State Construct an $n \times n$ matrix $A = A(G,y)$ as defined in Definition~\ref{def:signed-adj}.
    \State Find the top two eigenpairs $(\lambda_1, u_1) \text{ and }  (\lambda_2, u_2)$ of $A$. 
    \State \label{step:compute-weights}Compute $(a_1, a_2)$, the weights produced by Algorithm \ref{alg:weights-1}.
    \State Let $U = \{ s_1 a_1 u_1 + s_2 a_2 u_2 : s_1, s_2 \in \{\pm1\}\}$. For each $u \in U$, let $\hat{\sigma}( \cdot ;u) = 1+ (1+\sgn(u))/2$.
    \State Return $\hsig = \argmax_{u\in U} \PR(G \mid \hsig(\cdot;u))$.
\end{algorithmic}
\end{breakablealgorithm}

\begin{breakablealgorithm}
\caption{Find weights (one matrix)}\label{alg:weights-1}
\begin{algorithmic}[1]
\Require{Parameters $t>0$, $\rho\in (0,1)$, $p_1,p_2, q\in (0,1)$ satisfying \eqref{spec-one-success-regime}.
}
\Ensure{Weights $(a_1, a_2)$}
\vspace{0.2cm}
    \State Let $\cV_1:= \{i:i\leq \rho n\}$ and define $B$ to be the symmetric block matrix where $B_{ij}$ is $ \frac{t \log n}{n}[p_1 - y(p_1,q)(1-p_1)]$ if $i,j\in \cV_1$, $\frac{t \log n}{n}[p_2 - y(p_2,q)(1-p_2)]$ if $i,j\notin \cV_1$, and $\frac{t\log n}{n}[q - y(p_1,q)(1-q)]$ if $i\in \cV_1,j\notin \cV_1$ or $i\notin \cV_1,j\in \cV_1$. 
    Let the eigenpairs of $B$ be denoted $(\gamma_1, v_1), (\gamma_2, v_2)$.
    \State Set $\alpha_1 = \log \frac{p}{q}$. 
    If $p_1 = p_2 = p$, set $\alpha_2 = -\alpha_1$. Otherwise ($p_2 = q$), set $\alpha_2 = 0$. Let $z$ be a block vector with $z_i = \alpha_1$ if $i \in \cV_1$ and $z_i = \alpha_2$ if $i \not \in \cV_1$.
    \State Return $(a_1, a_2)$ satisfying 
    \begin{eq} \label{constant-choosing-identity}
        \sqrt{n} \log n \left(a_1 \frac{v_1}{\gamma_1} + a_2 \frac{v_2}{\gamma_2}\right) = z.
    \end{eq}
\end{algorithmic}
\end{breakablealgorithm}
It is worthwhile to note that finding weights in Algorithm~\ref{alg:weights-1} does not require any information about~$\true$.

\begin{proof}[Proof of Theorem \ref{thm:one-matrix-two-vec}]
Let $n_i$ be the number of vertices in community $i$ for $i = 1,2$.
Throughout the proof, we will condition on $\sigma_0$ satisfying $\left| n_i - \rho_i n \right| \leq n^{2/3}$. This event has probability $1-o(1)$ as shown earlier in \eqref{eq:concentration}. 

First, suppose that \eqref{spec-one-success-regime} holds. We will first prove Theorem~\ref{thm:one-matrix-two-vec}~\ref{thm:one-matrix-two-vec-1} by showing Algorithm~\ref{alg:one-matrix} succeeds up to the information theoretic limit. 
Let $A = A(G, y)$ with $y = y(p_1,q)$, and define  $A^{\star} = \mathbb{E}[A]$. Let $(\lambda_i,u_i)$ and $(\lambda_i^\star,u_i^\star)$ denote the $i$-th largest eigenpair of $A$ and $A^\star$ respectively. 
We claim that 
\begin{eq}\label{eval-sep-A-star}
    &\lambda_1^\star = (1+o(1)) \nu_1\log n, \quad \lambda_2^\star = (1+o(1)) \nu_2\log n \\
    &\text{with } \nu_1\neq \nu_2, \nu_1,\nu_2\neq 0.
\end{eq}
Indeed, consider the matrix $B$ defined in Step 1 of Algorithm~\ref{alg:weights-1}, whose eigenvalues are $t \log n$ times the corresponding eigenvalues the matrix $A'$ defined in \eqref{defn:2-2-matrix}. Under the conditions of Theorem~\ref{thm:one-matrix-two-vec}~\ref{thm:one-matrix-two-vec-1}, Lemma \ref{edgeEigenvalue} (Parts \ref{edgeEigenvalue-2} and \ref{edgeEigenvalue-3}) shows that the non-zero eigenvalues of $B$ are $\nu_1\log n$ and $\nu_2 \log n$ with $\nu_1\neq \nu_2$. Next, suppose $O$ is the permutation matrix such that, in $OA^\star O^T$, the rows and columns corresponding to vertices in community 1 appear before those in community 2. By Weyl's theorem, the top two eigenvalues of $OA^\star O^T$ are within $1+o(1)$ multiplicative factor of those of $B$. Since $O$ is an orthogonal matrix, \eqref{eval-sep-A-star} follows immediately. 

Using \eqref{eval-sep-A-star}, we can apply Corollary~\ref{corollary:entrywise} and conclude that, with probability $1-O(n^{-3})$,
\begin{align*}
&\left\Vert s_1 u_1 -  \frac{A u_1^\star}{\lambda_1^\star} \right\Vert_{\infty} \leq \frac{C}{\sqrt{n} \log \log n}\\
\text{and}~~~&\left \Vert s_2 u_2 -  \frac{A u_2^\star}{\lambda_2^\star} \right \Vert_{\infty} \leq \frac{C}{\sqrt{n} \log \log n},    
\end{align*}
for some $s_1, s_2 \in \{-1,1\}$ and some constant $C>0$. Consequently, for any $a_1, a_2\in \R$, with probability $1-o(1)$,
\begin{eq}\label{eq:vec-approximation-in-proof}
    &\left \Vert s_1a_1u_1+s_2a_2u_2 - A\left(\frac{ a_1}{\lambda_1^\star} u_1^\star+\frac{ a_2}{\lambda_2^\star} u_2^\star\right) \right \Vert_{\infty} \\
    &\leq \frac{C(|a_1| + |a_2|)}{\sqrt{n} \log \log n}.
\end{eq}
In Step \ref{step:compute-weights} of Algorithm \ref{alg:one-matrix}, we
pick $(a_1,a_2)$ according to Algorithm~\ref{alg:weights-1}. 
Let $\cV_1':=\{i:i\leq n_1(\true)\}$ and define $B',v_1',v_2'$ similarly as $B,v_1,v_2$ in Algorithm~\ref{alg:weights-1} by replacing $\cV_1$ by $\cV_1'$. 
For $l=1,2$, note that $v_l$ takes some value $\frac{\zeta_{l1}}{\sqrt{n}}$ on $\cV_1$ and $\frac{\zeta_{l2}}{\sqrt{n}}$ on $\cV_1^c$ for constants $\zeta_{l1},\zeta_{l2}$. 
Using identical steps as \cite[Lemma 5.3]{Dhara2022a}, we can argue that $v_l'$ also takes value $(1+o(1))\frac{\zeta_{l1}}{\sqrt{n}}$ on $\cV_1'$ and $(1+o(1))\frac{\zeta_{l2}}{\sqrt{n}}$ on~$(\cV_1')^c$. Therefore, 
\begin{align*}
    \sqrt{n} \log n \left(a_1 \frac{v_1'}{\gamma_1} + a_2 \frac{v_2'}{\gamma_2}\right) = \tilde{z},
\end{align*}
where $\tilde{z}$ is a block vector taking values $(1+o(1))\alpha_1$ on $\cV_1'$ and $(1+o(1)) \alpha_2$ on~$(\cV_1')^c$.
Next, note that the matrix $A^\star$ can be obtained from $B'$ by jointly permuting the row and column labels and then setting the diagonal entries to be zero. 
Also, noting that $\lambda_l^\star = (1+o(1)) \gamma_l$, we can ensure that 
\begin{eq}\label{form-lin-comb}
    \sqrt{n} \log(n) \left(a_1 \frac{u_1^{\star}}{\lambda_1^{\star}} + a_2 \frac{u_2^{\star}}{\lambda_2^{\star}}\right) = z^{\star},
\end{eq}
where $z^{\star}$ is a block vector taking value $(1+o(1))\alpha_1$ on $V_1:=\{v:\true(v) = +1\}$ and $(1+o(1)) \alpha_2$ on~$V_2:=\{v:\true(v) = -1\}$.
Let 
\[\tau = A \left(a_1 \frac{u_1^{\star}}{\lambda_1^{\star}} + a_2 \frac{u_2^{\star}}{\lambda_2^{\star}}  \right).\]
 Then, $\sqrt{n} \log(n) \tau =(1+o(1)) A z^{\star}$. 
By \eqref{form-lin-comb}, with probability $1-o(1)$, for each $v \in [n]$, 
\begin{eq}\label{eq:tau}
\sqrt{n} \log(n) \tau_v &= \alpha_1 d_{+1}(v) - y \alpha_1 d_{-1}(v) \\
& \quad +  \alpha_2 d_{+2}(v)-   y \alpha_2 d_{-2}(v) + o(\log n), 
\end{eq}
where $(d_{+1}(v),d_{-1}(v),d_{+2}(v),d_{-2}(v))$ denotes the degree profile of $v$.
Also, in this case, note that $w^\star$ in \eqref{defn:genie-vec} simplifies to
\begin{align} \label{w-star-two-com}
    w^\star = \bigg(\log \frac{p_1}{q}, \log \frac{1-p_1}{1-q}, \log \frac{q}{p_2}, \log \frac{1-q}{1-p_2}\bigg).
\end{align}
In order to apply Proposition \ref{prop:seperation-verification}, we need to ensure that the coefficients of \eqref{eq:tau} coincide with $w^{\star}$ up to a scalar factor. There are two cases to consider. First, suppose $p_1 = p_2 = p$, and $p \neq q$ (where we rule out the case $\{p+q = 1, \rho \neq 1/2\}$). Recalling that $y = y(p,q)$, we obtain \begin{align} \label{sym-w-star}
    w^\star =  (1,-y,-1,y)\log \left(\frac{p}{q}\right). 
\end{align}
Comparing \eqref{eq:tau} and \eqref{sym-w-star}, we see that the choice $\alpha_1 = \log\frac{p}{q}$ and $\alpha_2 = -\alpha_1$ equates the coefficients of the leading terms of \eqref{eq:tau} with the entries of \eqref{sym-w-star}. These are the values of $(\alpha_1, \alpha_2)$  chosen in Algorithm~\ref{alg:weights-1}~Step~2. {(Note that any choice of the form $(\alpha_1, \alpha_2) = c(1,-1)$ would lead to $\sqrt{n} \log(n) \tau_v - o(\log(n)) \propto \langle w^{\star},d(v)\rangle$.)}

Next, suppose $p_2 = q$ and recall that $p_1 \neq q$. By our choice of $y = y(p_1,q)$, we have that 
\begin{align*}
    w^\star =  (1,-y,0,0)\log \left(\frac{p}{q}\right). 
\end{align*} 
In this case, we need $\alpha_1 = \log \frac{p_1}{q} $ and $\alpha_2 = 0$, which is also the case by our choice in Algorithm~\ref{alg:weights-1}~Step~2. 

Thus, in both cases, our choices of $(\alpha_1, \alpha_2)$ yield that, with probability $1-o(1)$, $\sqrt{n} \log(n) \tau_v = \langle w^{\star}, d(v)\rangle + o(\log n)$ for each $v \in [n]$. By Proposition \ref{prop:seperation-verification}, we conclude that for some $\varepsilon > 0$, 
\begin{align*}
&\sqrt{n} \log(n) \min_{v \in V_1} \tau_v \geq \frac{1}{2} \varepsilon \log n \\
\text{and}\quad &\sqrt{n} \log(n) \max_{v \in V_2} \tau_v \leq  -\frac{1}{2}\varepsilon \log n
\end{align*}
with probability $1-o(1)$, and consequently
\begin{align*}
 \min_{v \in V_1} \tau_v \geq \frac{\varepsilon}{2\sqrt{n}} \quad\text{and}\quad   \max_{v \in V_2} \tau_v \leq  -\frac{\varepsilon}{2\sqrt{n}}.
\end{align*}
Finally, since $\frac{C}{\sqrt{n} \log \log (n)} = o \big(\frac{1}{\sqrt{n}} \big)$, we conclude with probability $1-o(1)$,
\begin{align*}
&\min_{v \in V_1}\left(s_1 a_1 u_1 + s_2 a_2 u_2  \right)_v > \frac{\varepsilon}{3\sqrt{n}}\\
\text{and}\quad & \max_{{v \in V_2}}\left(s_1 a_1 u_1 + s_2 a_2 u_2  \right)_v < - \frac{\varepsilon}{3\sqrt{n}}.    
\end{align*}

Therefore, thresholding the vector $s_1 a_1 u_1 + s_2 a_2 u_2$ at zero correctly identifies the communities with high probability. In other words, $\sgn(s_1 a_1 u_1 + s_2 a_2 u_2)$ coincides with the MAP estimator. While $s_1, s_2$ are unknown, the final step of Algorithm \ref{alg:classify} chooses the best one among the four candidate community partitions arising from the four possible sign combinations. By Theorem \ref{theorem:achievability-general}, we know that the MAP estimator is the unique maximizer of the posterior probability. Therefore, the spectral algorithm will identify the correct candidate. This completes the proof of Theorem~\ref{thm:one-matrix-two-vec}~\ref{thm:one-matrix-two-vec-1}.

To prove Theorem~\ref{thm:one-matrix-two-vec}~\ref{thm:one-matrix-two-vec-2}, let $p_1,p_2,q$ be distinct. 
Notice that \eqref{eq:tau} would hold for any $\alpha_1,\alpha_2$ and the corresponding choices of $a_1,a_2$. The particular choice of $\alpha_1,\alpha_2$ was only needed after \eqref{eq:tau} to compare it with $w^\star$. 
By Proposition~\ref{prop:seperation-verification} and \eqref{eq:tau}, in order for \specone~algorithms to be successful, we must have $w^\star = (\alpha_1, -y \alpha_1, \alpha_2, -y \alpha_2)$.
Suppose for the sake of contradiction that $w^\star = (\alpha_1, -y \alpha_1, \alpha_2, -y \alpha_2)$ for some $\alpha_1, \alpha_2$. Since all the entries of $w^{\star}$ are nonzero, we know that $\alpha_1, \alpha_2 \neq 0$. By taking coordinate ratios, we have that 
\begin{eq}\label{y-contradiction}
    &y = \frac{\log \frac{1-q}{1-p_1}}{\log \frac{p_1}{q}} = y(p_1,q)\\
    \text{and} \quad &y = \frac{\log \frac{1-q}{1-p_2}}{\log \frac{p_2}{q}} = y(p_2,q). 
\end{eq}
Now, we claim that for any fixed $q\in (0,1)$, the function $y(p,q)$ is strictly increasing. 
Indeed, 
\begin{eq}\label{y-derivative}
    \frac{\partial}{\partial p} y(p,q) &= \frac{\log \frac{p}{q} \times \frac{1}{1-p} - \log \frac{1-q}{1-p} \times \frac{1}{p} }{\log^2\frac{p}{q}}  \\
    &= \frac{1}{p\log \frac{p}{q}} \bigg(\frac{p}{1-p} - y(p,q)\bigg). 
\end{eq}
Using the fact that $1-\frac{1}{x} < \log x < x - 1$ for any $x>0$, 
\begin{align}
    \text{ for any }p>q: ~ y(p,q) &=  \frac{\log \frac{1-q}{1-p}}{\log \frac{p}{q}} < \frac{\frac{1-q}{1-p}-1}{1 - \frac{q}{p}} = \frac{p}{1-p}\label{eq:y-inequality-1}\\
    \text{ for any }p<q: ~ y(p,q) &=  \frac{\log \frac{1-q}{1-p}}{\log \frac{p}{q}} = \frac{\log \frac{1-p}{1-q}}{\log \frac{q}{p}} \nonumber \\
    &> \frac{1-\frac{1-q}{1-p}}{ \frac{q}{p} - 1} = \frac{p}{1-p}. \label{eq:y-inequality-2}
\end{align}
Therefore, $\frac{\partial}{\partial p} y(p,q) > 0$ for any $p\in (0,1)$, which proves that $y(p,q)$ is strictly increasing. However, $p_1\neq p_2$ and therefore $y(p_1,q) \neq y(p_2,q)$. Thus, \eqref{y-contradiction} leads to a contradiction. In other words,
it is not possible to choose $\alpha_1, \alpha_2$ so that $w^\star = (\alpha_1, -y \alpha_1, \alpha_2, -y \alpha_2)$. The proof then follows by applying Proposition~\ref{prop:seperation-verification}~\ref{prop:seperation-verification-2}.
\end{proof}

\begin{remark}\label{rem:arbitrary-encoding} 
Instead of using the encoding $\{1,-y,0\}$ for present, absent and censored edges, we could have instead used a more general encoding of the form $\{c_1,-yc_2,c_3\}$. In that case, the entrywise approximation would still hold.  One could go though the same steps to show that the decision rule for the spectral algorithm would again be asymptotically based on determining whether some linear expression such as \eqref{eq:tau} is above or below a certain threshold $T$. 
Thus, for $p_1,p_2,q$ which are distinct, an identical argument shows that spectral algorithms with more general encoding also do not succeed sufficiently close to~$t_c$.

\end{remark}

\subsection{Two Matrices} In this section, we will prove Theorem~\ref{thm:two-matrix-two-vec}. Let us start by describing the algorithm that always succeeds up to the information theoretic threshold in the two community case. {The design of the algorithm is analogous to the design of Algorithm \ref{alg:one-matrix}.}
\begin{breakablealgorithm}
\caption{Two-matrix community detection for two communities}\label{alg:two-matrix-a}
\begin{algorithmic}[1]
\Require{Parameters $t>0$, $\rho, p_1, p_2, q\in (0,1)$ such that $|\{p_1, p_2, q\}|\geq 2$, and $G \sim \CSBM_n^2(\bar{\rho}, P, t)$.}
\Ensure{Community classification $\hsig \in \{1,2\}^n$.} 
\vspace{0.2cm}
    \State Fix $y,\tilde{y}\notin \cY$ where $\cY$ is given by Lemma~\ref{edgeEigenvalue}~Part~\eqref{edgeEigenvalue-1}. Construct two $n \times n$ matrices $A = A(G,y)$, $\tilde{A} = A(G,\tilde{y})$ as defined in Definition~\ref{def:signed-adj}.
    
    \State Find the top  two eigenpairs of $A$ and $\tilde{A}$ {and respectively denote them} $((\lambda_l, u_l))_{l=1,2}$ and $((\tilde{\lambda}_l, \tilde{u}_l))_{l=1,2}$.
    \State Use Algorithm \ref{alg:weights-2} on input $\left(t,\rho, \begin{pmatrix}p_1 & q \\ q & p_2\end{pmatrix}, y, \tilde{y}\right)$ to compute the weights  $(c_1, c_2, \tilde{c}_1, \tilde{c}_2)$.
    \State Let $U = \{ s_1 c_1 u_1 + s_2 c_2 u_2 + \tilde{s}_1 \tilde{c}_1 \tilde{u}_1 + \tilde{s}_2 \tilde{c}_2 \tilde{u}_2 : s_1, s_2, \tilde{s}_1, \tilde{s}_2 \in \{\pm1\}\}.$ For each $u \in U$, let $\hat{\sigma}( \cdot ;u) = 1+(1+\sgn(u))/2$.
    \State Return $\hsig = \argmax_{u\in U} \PR(G \mid \hsig(\cdot;u))$.
\end{algorithmic}
\end{breakablealgorithm}

\begin{breakablealgorithm}
\caption{Find weights (two matrices, two communities)}\label{alg:weights-2}
\begin{algorithmic}[1]
\Require{Parameters $t>0$, $\rho, p_1, p_2, q\in (0,1)$ such that $|\{p_1, p_2, q\}|\geq 2$, and $y,\tilde{y}\notin \cY$, $y\neq \tilde{y}$ where $\cY$ is given by Lemma~\ref{edgeEigenvalue}~Part~\eqref{edgeEigenvalue-1}.}
\Ensure{Weights $(c_1, c_2, \tilde{c}_1, \tilde{c}_2)$}
\vspace{0.2cm}
 \State Let $\cV_1:= \{i:i\leq \rho n\}$ and define $B$ to be the symmetric block matrix where $B_{ij}$ is $ \frac{t \log n}{n}[p_1 - y(p_1,q)(1-p_1)]$ if $i,j\in \cV_1$, $\frac{t \log n}{n}[p_2 - y(p_2,q)(1-p_2)]$ if $i,j\notin \cV_1$, and $\frac{t\log n}{n}[q - y(p_1,q)(1-q)]$ if $i\in \cV_1,j\notin \cV_1$ or $i\notin \cV_1,j\in \cV_1$. 
Define $\tilde{B}$ similarly by replacing $y$ by $\tilde{y}$. Let the eigenpairs of $B$ and $\tilde{B}$ be $((\gamma_l, v_l))_{l=1,1}$ and $((\tilde{\gamma}_l, \tilde{v}_l))_{l=1,2}$, respectively. 
\item Solve the following system for $\alpha_1, \alpha_2, \tilde{\alpha}_1, \tilde{\alpha}_2$:
\begin{eq}\label{coeffs-2matrix}
    &\alpha_1 + \tilde{\alpha}_1 = \log \frac{p_1}{q}, \quad -y \alpha_1 - \tilde{y} \tilde{\alpha}_1 = \log \frac{1-p_1}{1-q}, \\
    &\alpha_2 + \tilde{\alpha}_2 = \log \frac{q}{p_2} \quad -y \alpha_2 - \tilde{y} \tilde{\alpha}_2 = \log \frac{1-q}{1-p_2}.
\end{eq}
Let $z$ be a block vector with $z_i = \alpha_1$ for $i \in \cV_1$ and $z_i = \alpha_2$ for $i \not \in \cV_1$.
Define $\tilde{z}$ similarly by replacing $(\alpha_1,\alpha_2)$ by $(\tilde{\alpha}_1,\tilde{\alpha}_2)$. 
\State Return $(c_1, c_2, \tilde{c}_1, \tilde{c}_2)$ satisfying 
\begin{eq}\label{two-z-choices}
&\sqrt{n} \log n \left(c_1 \frac{v_1}{\gamma_1} + c_2 \frac{v_2}{\gamma_2}\right) = z \\
\text{and} \quad
&\sqrt{n} \log n \left(\tilde{c}_1 \frac{\tilde{v}_1}{\tilde{\gamma}_1} + \tilde{c}_2 \frac{\tilde{v}_2}{\tilde{\gamma}_2}\right) = \tilde{z}.
\end{eq}
\end{algorithmic}
\end{breakablealgorithm}

\begin{proof}[Proof of Theorem \ref{thm:two-matrix-two-vec}]
As in the proof of Theorem~\ref{thm:one-matrix-two-vec}, we condition on $\sigma_0$ satisfying $\left| n_i - \rho_i n \right| \leq n^{2/3}$. 
Fix  $y,\tilde{y}\notin \cY$, $y\neq \tilde{y}$ where $\cY$ is given by Lemma~\ref{edgeEigenvalue}~Part~\eqref{edgeEigenvalue-1}. Recall all the notation in Algorithms~\ref{alg:two-matrix-a},~\ref{alg:weights-2}. Let $A^\star:= \E[A]$ and $\tilde{A}^\star:= \E[\tilde{A}]$, and let $((\lambda_l^\star, u_l^\star))_{l=1,1}$, and $(\tilde{\lambda}_l^\star, \tilde{u}_l^\star))_{l=1,2}$ be the top eigenpairs of the corresponding matrices. Applying Corollary~\ref{corollary:entrywise}, we have that with probability $1-o(1)$
\begin{align*}
&\Bigg\Vert s_1 c_1 u_1 + s_2 c_2 u_2 + \tilde{s}_1 \tilde{c}_1 \tilde{u}_1 + \tilde{s}_2 \tilde{c}_2 \tilde{u}_2  \\
&\qquad - A \bigg(c_1 \frac{u_1^{\star}}{\lambda_1^{\star}} + c_2 \frac{u_2^{\star}}{\lambda_2^{\star}}  \bigg) -  \tilde{A}\bigg(\tilde{c}_1 \frac{\tilde{u}_1^{\star}}{\tilde{\lambda}_1^{\star}} + \tilde{c}_2 \frac{\tilde{u}_2^{\star}}{\tilde{\lambda}_2^{\star}}    \bigg) \Bigg\Vert_{\infty} \\
&\leq \frac{C\left(|c_1| + |c_2| + |\tilde{c}_1| + |\tilde{c}_2| \right)}{\sqrt{n} \log \log n }, 
\end{align*}
for some $s_1, s_2, \tilde{s}_1, \tilde{s}_2 \in \{\pm1\}$. 
Let \[\tau = A \bigg(c_1 \frac{u_1^{\star}}{\lambda_1^{\star}} + c_2 \frac{u_2^{\star}}{\lambda_2^{\star}}  \bigg) +  \tilde{A}\bigg(\tilde{c}_1 \frac{\tilde{u}_1^{\star}}{\tilde{\lambda}_1^{\star}} + \tilde{c}_2 \frac{\tilde{u}_2^{\star}}{\tilde{\lambda}_2^{\star}}    \bigg).\]
Using \eqref{two-z-choices}, we can repeat the arguments above \eqref{eq:tau}, to show that 
$\sqrt{n} \log(n) \tau = (1+o(1)) z^\star$, where $z_{i}^\star$ is $\alpha_1+\tilde{\alpha}_1$ on $V_1:= \{u:\true(u) = +1\}$ and $\alpha_2+\tilde{\alpha}_2$ on $V_2:= \{u:\true(u) = -1\}$. Consequently, with probability $1-o(1)$, for each $v \in [n]$, 
\begin{align*}
\sqrt{n} \log(n) \tau_v &= d_{+1}(v) \left(\alpha_1 + \tilde{\alpha}_1\right) -  d_{-1}(v) \left(y\alpha_1 + \tilde{y}\tilde{\alpha}_1 \right) \\
&+ d_{+2}(v)  \left(\alpha_2 + \tilde{\alpha}_2 \right) -  d_{-2}(v)  \left(y\alpha_2 + \tilde{y}\tilde{\alpha}_2 \right) \\
&+ o(\log n).
\end{align*}
{The choice of constants in \eqref{coeffs-2matrix} is such that $\sqrt{n} \log(n) \tau_v = \langle w^\star,d(v)\rangle + o(\log n)$, where $w^\star$ is given by~\eqref{defn:genie-vec}. Thus,} by Proposition \ref{prop:seperation-verification} {Part 1}, there exists some $\varepsilon > 0$ such that with probability $1 - o(1)$ 
\begin{align*}
&\sqrt{n} \log(n) \min_{v \in V_1} \tau_v \geq \frac{1}{2} \varepsilon \log n \\
\text{ and } &\sqrt{n} \log(n) \max_{v \in V_2} \tau_v \leq  -\frac{1}{2}\varepsilon \log n
\end{align*}
with probability $1-o(1)$. 
The rest of the proof is identical to the final part of the argument in the proof of Theorem~\ref{thm:one-matrix-two-vec}~\ref{thm:one-matrix-two-vec-1}. 
\end{proof}

\section{More than two communities}
In this section, we will prove Theorem~\ref{thm:two-matrix-k-communities}. Similar to Lemma~\ref{edgeEigenvalue}, we need the following, whose proof is provided in Appendix~\ref{sec:eigenvalue-properties}.

 \begin{lemma} \label{edgeEigenvalue-general-k}
Let $\rho\in (0,1)^k$, and $P\in (0,1)^{k\times k}$ be a symmetric matrix. For any $y>0$, let $P^{\sss (y)}$ be the matrix such that $P^{\sss (y)}_{ij}=\rho_j(P_{ij}-y(1-P_{ij}))$ for all $i,j$. Then, either (1) $P^{\sss (y)}$ has a zero eigenvalue for all $y$ or (2) $P^{\sss (y)}$ has repeated eigenvalues for all $y$ or (3) there is a finite set $\cY$ such that $P^{\sss (y)}$  has distinct nonzero eigenvalues for all $y\not\in\cY$.

Consequently, if $P^{\sss (0)}:= P\cdot\diag(\rho)$ has $k$ distinct, non-zero eigenvalues, then~(3) holds.
\end{lemma}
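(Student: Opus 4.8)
\textbf{Proof proposal for Lemma~\ref{edgeEigenvalue-general-k}.}

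The plan is to treat the three alternatives as a dichotomy driven by a polynomial identity in $y$. Consider the characteristic polynomial $\chi_y(\lambda) = \det(\lambda I - P^{\sss (y)})$. Since each entry $P^{\sss (y)}_{ij} = \rho_j(P_{ij} - y(1-P_{ij}))$ is an affine function of $y$, the coefficients of $\chi_y(\lambda)$ (viewed as a polynomial in $\lambda$) are themselves polynomials in $y$. Two quantities detect the bad cases: the constant term $\chi_y(0) = \det(-P^{\sss (y)}) = (-1)^k \det(P^{\sss (y)})$, which vanishes exactly when $P^{\sss (y)}$ has a zero eigenvalue, and the discriminant $\mathrm{disc}_\lambda(\chi_y)$ (a polynomial in the coefficients, hence a polynomial in $y$), which vanishes exactly when $\chi_y$ has a repeated root, i.e.\ when $P^{\sss (y)}$ has a repeated eigenvalue. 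Call these $D_0(y) := \det(P^{\sss (y)})$ and $D_1(y) := \mathrm{disc}_\lambda(\chi_y)$; both are polynomials in $y$.

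First I would observe that a univariate polynomial is either identically zero or has finitely many roots. Apply this to $D_0$ and to $D_1$. If $D_0 \equiv 0$, then $P^{\sss (y)}$ has a zero eigenvalue for every $y$, which is alternative~(1). If $D_1 \equiv 0$, then $P^{\sss (y)}$ has a repeated eigenvalue for every $y$, which is alternative~(2). Otherwise, both $D_0$ and $D_1$ are nonzero polynomials, so each has only finitely many roots; let $\cY$ be the (finite) union of the root sets of $D_0$ and $D_1$. For every $y \notin \cY$ we have $D_0(y) \neq 0$ and $D_1(y) \neq 0$, so $P^{\sss (y)}$ has no zero eigenvalue and no repeated eigenvalue, i.e.\ it has $k$ distinct nonzero eigenvalues; this is alternative~(3). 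This establishes the trichotomy, with the three cases being mutually exclusive in the following sense: if neither (1) nor (2) holds then (3) must hold.

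For the consequence, suppose $P^{\sss (0)} = P \cdot \diag(\rho)$ has $k$ distinct nonzero eigenvalues. Then $D_0(0) = \det(P^{\sss (0)}) \neq 0$, so $D_0$ is not identically zero, ruling out alternative~(1); and $D_1(0) = \mathrm{disc}_\lambda(\chi_0) \neq 0$, so $D_1$ is not identically zero, ruling out alternative~(2). Hence alternative~(3) holds.

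The only genuinely delicate point is justifying that the discriminant of $\chi_y$ is a polynomial in $y$; this follows because the discriminant of a monic polynomial is a universal polynomial (with integer coefficients) in its coefficients, and those coefficients are polynomials in $y$ by the affine dependence of the matrix entries on $y$, so the composition is again a polynomial. Everything else is a routine invocation of ``a nonzero univariate polynomial has finitely many zeros,'' and the identification of a zero (respectively repeated) eigenvalue with the vanishing of the determinant (respectively discriminant). I expect no real obstacle; the main care is in phrasing the trichotomy so that it is clear alternatives (1) and (2) are not assumed mutually exclusive, but their simultaneous failure forces (3).
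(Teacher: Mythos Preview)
Your proof is correct and in fact cleaner than the paper's. Both arguments handle the zero-eigenvalue case identically, via the polynomial $\det(P^{\sss (y)})$ in $y$. For the repeated-eigenvalue case, however, the paper works in the ring $\mathbb{R}(y)[\lambda]$ of polynomials in $\lambda$ with rational-function coefficients in $y$: it computes the gcd of $\chi^{\sss (y)}$ and its $\lambda$-derivative there, and then has to argue separately (via continuity of eigenvalues) that the finitely many $y$ where the Bezout/cofactor denominators vanish do not create exceptions. Your use of the discriminant $\mathrm{disc}_\lambda(\chi_y)$ sidesteps this entirely, because the discriminant of a monic polynomial is a genuine polynomial (not merely rational) in its coefficients, hence a polynomial in $y$; no continuity patch-up is needed. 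The trade-off is that your argument presupposes the standard fact that the discriminant is a universal integer polynomial in the coefficients and vanishes exactly at repeated roots, whereas the paper's gcd route is more self-contained but longer.
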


Let us describe the algorithm that always succeeds up to the information theoretic threshold in the $k$-community case. {Again, the design of the algorithm is analogous to the single matrix case.}
\begin{breakablealgorithm}
\caption{Two-matrix community detection for general $k\geq 3$ communities}\label{alg:two-matrix-b}
\begin{algorithmic}[1]
\Require{Parameters $t>0$, $\rho \in (0,1)^k$ such that $\sum_i\rho_i = 1$, a symmetric matrix $P \in (0,1)^{k \times k}$, 
and also $G \sim \CSBM_n^k(\rho, P, t)$. }
\Ensure{Community classification $\hsig\in [k]^n$.}
\vspace{0.2cm}
\State Fix $y,\tilde{y}\notin \cY$ where $\cY$ is given by Lemma~\ref{edgeEigenvalue-general-k}. Construct two $n \times n$ matrices $A = A(G,y)$, $\tilde{A} = A(G,\tilde{y})$ as defined in Definition~\ref{def:signed-adj}.

\State Find the top $k$ eigenpairs of $A$ and $\tilde{A}$, respectively denoting them $((\lambda_l, u_l))_{l\in [k]}$ and $((\tilde{\lambda}_l, \tilde{u}_l))_{l \in [k]}$. Let $U$ (respectively $\tilde{U}$) be the $n \times k$ matrix whose $i$-th column is $u_i$ (respectively $\tilde{u}_i$). 
\State Use Algorithm \ref{alg:weights-3} on input $\left(t,\rho, P, y, \tilde{y}\right)$ to compute the weight vectors $\left(c_i, \tilde{c}_i \right)_{i\in [k]}$.
\State For $s \in \{\pm 1\}^{k}$, let $D^{\sss (s)}:= \diag(s)$. For any $s,\tilde{s}\in \{\pm 1\}^{k}$, construct the estimator 
\begin{eq}\label{estimator-k-com}
    &\hat{\sigma}(v; s, \tilde{s}) = \argmax_{i \in [k]} \Big\{\big(U  D^{\sss (s)}  c_i\big)_v + \big(\tilde{U} D^{\sss (\tilde{s})} \tilde{c}_i\big)_v\Big\} \\
    &\text{ for each }v\in [n].
\end{eq}
\State Return $\hsig = \argmax_{s, \tilde{s}\in \{\pm 1\}^k} \PR(G \mid \hat{\sigma}(\cdot; s, \tilde{s}))$. \label{step:find-signs}

\end{algorithmic}
\end{breakablealgorithm}

\begin{breakablealgorithm}
\caption{Find weights (Two matrices, $k \geq 3$ communities)}\label{alg:weights-3}
\begin{algorithmic}[1]
\Require{Parameters $t>0$, $\rho \in (0,1)^k$ such that $\sum_i\rho_i = 1$, a symmetric matrix $P \in (0,1)^{k \times k}$, and $y,\tilde{y}\notin \cY$ where $\cY$ is given by Lemma~\ref{edgeEigenvalue-general-k}.
}
\Ensure{Weight vectors $\left(c_i, \tilde{c}_i \right)_{i=1}^k \subset \R^k$.}
\vspace{0.2cm}
\State For $k\geq 1$, let $\cV_k:= \{i: n \sum_{j=0}^{k-1} \rho_j \leq i\leq  n \sum_{j=1}^{k} \rho_j \}$ with $\rho_0 =0$. 
Define $B$ to be the symmetric block matrix where $B_{uv} =  \frac{t \log n}{n}[P_{ij} - y(1-P_{ij})]$ if $u\in \cV_i$ and $v\in \cV_j$. Define $\tilde{B}$ similarly by replacing $y$ by $\tilde{y}$. Let the top $k$ eigenpairs of $B$ and $\tilde{B}$ be $((\gamma_i, v_i))_{i\in [k]}$, and $((\tilde{\gamma}_i, \tilde{v}_i))_{i\in [k]}$. 
Let $V$ (respectively $\tilde{V}$) be the $n \times k$ matrix whose $i$th column is $\nicefrac{v_i}{\gamma_i}$ (respectively $\nicefrac{\tilde{v}_i}{\tilde{\gamma}_i}$). 
\State Solve the following system for $\{\alpha_{ri}\}_{r,i \in [k]}$, $\{\tilde{\alpha}_{ri}\}_{r,i \in [k]}$:
\begin{eq}\label{z-star-gen-k}
    \alpha_{ri} + \tilde{\alpha}_{ri} &= \log \left(P_{ri} \right),\\
    -y\alpha_{ri} -\tilde{y} \tilde{\alpha}_{ri} &= \log \left(1-P_{ri} \right), \quad \forall r, i \in [k].
\end{eq}
For $i \in [k]$, 
let $z_i$ (respectively $\tilde{z}_i$) be the block vector with $z_{iv} = \alpha_{ri}$ (respectively $\tilde{z}_{iv} = \tilde{\alpha}_{ri}$) when $v\in \cV_r$. 
    \State Return $\left(c_i, \tilde{c}_i\right)_{i=1}^k$ solving 
    \begin{eq} \label{z-star-gen-k-solution}
        &\sqrt{n} \log(n) V c_i = z_i \\
        \text{and} \quad &\sqrt{n} \log(n) \tilde{V} \tilde{c}_i = \tilde{z}_i \quad \text{for all }i\in [k].
    \end{eq}

\end{algorithmic}
\end{breakablealgorithm}

\begin{proof}[Proof of Theorem \ref{thm:two-matrix-k-communities}]
The argument is identical to the proof of Theorem~\ref{thm:two-matrix-two-vec}. 
We skip redoing all the details for general $k\geq 3$ and instead give an overview of the steps.

Indeed, since $P \cdot \diag(\rho)$ has $k$ distinct, non-zero eigenvalues by our assumption, Lemma~\ref{edgeEigenvalue-general-k} implies that the eigenvalues of $\E[A(G,y)]$ are also distinct for sufficiently large $n$. 
Applying the entrywise bounds for the eigenvectors in Corollary~\ref{corollary:entrywise},
holds for general $k$. 
{The parameters \eqref{z-star-gen-k} and \eqref{z-star-gen-k-solution} are chosen in such a way so that for some $s, \tilde{s} \in \{\pm 1\}^k$, for each community $i \in [k]$, the associated approximating vector $\tau^{(i)}$ satisfies
\begin{align*}
&\sqrt{n} \log(n) \left(UD^{(s)}c_i + \tilde{U}D^{(\tilde{s})}\tilde{c}_i\right)_v =: \sqrt{n} \log(n) \tau^{(i)}_v \\
&= \big(\log(P_{ri}), \log(1-P_{ri}) \big)_{r\in [k]}\cdot d(v) + o(\log n)   
\end{align*}
with probability $1-o(1)$, for all $v\in [n]$.}
The estimator described by \eqref{estimator-k-com} is constructed so that for some $s, \tilde{s} \in \{\pm 1\}^k$, we have
\[\hat{\sigma}(v; s, \tilde{s}) = \argmax_{i \in [k]} \tau_v^{\sss {(i)}}.\] 
Corollary~\ref{corollary:max} implies that for this pair $(s, \tilde{s})$, we have $\hat{\sigma}(v; s, \tilde{s}) = \sigma_0(v)$ for all $v$ with high probability. Finally, the correct pair $s, \tilde{s}$ is chosen in Step \ref{step:find-signs}, by again appealing to statistical achievability (Theorem \ref{theorem:achievability-general}).
\end{proof}

\begin{remark}
 We can simplify the algorithms by taking $A$ and $\tilde{A}$ without any ternary encoding if both $P\cdot \diag(\rho)$ and $(J-P)\cdot \diag(\rho)$ have $k$ distinct, non-zero eigenvalues.
Indeed, define $A,\tilde{A}$ 
\begin{align*}
    &A_{ij} = 
    \begin{cases}
    1 & \quad \text{if }\{i,j\}\text{ is present}\\
    0 & \quad \text{if }\{i,j\}\text{ is absent or censored}
    \end{cases}\\ 
    \text{and}\quad& 
    \tilde{A}_{ij} = 
    \begin{cases}
    1 & \quad \text{if }\{i,j\}\text{ is absent}\\
    0 & \quad \text{if }\{i,j\}\text{ is present or censored.}
    \end{cases}
\end{align*}
We can simply set $\alpha_{ri} = \log(P_{ri})$ and $\tilde{\alpha}_{ri} = \log(1-P_{ri})$, and choose $c_i,\tilde{c}_i$ according to \eqref{z-star-gen-k-solution}. With this choice, the estimator in \eqref{estimator-k-com} (optimized over the signs as in Algorithm~\ref{alg:two-matrix-b}~Step 5) achieves exact recovery up to the information theoretic threshold.

Of course, such a simplification might not be possible for many possible choices of parameters. 
For example, in the two community case, we can take $\rho = 1/2$, and $p_1,p_2,q$ such that $p_1p_2- q^2 \neq 0$ but $(1-p_1)(1-p_2)- (1-q)^2 = 0$. 
One such choice is $p_1 = \frac{23}{25}, p_2 = \frac{17}{25}, q= \frac{3}{5}$.  
\end{remark}


%

\bibliography{references}

\begin{thebibliography}{10}

\bibitem{Abbe2020}
E.~Abbe, J.~Fan, K.~Wang, and Y.~Zhong.
\newblock Entrywise eigenvector analysis of random matrices with low expected
  rank.
\newblock {\em Annals of Statistics}, 48(3):1452--1474, 2020.

\bibitem{Abbe2015}
E.~Abbe and C.~Sandon.
\newblock Community detection in general stochastic block models: Fundamental
  limits and efficient algorithms for recovery.
\newblock In {\em 2015 IEEE 56th Annual Symposium on Foundations of Computer
  Science}, pages 670--688. IEEE, 2015.

\bibitem{AK97}
N.~Alon and N.~Kahale.
\newblock A spectral technique for coloring random 3-colorable graphs.
\newblock {\em SIAM Journal on Computing}, 26(6):1733--1748, 1997.

\bibitem{AKS98}
N.~Alon, M.~Krivelevich, and B.~Sudakov.
\newblock Finding a large hidden clique in a random graph.
\newblock {\em Random Structures \& Algorithms}, 13(3-4):457--466, 1998.

\bibitem{BS95}
A.~Blum and J.~Spencer.
\newblock Coloring random and semi-random k-colorable graphs.
\newblock {\em Journal of Algorithms}, 19(2):204--234, 1995.

\bibitem{Bop87}
R.~B. Boppana.
\newblock Eigenvalues and graph bisection: An average-case analysis.
\newblock In {\em 28th Annual Symposium on Foundations of Computer Science
  ({SFCS} 1987)}, pages 280--285, 1987.

\bibitem{CO16}
A.~Coja-Oghlan.
\newblock A spectral heuristic for bisecting random graphs.
\newblock {\em Random Structures \& Algorithms}, 29(3):351--398, 2006.

\bibitem{Dhara2022b}
S.~Dhara, J.~Gaudio, E.~Mossel, and C.~Sandon.
\newblock Spectral algorithms optimally recover (censored) planted dense
  subgraphs.
\newblock {\em arXiv preprint arXiv:2203.11847}, 2022.

\bibitem{Dhara2022a}
S.~Dhara, J.~Gaudio, E.~Mossel, and C.~Sandon.
\newblock Spectral recovery of binary censored block models.
\newblock {\em Proceedings of the 2022 Annual ACM-SIAM Symposium on Discrete
  Algorithms (SODA)}, 2022.

\bibitem{Gross2011}
D.~Gross.
\newblock Recovering low-rank matrices from few coefficients in any basis.
\newblock {\em IEEE Transactions on Information Theory}, 57(3):1548--1566,
  2011.

\bibitem{Hajek2016}
B.~Hajek, Y.~Wu, and J.~Xu.
\newblock Achieving exact cluster recovery threshold via semidefinite
  programming.
\newblock {\em IEEE Transactions on Information Theory}, 62(5):2788--2797,
  2016.

\bibitem{Heimlicher2012}
S.~Heimlicher, M.~Lelarge, and L.~Massouli{\'e}.
\newblock Community detection in the labelled stochastic block model.
\newblock {\em arXiv preprint arXiv:1209.2910}, 2012.

\bibitem{JLR00}
S.~Janson, T.~Luczak, and A.~Rucinski.
\newblock {\em Random Graphs}.
\newblock John Wiley \& Sons, 2000.

\bibitem{Krivelevich2006}
M.~Krivelevich and B.~Sudakov.
\newblock Pseudo-random graphs.
\newblock In {\em More Sets, Graphs and Numbers}, pages 199--262. Springer,
  2006.

\bibitem{Ryu2016}
N.~Ryu and S.-Y. Chung.
\newblock Community detection with colored edges.
\newblock In {\em 2016 IEEE International Symposium on Information Theory
  (ISIT)}, pages 1899--1903. IEEE, 2016.

\bibitem{Schudy2012}
W.~Schudy and M.~Sviridenko.
\newblock Concentration and moment inequalities for polynomials of independent
  random variables.
\newblock In {\em Proceedings of the Twenty-Third Annual ACM-SIAM Symposium on
  Discrete Algorithms}, pages 437--446. SIAM, 2012.

\bibitem{Vu18}
V.~Vu.
\newblock A simple {SVD} algorithm for finding hidden partitions.
\newblock {\em Combinatorics, Probability and Computing}, 27(1):124–140,
  2018.

\bibitem{YP14}
S.-Y. Yun and A.~Proutiere.
\newblock Accurate community detection in the stochastic block model via
  spectral algorithms.
\newblock {\em arXiv preprint arXiv:1412.7335}, 2014.

\bibitem{YP16}
S.-Y. Yun and A.~Proutiere.
\newblock Optimal cluster recovery in the labeled stochastic block model.
\newblock In {\em Proceedings of the 30th International Conference on Neural
  Information Processing Systems}, page 973–981, 2016.

\end{thebibliography}
\bibliographystyle{abbrv}

\appendix

\section{Proof of Poisson approximation}
\label{sec:appendix-poisson}

\begin{proof}\emph{(Proof of Lemma \ref{fact:stirling}).}
Observe that $(D_{a,r},D_{b,r})$ are independent over $r$ as they depend on disjoint sets of independent random variables. Thus,
\begin{eq}\label{exact-deg=prob}
&\PR(D = d) = \prod_{r\in [n] \setminus \{i\} } \frac{|S_r|!}{d_{1,r}!d_{2,r}!(|S_r| -d_{1,r}-d_{2,r})!}\\
& \quad \times (\alpha \psi_r)^{d_{1,r}}  (\alpha (1-\psi_r))^{d_{2,r}} (1-\alpha)^{|S_r| -d_{1,r}-d_{2,r}} \\
& \quad \times  \frac{(|S_i| - |V|)!}{d_{1,i}!d_{2,i}!(|S_i| - |V| -d_{1,i}-d_{2,i})!}\\
& \quad \times (\alpha \psi_i)^{d_{1,i}}  (\alpha (1-\psi_i))^{d_{2,i}} (1-\alpha)^{|S_i| - |V| -d_{1,i}-d_{2,i}}. 
\end{eq}
We use Stirling's approximation and the fact that $1-\e^{-x} \asymp x$ as $x\to 0$. 
Thus, using the assumptions on $d$, for each $r\neq i$, we have 
\begin{align*}
    &\frac{|S_r|!}{(|S_r|-d_{1,r} - d_{2,r})!} \\
    &\asymp \frac{\e^{-|S_r|} |S_r|^{|S_r|+\frac{1}{2}}}{\e^{-|S_r|+d_{1,r}+d_{2,r}} (|S_r|-d_{1,r}-d_{2,r})^{|S_r|-d_{1,r}-d_{2,r}+\frac{1}{2}}} \\
    &= \e^{-d_{1,r}-d_{2,r}} \frac{(|S_r|-d_{1,r}-d_{2,r})^{d_{1,r}+d_{2,r}}}{(1-\frac{d_{1,r}+d_{2,r}}{|S_r|})^{|S_r|+\frac{1}{2}}}\\
    &\asymp  \frac{(|S_r|-d_{1,r}-d_{2,r})^{d_{1,r}+d_{2,r}}}{(1-\frac{d_{1,r}+d_{2,r}}{|S_r|})^{\frac{1}{2}}}\\
    &\asymp \big(n\rho_r\big(1+O(\log ^{-2}n )\big) \big) ^{d_{1,r}+d_{2,r}} 
    \asymp (n\rho_r)^{d_{1,r}+d_{2,r}}, 
\end{align*}
where in the last step we have used 
$d_{1,r},d_{2,r} = o(\log ^{3/2} n)$.
Also, 
\[(1-\alpha)^{|S_r|-d_{1,r} - d_{2,r}} \asymp \e^{-\alpha(|S_r|-d_{1,r} - d_{2,r})} \asymp \e^{-t\rho_r\log(n)/2}. \]
Thus, the $r$-th product term in \eqref{exact-deg=prob} is asymptotically equal to 
\begin{align*}
    \e^{-t\rho_r\log n} \frac{(\rho_r\psi_r t\log n)^{d_{1,r}} (\rho_r(1-\psi_r) t\log n)^{d_{2,r}}}{d_{1,r}!d_{2,r}!}.
\end{align*}
The identical approximation holds for $r = i$ as well using the fact that $|S_i\setminus V| = n \rho_i (1+O(\log^{-2} n ))$. Thus the proof follows from \eqref{exact-deg=prob}. 
\end{proof}

\section{Proof of Lemma \ref{lemma:Schudy-Sviridenko}}\label{sec:appendix-sum-concentration}
We first state a special case of \cite[Theorem 1.3]{Schudy2012}.
\begin{lemma}\label{lemma:SS-special-case}
Let $X_1, \dots, X_n$ be independent random variables, and let $X = \sum_{i=1}^n X_i$. Let $L > 0$. Suppose that for each $i \in [n]$ and $k \in \mathbb{N}$,
\begin{equation}
\mathbb{E}\left[|X_i|^k\right] \leq k \cdot L \cdot \mathbb{E}\left[|X_i|^{k-1} \right]. \label{eq:L-bounded}
\end{equation}
Let $\mu_0 = \sum_{i=1}^n \mathbb{E}[|X_i|]$. Then, for any $\lambda>0$,
\begin{eq}
&\mathbb{P}\left(\left|X - \mathbb{E}[X] \right| \geq \lambda \right) \\
&\leq \e^2 \max\left\{\exp \left(-\frac{\lambda^2}{\mu_0 L R} \right), \exp \left(-\frac{\lambda}{L R} \right) \right\},\label{eq:SS-special-case}
\end{eq}
where $R$ is an absolute constant. 
\end{lemma}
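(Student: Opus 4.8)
The plan is to obtain Lemma~\ref{lemma:SS-special-case} as the degree-one instance of the polynomial concentration inequality \cite[Theorem 1.3]{Schudy2012}. That theorem bounds the tails of a polynomial $f(X_1,\dots,X_n)$ of degree $q$ evaluated at independent random variables that are \emph{moment-bounded with parameter $L$}, by which is meant exactly that $\mathbb{E}[|X_i|^k]\le k L\,\mathbb{E}[|X_i|^{k-1}]$ for all $k\in\mathbb{N}$; this is precisely hypothesis \eqref{eq:L-bounded}. Its conclusion is a tail bound of the form
\[
\mathbb{P}\big(|f-\mathbb{E}[f]|\ge \lambda\big)\le \e^{2}\max_{r\in\{0,1,\dots,q\}}\exp\Big(-\big(\lambda^{2} / (\mu_0\,\mu_r\,R^{q})\big)^{1/\max\{r,1\}}\Big),
\]
for an absolute constant $R$, where $\mu_0,\dots,\mu_q$ are quantities assembled from the expected absolute values of the partial derivatives of $f$, each scaled by an appropriate power of $L$.

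First I would apply this with $q=1$ and the linear polynomial $f(x_1,\dots,x_n)=\sum_{i=1}^n x_i$, so that $f(X_1,\dots,X_n)=X$. Here every first-order partial derivative equals $1$ and there are no higher-order terms, so the bookkeeping quantities collapse: the top-degree quantity reduces to a constant multiple of $L$ (one factor of $L$ times the coefficient bound $\max_i|a_i|=1$), and the zeroth-order quantity reduces to $L\sum_{i=1}^n\mathbb{E}[|X_i|]$, i.e.\ a constant multiple of $L\mu_0$ in the notation of the lemma. (I would rename one of the two conflicting $\mu_0$'s to avoid the clash.) With $q=1$ the maximum over $r$ has essentially two surviving terms, a ``variance'' term scaling like $\exp(-\lambda^{2}/(\mu_0 L R))$ and a ``max'' term scaling like $\exp(-\lambda/(L R))$; unfolding the exponents and splitting into the regimes $\lambda\le\mu_0$ and $\lambda>\mu_0$ produces exactly the two-term maximum in \eqref{eq:SS-special-case} after absorbing the various absolute constants into $R$.

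The only real work is the second step: translating the general and somewhat heavy notation of \cite{Schudy2012} — stated for $q$-linear tensors with several nested maxima — into this one-line linear setting, and verifying that their moment-bounded hypothesis is literally \eqref{eq:L-bounded}. I expect this to be the main obstacle, though not a deep one: it is a matter of quoting the external definitions correctly and confirming that no degree-$\ge 2$ contributions appear. Once this dictionary is fixed, all of the probabilistic content is contained in \cite[Theorem 1.3]{Schudy2012}, and no further estimates are required; the write-up can be a single short paragraph that records the specialization, evaluates the auxiliary constants, and reads off \eqref{eq:SS-special-case}.
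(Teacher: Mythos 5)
Your proposal takes exactly the route the paper does: the paper presents this lemma as a direct specialization of \cite[Theorem 1.3]{Schudy2012} to the degree-one polynomial $f(x_1,\dots,x_n)=\sum_{i=1}^n x_i$, with the moment-bounded hypothesis being precisely \eqref{eq:L-bounded}, and offers no further argument beyond that citation. The bookkeeping you describe (collapsing the $\mu_r$ quantities for $q=1$ and absorbing constants into $R$) is just the translation the paper leaves implicit, so your write-up is correct and essentially identical in approach.
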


\begin{proof}[Proof of Lemma \ref{lemma:Schudy-Sviridenko}]
We first verify \eqref{eq:L-bounded}, for $L = \max \{|x| : x \in S\}$. For $k \in \mathbb{N}$,
\begin{align*}
\mathbb{E}\left[|X_i|^k \right] &= \sum_{x \in S} |x|^k \mathbb{P}(X_i = x)\\ &\leq L\sum_{x \in S} |x|^{k-1} \mathbb{P}(X_i = x)\\
&= L \mathbb{E}\left[|X_i|^{k-1} \right]\\
&\leq kL \mathbb{E}\left[|X_i|^{k-1} \right].
\end{align*}
We apply Lemma \ref{lemma:SS-special-case}, noting that $\mu_0 = \mathbb{E}[Y]$. Set $\lambda = \delta |\mathbb{E}[X]|$. Then $\lambda < \mu_0$, so that the first bound in \eqref{eq:SS-special-case} applies, giving the claim.
\end{proof}

\section{Proof of eigenvalue properties}\label{sec:eigenvalue-properties}
\begin{proof}[Proof of Lemma \ref{edgeEigenvalue}]
The only $2\times 2$ matrices whose eigenvalues are not distinct are the multiples of the identity matrix. 
Indeed, 
\begin{align*}
    \det \begin{pmatrix}
        a&b\\
        b&c
    \end{pmatrix} = 0 
    \implies \lambda^2 - (a+c)\lambda + ac - b^2= 0,
\end{align*}
which has same roots in $\lambda$ if and only if $(a-c)^2+ b^2 = 0$. Therefore, $A'$ in \eqref{defn:2-2-matrix} has identical eigenvalues if and only if $y = q/(1-q)$. 

Next, note that $\det(A'(y))$ is a quadratic function in $y$, which has at most two roots unless $\det(A'(y))$ is the zero polynomial. 
To rule out the latter possibility, note that 
\begin{align*}
&\det\bigg(A'\Big(\frac{q}{1-q}\Big)\bigg) \\
&= \rho (1-\rho) \Big(p_1-\frac{q}{1-q}(1-p_1)\Big)\Big(p_2-\frac{q}{1-q}(1-p_2)\Big),    
\end{align*}
which is nonzero if $p_1$ and $p_2$ are both different from $q$. 
When $p_2 = q$, then $\det(A'(0)) = \rho (1-\rho) (p_1q - q^2)$, which cannot be zero due to our assumption that $0<p_1.p_2,q<1$ cannot be all be the same. 
Hence, $\det(A'(y))$ cannot be a zero polynomial, and thus there are at most two values of $y$ such that $\det(A'(y)) = 0$. Combined with the condition for having distinct roots,
there is a set $\cY$ with $|\cY| \leq 3$ such that, for $y\notin \cY$, $A'(y)$ has two distinct and nonzero eigenvalues. This proves Lemma~\ref{edgeEigenvalue}~Part~\eqref{edgeEigenvalue-1}.

To prove Lemma~\ref{edgeEigenvalue}~Parts~\eqref{edgeEigenvalue-2},\eqref{edgeEigenvalue-3}, consider the case where \eqref{spec-one-success-regime} holds. 
Also, take $y=y(p,q)=\log\big(\frac{1-q}{1-p}\big)/\log \big(\frac{p}{q}\big)$. Due to symmetry of $y(p,q)$, \eqref{eq:y-inequality-1} \text{and} \eqref{eq:y-inequality-2} imply 
\begin{align*}
&y(p,q) > \frac{q}{1-q} \text{ if }p>q \\
\text{and} \quad &y(p,q)< \frac{q}{1-q} \text{ if }p<q    
\end{align*}
Also, $y(p,q)>0$ for all $0<p,q<1$. Thus, if  \eqref{spec-one-success-regime}, then $y(p,q) \ne\frac{q}{1-q}$. 
Hence, the off diagonal entries of $A'(y(p,q))$ are nonzero and the eigenvalues of $A'(y(p,q))$ are distinct. 
If $p_2=q$ then $\det(A'(y(p,q)))= \rho(1-\rho)(p-q)(1+y)(q-y(1-q))\ne 0$, so its eigenvalues are both nonzero. This proves Part~\eqref{edgeEigenvalue-2}. 

To prove Part~\eqref{edgeEigenvalue-3}, take $p_1= p_2 = p$. Note that  $\det(A'(y)) = 0$ is $0$ if and only if $|p-y(1-p)|=|q-y(1-q)|$.
Since $p\ne q$, the latter holds if and only if  $y=\frac{p+q}{2-p-q}$ or $p+q = 1$. 
Let $x=(p+q)/2$ and $\epsilon=(p-q)/2$, and observe that
\begin{align*}
&y(p,q)=\ln\Big(\frac{1-q}{1-p_1}\Big)/\ln\Big(\frac{p}{q}\Big)\\
&=\ln\left(\frac{1-x+\epsilon}{1-x-\epsilon}\right)/\ln\left(\frac{x+\epsilon}{x-\epsilon}\right)\\
&=\frac{\ln(1+\epsilon/(1-x))-\ln(1-\epsilon/(1-x))}{\ln(1+\epsilon/x)-\ln(1-\epsilon/x)}\\
&=\frac{\sum_{i=0}^\infty 2(\epsilon/(1-x))^{2i+1}/(2i+1)}{\sum_{i=0}^\infty 2(\epsilon/x)^{2i+1}/(2i+1)}\\
&=\frac{x}{1-x}\cdot \frac{\sum_{i=0}^\infty (\epsilon/(1-x))^{2i}/(2i+1)}{\sum_{i=0}^\infty (\epsilon/x)^{2i}/(2i+1)}. 
\end{align*}
The ratio of infinite sums is greater than $1$ if $x<1/2$ and less than $1$ if $x>1/2$, so $y=\frac{p+q}{2-p-q}\iff 2x = p+q=1$. This proves Part~\eqref{edgeEigenvalue-3}.
\end{proof}

\begin{proof}[Proof of Lemma~\ref{edgeEigenvalue-general-k}]
First, observe that $P^{\sss (y)}$  has a zero eigenvalue if and only if its determinant is zero. Since $\det(P^{\sss (y)})$ is a polynomial in $y$ of degree at most $k$, either it is identically zero (i.e., (1) holds) or there exists a subset $\cY_1\subset \R$ with $|\cY_1| \leq k$ such that $\det(P^{\sss (y)})\neq 0$ for all $y\notin \cY_1$. 

Next, observe that for any given $y$, the eigenvalues of $P^{\sss (y)}$ are the roots of the characteristic polynomial $\chi^{\sss (y)}(\lambda):=\det(P^{(y)}-\lambda I)$.
Let $f^{\sss (y)}(\lambda)$ be the polynomial with leading coefficient $1$ in $\lambda$ that is the greatest common divisor of $\chi^{\sss (y)}(\lambda)$ and $(\chi^{\sss (y)})'(\lambda) =  \frac{\dif}{\dif \lambda}\chi^{\sss (y)}(\lambda)$. 
Then $\chi^{\sss (y)}(\lambda)$ has repeated roots in $\lambda$ if and only if $f^{\sss (y)} (\lambda)$ is not a constant function in $\lambda$. 
Now, consider $\chi^{\sss (y)}(\lambda)$ and $\frac{\dif}{\dif \lambda}\chi^{\sss (y)}(\lambda)$ as elements of $\mathbb{R}^{\sss(y)}[\lambda]$, the ring of polynomials in $\lambda$ with coefficients that are rational functions of $y$. 
Then, there exist $f^{\star \sss (y)}, g_1^{\sss (y)}, g^{\sss (y)}_2, h_1^{\sss (y)},h_2^{\sss (y)} \in \mathbb{R}^{\sss (y)}[\lambda]$ such that the leading coefficient of $f^{\star \sss (y)}$ is $1$, and
\begin{align*}
    &f^{\star \sss (y)}=g_1^{\sss (y)}\chi^{\sss (y)} + g_2^{\sss (y)} (\chi^{\sss (y)} )', \\
    &\chi^{\sss (y)} = h_1^{\sss (y)} f^{\star \sss (y)}, \quad (\chi^{\sss (y)})' = h_2^{\sss (y)} f^{\star \sss (y)}.
\end{align*}
Thus, for any $y$, $f^{\star \sss (y)}$ will evaluate to $f^{\sss (y)}$, unless the denominator of at least one coefficient of $f^{\star \sss (y)}$, $g_1^{\sss (y)}$, $g_2^{\sss (y)}$, $h_1^{\sss (y)}$, or $h_2^{\sss (y)}$ evaluates to $0$. 
Since the coefficients are rational functions in $y$, this can only happen for $y\in \cY_2$, where $\cY_2$ is a finite set. Therefore, if $f^{\star \sss (y)}(\lambda)$ is a constant in $\lambda$, then for all $y\notin \cY_2$, the eigenvalues of $P^{\sss (y)}$ are all distinct. Taking $\cY = \cY_1\cup \cY_2$, we have shown the (3) holds in this case.

Next, suppose that $f^{\star \sss (y)}(\lambda)$ is not  constant in $\lambda$. Then, $P^{\sss (y)}$ must have a repeated eigenvalue for all $y \notin \cY_2$. The eigenvalues of $P^{\sss (y)}$ change continuously as functions of $y$. Thus, if there was any $y$ for which its eigenvalues were all distinct they would have to be distinct for all values of $y$ sufficiently close to that one. Therefore, $P^{\sss (y)}$ must have repeated eigenvalues for all values of $y$ and (2) holds in this case. This completes the proof of Lemma~\ref{edgeEigenvalue-general-k}.
\end{proof}

\end{document}